\renewcommand*{\backref}[1]{}
\renewcommand*{\backrefalt}[4]{[{\tiny%
    \ifcase #1 Not cited.%
          \or Cited on page~#2.%
          \else Cited on pages #2.%
    \fi%
    }]}
\theoremstyle{definition}
 \newtheorem{theorem}{Theorem}
 \newtheorem{proposition}[equation]{Proposition}
 \newtheorem{definition}[equation]{Definition}
 \newtheorem{remark}[equation]{Remark}
 \newtheorem{lemma}[equation]{Lemma}
 \newtheorem{corollary}[equation]{Corollary}
 \newtheorem{example}[equation]{Example}
\newtheorem{question}[equation]{Question}
\let\origref\ref
\renewcommand*\ref[1]{%
  \origref{#1}\xlabel{#1}}
\newrobustcmd*\xlabel[1]{%
   \ifcsdef{siteref@doc@#1}{}{\csgdef{siteref@doc@#1}{,}}%
    \@bsphack%
    \begingroup
       \csxdef{siteref@doc@#1}{\csuse{siteref@doc@#1},\thepage}%
         \protected@write\@auxout{}%
        {\string\SiteRef{siteref@#1}{\csuse{siteref@doc@#1}}}%
     \endgroup
     \@esphack%
}
\newrobustcmd*\SiteRef[2]{\csgdef{#1}{#2}}
\newrobustcmd*\xref[1]{%
\ifcsundef{siteref@#1}{%
     \@latex@warning@no@line{Label `#1' not defined}
     }{%
    \begingroup
      \StrGobbleLeft{\csuse{siteref@#1}}{2}[\@tempa]\relax%
      \def\@tempb{}%
      \@tempcnta=0\relax%
      \@tempcntb=\@ne\relax%
      \def\do##1{\advance\@tempcnta\@ne}%
      \expandafter\docsvlist\expandafter{\@tempa}%
       \def\do##1{%
         \ifnum\@tempcntb=\@tempcnta\relax%
            \hyperpage{##1}%
         \else
            \hyperpage{##1},%
          \fi%
          \advance\@tempcntb\@ne
       }%
       [\expandafter\docsvlist\expandafter{\@tempa}]\xspace%
    \endgroup
   }%
}
\newcommand{\m}[1]{}
\newcommand{\used}[1]{(\text{Used\  on\  pages\  \xref{#1}\!\!){\ }}}
\newcommand{\laber}[1]{\label{#1} \used{#1}}
\newcommand\RR{\mathbb R}
\newcommand\QQ{\mathbb Q}
\newcommand\ZZ{\mathbb Z}
\newcommand\NN{\mathbb N}
\newcommand\A{\mathcal A}
\newcommand\B{\mathcal B}
\newcommand\e{\varepsilon}
\newcommand{\supp}{\mathrm{supp}}
\newcommand{\Add}{\mathrm{Add}}
\newcommand{\vv}{\bf{verified}}
\newcommand{\p}{{\bf p}}
\newcommand{\q}{{\bf q}}
\newcommand{\h}{\text{\small h}}
\newcommand{\C}{\text{C}}
\numberwithin{equation}{section}
\begin{document}

\date{\today}
\title{Introduction to tropical series and wave dynamic on them}

\author[N. Kalinin]{Nikita Kalinin}
\address{National Research University Higher School of Economics, Soyuza Pechatnikov str., 16, St. Petersburg, Russian Federation}

\author[M. Shkolnikov]{Mikhail Shkolnikov}
\address{Universit\'e de Gen\`eve, Section de
  Math\'ematiques, Route de Drize 7, Villa Battelle, 1227 Carouge, Switzerland}
%\thanks{
%Research is supported in part by the project   
%}

%\address{National Research University Higher School of Economics, Soyza Pechatnikov str., 16, St. Petersburg, Russian Federation,
%Mathematics Department 
%Centro de Investigaci\'on y de Estudios 
%Avanzados del Instituto Polit\'ecnico Nacional, 
%Av. Instituto Polit\'ecnico Nacional, 2508,
%07360, 
%Col. San Pedro Zacatenco, M\'exico, Mexico} 

\keywords{Tropical curves, tropical dynamics, tropical series}
\subjclass{14T05,11S82,37E15,37P50}
\begin{abstract}
The theory of tropical series, that we develop here, firstly appeared in the study of the growth of pluriharmonic functions. Motivated by waves in sandpile models we introduce a dynamic on the set of tropical series, and it is experimentally observed that this dynamic obeys a power law. So, this paper serves as a compilation of results we need for other articles and also introduces several objects interesting by themselves.
\end{abstract}
\maketitle

\subsection{Motivation and history}
The purpose of this article is two-fold: we develop the theory of a dynamic on tropical series on two-dimensional domains and prove ancillary statements for our works about sandpile dynamic (see \cite{us}, \cite{announce} for details), non-commutative toric varieties (see \cite{mishathesis} for a sketch), and number $\pi$ (see \cite{pi_short}). Our initial motivation was \cite{firstsand} where it was experimentally observed that tropical curves appear in sandpile models and behave nicely when we add more sand. We think that this text has an independent interest, so we separated it from \cite{us} to make it more accessible for a general audience. 

We experimentally observed, \cite{sandcomputation}, that the dynamic on the space of tropical series, proposed here, obeys power law. Namely, the distribution of the area of an avalanche (a direct analog of that for sandpiles) in this model has the density function of the type $p(x) = cx^{\alpha}$. To the best of our knowledge, this simple geometric dynamic is the only model, among the ways to obtain power laws in a simulation, which produces a continuous random variable.

Tropical series appeared in the works of Christer Oscar Kiselman, \cite{MR743626}, while studying the growth of plurisubharmonic functions, see also \cite{kiselman2014questions}, Section 5, and \cite{Abakumov2017}. Tropical series in one variable can be studied in the context of ultradiscretization of differential equations, see \cite{tohge2014order} and references therein. See also \cite{MR2482129},\cite{MR2795727},\cite{korhonen2015tropical} for tropical Nevalinna theory. One-dimensional tropical series are used in automata-theory: \cite{lahaye2015compositions}, \cite{lombardy2006sequential}.

For a general introduction to tropical geometry, see \cite{BIMS}, \cite{mikh2}, or \cite{Brug}. It seems that the results of this paper can be extended to higher dimensions, but the proofs will be technically more involved.

\subsection{Plan of this paper, main objects.}
A tropical series on a closed convex domain $\Omega\subset\RR^2$ is just a function which is locally a minimum of a finite number of functions $ix+jy+a_{ij}$ where $i,j\in\ZZ, a_{ij}\in \RR$. For the purpose of \cite{us} we need to consider tropical series which are non-negative. Clearly, non-negative tropical series on $\RR^2$ are just constants, so we restrict our attention to {\it admissible} $\Omega$'s, see Definition~\ref{def_omegaadmissible}. We study general properties of tropical series in Sections~\ref{sec_series},~\ref{sec_omegaseries}. Somewhat the main tropical series associated to $\Omega$ is the {\it weighted distance function}, see Section~\ref{sec_distance}. 

In Section~\ref{sec_gp} we define the main character of this paper, the ``wave'' operator $G_\p$, where $\p\in\Omega^\circ$ and $G_\p$ acts on tropical series, and study its properties. The word ``wave'' stands for the fact that $G_\p$ is the scaling limit incarnation of sending waves from $\p$ in a sandpile model, see \cite{us} for details. Section~\ref{sec_dynamic} is devoted to the dynamic generated by applying operators $G_\p$ for different points. In Section~\ref{sec_lift} we show how to lift $G_\p$ to an operator on Laurent polynomials over a field of characteristic two. Its algebraic meaning is yet to be discovered.

In Sections~\ref{sec_qpolygons},~\ref{sec_exhausting} we show how to approximate $\Omega$ by a somewhat canonical family of $\QQ$-polygons, i.e. (possibly non-compact) polygons with finite number of sides of rational slopes.

Sections~\ref{sec_nice},~\ref{sec_coarse} further reduce the study of dynamic to the case of so called {\it nice} tropical series, which behave well near the boundary of $\Omega$. Proposition~\ref{prop_nice} tells that in order to approximate $G=G_{\p_1}G_{\p_2}\dots 0_\Omega$, by so-called blow-ups we can restrict the dynamic to a $\QQ$-polygon close to $\Omega$. Proposition~\ref{prop_smooth} asserts that by changing $G$ just a bit we may assume that all tropical curves are smooth (Definition~\ref{def_smoothvertex}) during this dynamic. We summarize these results in Section~\ref{sec_summary}.

The tropical curve $C(G)$ for the above function $G$ has the minimal {\it tropical symplectic area} (defined in \cite{tony}) among the curves passing through the points $\p_1,\dots,\p_n$, see Section~\ref{sec_area} where we also explain the name of this notion. 

Smooth tropical curves corresponding to nice tropical series are main objects in the proofs in \cite{us}, \cite{announce}. Using results of this paper we will reduce the theorems in \cite{us} to the local questions which can be addressed purely combinatorially with help of super-harmonic functions \cite{us_solitons}. 

\subsection{Acknowledgments}

We thank Andrea Sportiello for sharing his insights on perturbative
regimes of the Abelian sandpile model which was the starting
point of our work on sandpiles. Our proofs required developing the theory of tropical series, presented here.

The first author, Nikita Kalinin, is funded by SNCF PostDoc.Mobility grant 168647. %, and also wants to thank the Grant FORDECYT-265667 ``Programa para un avance global e integrado de la matem\'atica mexicana''. 
Support from the Basic Research Program of the National Research University Higher School of Economics is gratefully acknowledged.
 

The second author, Mikhail Shkolnikov, is supported in part by the grant 159240 of the Swiss National Science
Foundation as well as by the National Center of Competence in Research
SwissMAP of the Swiss National Science Foundation. 

\section{Tropical series}
\label{sec_series}
Recall that a tropical Laurent polynomial (later just {\it tropical polynomial})  $f$ on $U\subset\RR^2$ in two variables is a function  $f: U\to \RR$ which can be written as \begin{equation}\label{eq_Ftroppoli}
f(x,y)=\min_{(i,j)\in \A} (ix+jy+a_{ij}), a_{ij}\in\RR
\end{equation} where $\A$ is
a {\bf finite} subset of $\ZZ^2.$ Each point $(i,j)\in\A$ corresponds to a {\it monomial} $ix+jy+a_{ij}$, the number $a_{ij}$ is
called {\it the coefficient} of $f$ of the monomial corresponding to the point $(i,j)\in\A$.  The locus of the points where a tropical polynomial $f$ is not
smooth is a {\it tropical curve} (see \cite{mikh2}). We denote this locus by $C(f)\subset U$.

\begin{definition}
\laber{def_tropseries}
Let $U\subset\RR^2, U^\circ\ne\varnothing$. A continuous function $f:U\to\RR$ is called {\it a tropical series} if for each $(x_0,y_0)\in U^\circ$ there exists an open neighborhood $W\subset U$ of $(x_0,y_0)$ such that $f|_W$ is a tropical polynomial. 
\end{definition}

\begin{definition}[Cf. Definition~\ref{def_tropicalanalitycal}]
A {\it tropical analytic curve} in $U$ is the locus of non-linearity
of a tropical series $f$ on $U^\circ$. We denote this curve by $C(f)\subset U^\circ$.
\end{definition}

\begin{example}  Tropical $\Theta$-divisors \cite{mikhalkin2006tropical} are tropical analytic curves in $\RR^2$, as well as the standard grid -- the union of all horizontal and vertical lines passing through lattice points, i.e. the set $$C=\bigcup\limits_{k\in \ZZ}\{(k,y)|y\in\RR\}\cup \{(x,k)|x\in\RR\}.$$ 
\end{example}

The following example illustrates that a tropical series on $\Omega^\circ$ in general cannot be
extended to $\partial\Omega$. 
\begin{example}
Consider a tropical analytic curve $C$ in the square $(0,1]\times[0,1]$, presented as $$C=\bigcup_{n\in\NN}\Bigl\{\big(1/n,y\big)|y\in[0,1]\Bigl\}\cup \Bigl\{\big(x,1/2\big)|x\in(0,1]\Bigl\}.$$
For all tropical series $f$ with $C(f)=C$, the sequence of values of $f(x,y)$ tends to $-\infty$ as $x\to 0$.
\end{example}

\begin{question}
When can we extend a tropical series from $\Omega^\circ$ to $\partial\Omega$? 
\end{question}

Tropical series on non-convex domains exhibit the behavior as in the following example.

\begin{example}
The function $f(x,y)=\min(3,x+[y])$ is a tropical series on the following $U$: $$U_1=\big([0,5]\times[0,1]\big)\cup \big([4,5]\times [1,2]\big), U_2=\big([0,5]\times[2,3]\big)\cup \big([4,5]\times [1,2]\big), U=(U_1\cup U_2)^\circ,$$ but $f|_{U_1^\circ}=\min (3, x), f|_{U_2^\circ}=\min(3,x+2)$ and the monomial $x$ appears with different coefficients $0,2$ in the different parts of $U$. 
\end{example}

\begin{definition}
\laber{def_omegaadmissible} 
A convex closed subset $\Omega\subset\RR^2$ is said to be {\it not admissible} if one of the following cases takes place:
\begin{itemize}
\item $\Omega$ has empty interior $\Omega^\circ$ (i.e. $\Omega$ is a subset of a line),
\item $\Omega$ is $\RR^2$,
\item $\Omega$ is a half-plane with the boundary of irrational slope,
\item $\Omega$ is a strip between two parallel lines of irrational slope.
\end{itemize} 
Otherwise, $\Omega$ is called {\it admissible}.
\end{definition}

\begin{definition}\laber{def_singlesupport}Let $\Omega\subset \RR^2$. For $(i,j)\in\mathbb{Z}^2$ denote by $c_{ij}\in\RR\cup\{-\infty\}$ the infimum of $ix+jy$ over $(x,y)\in\Omega.$ Let $\A_\Omega$ be the set of pairs $(i,j)$ with $c_{ij}\neq -\infty.$ Note that if $\Omega$ is bounded, then $\A_{\Omega}=\ZZ^2$. For each $(i,j)\in \A_\Omega$ we define 
\[\label{eq_lij}
l^{ij}_\Omega(x,y)=ix+jy-c_{ij}.
\]
\end{definition}

Note that $l^{ij}_\Omega$ is positive on $\Omega^\circ$. Also, $\A_{\Omega}$ always contains $(0,0)$.

\begin{proposition}%[\citepage{proof_propadmissible}]
\laber{prop_admissible}
A convex closed set $\Omega$ is admissible if and only if $\A_{\Omega}\ne\{(0,0)\}$ and $\Omega^\circ\ne \varnothing$. 
\end{proposition}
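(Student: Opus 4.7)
The plan is to reduce the question to the geometry of the recession cone $R=\{v\in\RR^2:\Omega+v\subseteq\Omega\}$ of the closed convex set $\Omega$, via its dual $R^*:=\{u\in\RR^2:\langle u,v\rangle\ge 0\ \forall v\in R\}$. Writing $B_\Omega:=\{u\in\RR^2:\inf_{w\in\Omega}\langle u,w\rangle>-\infty\}$, so that $\A_\Omega=B_\Omega\cap\ZZ^2$, I would first record that $B_\Omega\subseteq R^*$: if $\langle u,v\rangle<0$ for some $v\in R$, then $w_0+tv\in\Omega$ for every $t\ge 0$ and every $w_0\in\Omega$, while $\langle u,w_0+tv\rangle\to-\infty$. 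The technical half of the argument is the reverse inclusion on interiors $\mathrm{int}\,R^*\subseteq B_\Omega$, proved by a standard compactness step: if $u\in\mathrm{int}\,R^*$ and $(w_n)\subseteq\Omega$ satisfied $\langle u,w_n\rangle\to-\infty$, then $\|w_n\|\to\infty$, and a subsequential limit of $w_n/\|w_n\|$ is a unit vector in $R$ on which $\langle u,\cdot\rangle>0$, forcing $\langle u,w_n\rangle\to+\infty$, a contradiction.

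With these tools I would enumerate the possible shapes of a closed convex cone in $\RR^2$ and match each to $\Omega$. If $R=\{0\}$ then $\Omega$ is bounded, $R^*=\RR^2$, and $\A_\Omega=\ZZ^2$. If $R$ is a ray or a pointed two-dimensional cone, then $\mathrm{int}\,R^*$ is a nonempty two-dimensional open set containing infinitely many nonzero lattice points, so $\A_\Omega\ne\{(0,0)\}$. If $R$ is a closed half-plane, an elementary convex-hull argument forces $\Omega$ itself to be a half-plane $\{w:\langle w,n\rangle\ge c\}$ with $n$ perpendicular to $\partial R$, whence $B_\Omega$ equals the ray $\{\lambda n:\lambda\ge 0\}$, which contains a nonzero lattice point precisely when $n$ has rational direction, i.e.~when the boundary of $\Omega$ has rational slope. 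If $R$ is a line and $\Omega\ne\RR^2$, then translation-invariance of $\Omega$ along $R$, together with $\Omega^\circ\ne\varnothing$, forces $\Omega$ to be a strip perpendicular to some $n$, giving $B_\Omega=\RR n$; again $\A_\Omega\ne\{(0,0)\}$ iff this $n$ is rational. Finally $R=\RR^2$ gives $\Omega=\RR^2$ and $B_\Omega=\{0\}$.

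Matching this tabulation to Definition~\ref{def_omegaadmissible} yields both directions simultaneously: the cases with $\Omega^\circ\ne\varnothing$ and $\A_\Omega=\{(0,0)\}$ are exactly $\Omega=\RR^2$, a half-plane of irrational slope, or a strip of irrational slope, and together with the trivial case $\Omega^\circ=\varnothing$ this is precisely the list of non-admissible sets. The main obstacle I anticipate is the compactness step establishing $\mathrm{int}\,R^*\subseteq B_\Omega$, together with the convex-analytic identification of $\Omega$ from the shape of $R$ in the half-plane and line sub-cases; both are standard in planar convex geometry but should be spelled out explicitly.
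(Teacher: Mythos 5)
Your proposal is correct, but it takes a genuinely different route from the paper. The paper argues via support lines: for an admissible $\Omega$ it picks a boundary point and a support line $l$; if $l$ has rational slope its inward normal immediately gives a nonzero element of $\A_\Omega$, and if the slope is irrational the paper either tilts $l$ to a nearby rational-slope support line (when $l\not\subset\partial\Omega$) or passes to a second boundary point, with the half-plane and irrational strip cases excluded by admissibility; the converse direction is dismissed as easy to verify. You instead work with the recession cone $R$ and its dual: the two inclusions $B_\Omega\subseteq R^*$ and $\mathrm{int}\,R^*\subseteq B_\Omega$ (the latter by the compactness argument on $w_n/\|w_n\|$, whose subsequential limits lie in $R$ — a standard fact you rightly flag as needing to be written out), followed by the classification of closed convex cones in $\RR^2$ ($\{0\}$, ray, pointed sector, line, half-plane, $\RR^2$) and the identification of $\Omega$ as a half-plane or strip in the degenerate cases. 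Your case check is complete and the rationality of the slope enters exactly where it should, namely when $R^*$ is one-dimensional so that the lattice-point question reduces to rationality of the normal direction. What each approach buys: the paper's argument is shorter and more elementary, but its key perturbation step (``there exists another support line with a close rational slope'') is only sketched; your argument needs more convex-analytic machinery, but every step is standard, it treats both implications uniformly through a single tabulation (the paper's ``easy'' converse direction falls out of the same case list), and it yields as a by-product the exact description of the sets with $\A_\Omega=\{(0,0)\}$.
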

\begin{proof}%[Proof of Proposition~\ref{prop_admissible}]
%\label{proof_propadmissible}
It is easy to verify that if $\Omega$ is not admissible, then $\Omega^\circ=\varnothing$ or $\A_{\Omega}=\{(0,0)\}$. Let us prove ``only if'' direction. Since $\Omega\ne\RR^2$, there exists a boundary point $z$ of $\Omega$ and a support line $l$ at $z$. If the slope of $l$ is rational, then $\A_{\Omega}$ contains the corresponding lattice point; if this slope is irrational but $l$ does not belong to the boundary of $\Omega$, then there exists another support line of $\Omega$ with a close rational slope. So, we may suppose that $l$ is contained in the boundary of $\Omega$. If there is no other boundary points of $\Omega$, then $\Omega$ is a half-plane and is not admissible. If there exists another boundary point in $\partial\Omega\setminus l$, then we repeat the above arguments and find a support line of $\Omega$ with rational slope. Another case, i.e. that $\Omega$ is a strip between two lines of the same irrational slope, is not possible since $\Omega$ is admissible.
\end{proof}

\section{$\Omega$-tropical series}
\label{sec_omegaseries}
From now on we always suppose that $\Omega$ is an {\bf admissible} convex closed subset of $\RR^2$.

\begin{definition}
\laber{def_tropicalanalitycal} 
An $\Omega$-{\it tropical series} is a function
$f:\Omega\to\RR_{\geq 0}$, $f|_{\partial\Omega}= 0$,  such that
\begin{equation}
\label{eq_omegatrop}
f(x,y)=\inf\limits_{(i,j)\in\A}(ix+jy+a_{ij}), a_{ij}\in\RR,
\end{equation}
 and
$\A\subset\ZZ^2$ is not necessary finite. An $\Omega$-{\it
tropical analytic curve} $C(f)$ on $\Omega^\circ$ is the corner locus
(i.e. the set of non-smooth points) of an $\Omega$-tropical series on $\Omega^\circ$. 
\end{definition}
The reason to consider only admissible sets is Proposition~\ref{prop_admissible} is that an $\Omega$-tropical analytic curve on non-admissible $\Omega$ is always the empty set, because either $\Omega^\circ$ is empty or the only $\Omega$-tropical series is the function $0$.

\begin{question} An $\Omega$-tropical series can be thought of an analog of a series  $f_t(x,y)=\sum_{(i,j)\in \A_\Omega}t^{a_{ij}} x^iy^j$ with $t\in\RR_{>0}$ very small. Is is true that $\Omega$ is the limit of the images of the region of convergence of $f_t$ under the map $\log_t: (x,y)\to (\log_t|x|,\log_t|y|)$, and the corresponding $\Omega$-tropical analytic curve is the limit of the images of $\{f_t(x,y)=0\}$ under $\log_t|\cdot|$ when $t\to 0$?
\end{question}

\begin{lemma}
\laber{lemma_estimate} Let $U\subset\RR^2$ be an open set and $K\subset U$ be a compact set. For any  $C>0$ the set $$\mathcal{M}=\big\{(i,j)\in\ZZ^2| \exists  d\in \RR, (ix+jy+d)|_U\geq 0, \exists (x_0,y_0)\in K, (ix_0+jy_0+d)\leq C\big\},$$
i.e. the set of monomials which potentially can contribute on $K$ to an $\Omega$-tropical function $f$ with $\max_K f\leq C$, is finite.
\end{lemma}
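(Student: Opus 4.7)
The plan is to exploit the geometric content of the two conditions defining $\mathcal{M}$: the constraint $(ix+jy+d)|_U\geq 0$ bounds $d$ from below, while the existence of $(x_0,y_0)\in K$ with $ix_0+jy_0+d\leq C$ bounds it from above, and the gap between these bounds grows like $\sqrt{i^2+j^2}$ thanks to the fact that $K$ sits strictly inside $U$.

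First, since $K$ is compact and contained in the open set $U$, I would set $\delta=\dist(K,\mathbb{R}^2\setminus U)>0$ (the case $U=\mathbb{R}^2$ is trivial: $ix+jy+d\geq 0$ on all of $\mathbb{R}^2$ forces $(i,j)=(0,0)$, so $\mathcal{M}\subseteq\{(0,0)\}$). Then for any $(x_0,y_0)\in K$ the closed ball $\overline{B}((x_0,y_0),\delta)$ lies in $U$, and the linear function $ix+jy$ attains its minimum on this ball at $(x_0-\delta i/\sqrt{i^2+j^2},\,y_0-\delta j/\sqrt{i^2+j^2})$, with value $ix_0+jy_0-\delta\sqrt{i^2+j^2}$.

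Next, I combine this with the two defining inequalities. The non-negativity condition evaluated at the minimizing point gives
\[
d\;\geq\;\delta\sqrt{i^2+j^2}-ix_0-jy_0,
\]
while the hypothesis that $ix_0+jy_0+d\leq C$ rearranges to
\[
d\;\leq\;C-ix_0-jy_0.
\]
Subtracting the two inequalities cancels $ix_0+jy_0$ and yields
\[
\delta\sqrt{i^2+j^2}\;\leq\;C,
\qquad\text{i.e.}\qquad
\sqrt{i^2+j^2}\;\leq\;C/\delta.
\]
Thus $\mathcal{M}$ is contained in the disk of radius $C/\delta$ in $\mathbb{R}^2$, and hence contains only finitely many lattice points.

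There is no real obstacle here; the only subtlety is verifying that $\delta=\dist(K,\mathbb{R}^2\setminus U)$ is strictly positive (which follows from $K$ being compact and $\mathbb{R}^2\setminus U$ closed and disjoint from $K$) and isolating the trivial case $U=\mathbb{R}^2$ where the complement is empty. Once $\delta>0$ is in hand, the argument is a one-line comparison of the upper and lower bounds for the constant $d$.
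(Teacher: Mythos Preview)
Your proof is correct and follows essentially the same approach as the paper's: both isolate the case $U=\RR^2$, set $R=\dist(K,\RR^2\setminus U)>0$, and derive the bound $\sqrt{i^2+j^2}\leq C/R$ by comparing the lower bound on $d$ (forced by nonnegativity on a ball of radius $R$ around $K$) with the upper bound (forced by the value at some point of $K$). The paper states the key inequality $(ix+jy+d)|_K\geq R\sqrt{i^2+j^2}$ directly, while you spell out its geometric justification via the minimizing point on the ball; the arguments are otherwise identical.
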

\begin{proof} If $U=\RR^2$, $\mathcal{M}=\{(0,0)\}$. So, let $R>0$ denote the distance between $K$ and $\RR^2\backslash U.$ Then $(ix+jy+d)|_K\geq R\cdot\sqrt{i^2+j^2}$ for any $i,j$ and $d$ such that $(ix+jy+d)|_U\geq 0.$ Therefore, $i^2+j^2\leq C^2R^{-2}$ for all $(i,j)\in\mathcal{M}.$
\end{proof}

\begin{lemma}%[\citepage{proof_welldefinedseries}]
\laber{lemma_welldefinedseries}
In the definition of an $\Omega$-tropical series $f$, \eqref{eq_omegatrop}, we can %\add{at the boundary cannot. NK: I do not know how to write this} 
replace ``$\inf$'' by ``$\min$'', i.e. at every point $(x,y)\in\Omega^\circ$ we have $$\inf\limits_{(i,j)\in\A}(ix+jy+a_{ij})=\min\limits_{(i,j)\in\A}(ix+jy+a_{ij}).$$
\end{lemma}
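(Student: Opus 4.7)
The plan is to show that at any given interior point, the infimum in \eqref{eq_omegatrop} is effectively over a finite subfamily of monomials, so it is a minimum. The engine driving this is Lemma~\ref{lemma_estimate}, and the key observation that unlocks it is that the hypothesis $f \geq 0$ forces each individual monomial in the defining family to be non-negative on $\Omega$. Indeed, for any $(i,j) \in \A$ and any $(x,y) \in \Omega$, we have $ix+jy+a_{ij} \geq \inf_{(i',j')\in\A}(i'x+j'y+a_{i'j'}) = f(x,y) \geq 0$. In particular, each monomial is non-negative on $U := \Omega^\circ$.

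Fix $(x_0,y_0) \in \Omega^\circ$ and pick a compact neighborhood $K \subset \Omega^\circ$ of $(x_0,y_0)$. Choose any constant $C$ with $C > f(x_0,y_0)$. By definition of infimum, there exists at least one $(i_*,j_*) \in \A$ with $i_*x_0 + j_*y_0 + a_{i_*j_*} < C$, so the set
\[
S = \bigl\{(i,j)\in\A \;:\; ix_0+jy_0+a_{ij} \leq C\bigr\}
\]
is non-empty. For every $(i,j) \in S$ the linear function $ix+jy+a_{ij}$ is non-negative on $U$ (by the previous paragraph) and takes a value $\leq C$ at $(x_0,y_0) \in K$, so $(i,j)$ lies in the finite set $\mathcal{M}$ produced by Lemma~\ref{lemma_estimate}. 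Hence $S$ is finite.

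Finally, monomials outside $S$ satisfy $ix_0+jy_0+a_{ij} > C > f(x_0,y_0)$ and therefore do not contribute to the infimum at $(x_0,y_0)$. Thus
\[
\inf_{(i,j)\in\A}\bigl(ix_0+jy_0+a_{ij}\bigr) \;=\; \min_{(i,j)\in S}\bigl(ix_0+jy_0+a_{ij}\bigr),
\]
and the minimum on the right is attained because $S$ is a non-empty finite set. Since $(x_0,y_0)$ was an arbitrary interior point, this proves the lemma.

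The only mildly delicate point is the first step — realizing that $f \geq 0$ propagates to positivity of each term — without which one could not invoke Lemma~\ref{lemma_estimate}; everything else is a routine application of that lemma combined with a standard ``cut off at $C$'' argument.
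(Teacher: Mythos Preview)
Your proof is correct and follows essentially the same route as the paper: both hinge on the observation that $f\geq 0$ forces every monomial $ix+jy+a_{ij}$ to be non-negative on $\Omega$, and then invoke Lemma~\ref{lemma_estimate} to bound the set of monomials that can come close to the infimum at an interior point. The only difference is packaging---the paper argues by contradiction (if the infimum is not attained, infinitely many monomials lie below some $C$, contradicting Lemma~\ref{lemma_estimate}), while you give the direct version---but the mathematical content is identical.
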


\begin{proof}%[Proof of Lemma~\ref{lemma_welldefinedseries}]
\label{proof_welldefinedseries}
Suppose that for a point $(x_0,y_0)\in\Omega^\circ$ and  for each $(i,j)\in\A$ the value of the monomial $a_{ij}+ix_0+jy_0$ is distinct from the value of the infimum  $$\inf\limits_{(i,j)\in\A}(ix_0+jy_0+a_{ij}).$$  Thus, there exists $\C>0$ such that we have $a_{ij}+ix_0+jy_0<\C$ for infinite number of monomials $(i,j)\in\A$.  Since $(a_{ij}+ix+jy)|_\Omega\geq 0$ for all $(i,j)\in\A$, applying Lemma~\ref{lemma_estimate} yields a contradiction. 
\end{proof}
At a point on $\partial\Omega$ where there is no tangent line with a rational slope we actually have to take the infimum, cf. the proof of Lemma~\ref{lemma_lomegaiszero}. Similarly, applying Lemma~\ref{lemma_estimate} for small compact neighbors of points we obtain the following result.

\begin{corollary}
%\laber{lemma_omegatropicalisusual}
An $\Omega$-tropical series (Definition~\ref{def_tropicalanalitycal}) is a tropical series on $\Omega$ in the sense of Definition~\ref{def_tropseries}.
\end{corollary}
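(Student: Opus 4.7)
The plan is to localize to a small compact neighborhood of each interior point and apply Lemma~\ref{lemma_estimate} directly, exactly as the sentence preceding the corollary suggests. This will give the local tropical-polynomial structure that Definition~\ref{def_tropseries} requires on $\Omega^\circ$, and continuity of $f$ on $\Omega^\circ$ is a free byproduct, since a minimum of finitely many affine functions is continuous.

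First I would observe that every individual monomial appearing in \eqref{eq_omegatrop} is nonnegative on $\Omega$: the infimum is a lower bound for each term and $f\geq 0$, so $ix+jy+a_{ij}\geq f\geq 0$ on $\Omega$ for every $(i,j)\in\A$. This verifies the hypothesis $(ix+jy+d)|_U\geq 0$ of Lemma~\ref{lemma_estimate} with $U=\Omega^\circ$ and $d=a_{ij}$, so the lemma will be applicable to any compact subset of $\Omega^\circ$.

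Next, fix $(x_0,y_0)\in\Omega^\circ$. By Lemma~\ref{lemma_welldefinedseries} some monomial $\ell_0$ attains the minimum at $(x_0,y_0)$. Choose a compact neighborhood $K\subset\Omega^\circ$ of $(x_0,y_0)$ and set $C=\max_K\ell_0<\infty$, so that $f\leq \ell_0 \leq C$ on $K$. Feeding $(U,K,C)=(\Omega^\circ,K,C)$ into Lemma~\ref{lemma_estimate} yields a finite set $\mathcal{M}\subset\ZZ^2$ that contains
\[
\A' := \{(i,j)\in\A : ix+jy+a_{ij}\leq C \text{ for some }(x,y)\in K\},
\]
since each $(i,j)\in\A'$ has $d=a_{ij}$ satisfying the two requirements defining $\mathcal{M}$. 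In particular $\A'$ is finite.

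On the open set $W=K^\circ$ it remains to verify that $f|_W=\min_{(i,j)\in\A'}(ix+jy+a_{ij})$: for any $(x,y)\in W$ we have $f(x,y)\leq C$, and Lemma~\ref{lemma_welldefinedseries} provides an $(i,j)\in\A$ attaining this value, which then sits in $\A'$. So monomials outside $\A'$ never enter the minimum on $W$, and $f|_W$ is a tropical polynomial with support in $\A'$. The only slightly delicate step in the whole argument is the passage from infimum to minimum needed to identify this attaining monomial, but Lemma~\ref{lemma_welldefinedseries} has already taken care of it, so I do not foresee any serious obstacle.
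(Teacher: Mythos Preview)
Your proposal is correct and follows exactly the approach the paper sketches in the sentence preceding the corollary: apply Lemma~\ref{lemma_estimate} to a small compact neighborhood of each interior point to see that only finitely many monomials can contribute there. The one point you gloss over---continuity of $f$ at $\partial\Omega$, which Definition~\ref{def_tropseries} also requires on all of $U=\Omega$---is immediate from upper semicontinuity of an infimum of affine functions together with $f\geq 0$ and $f|_{\partial\Omega}=0$, and the paper does not spell this out either.
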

%\begin{proof}
%\label{proof_omegatropicalisusual}
%We take any point $(x_0,y_0)\in\Omega^\circ$ and a small compact neighborhood $K\subset\Omega^\circ$ of $(x_0,y_0)$. There exists $\C>0$ such that our $\Omega$-tropical series $f$ restricted to $K$ is less than $\C$. We apply Lemma~\ref{lemma_estimate} and see that $f|_K$ is a tropical polynomial. 
%\end{proof}

Note that an $\Omega$-tropical series $f:\Omega\to\RR$ always has different presentations as the minimum of linear functions. For example, if $\Omega$ is the square $[0,1]\times[0,1]\subset\RR^2$, then $\min(x,1-x,y,1-y,1/3)$ equals at every point of $\Omega$ to $\min(x,1-x,y,1-y,1/3, 2x,5-2x)$. 
\begin{definition}
[cf. \cite{kiselman2014questions}, Lemma 5.3] \laber{def_canonicalseries}
To resolve this ambiguity, we suppose that, in $\Omega^\circ$, a tropical series $f$ is always (if the opposite is not stated explicitly) given by 
\begin{equation}
\laber{eq_series}
f(x,y)=\min\limits_{(i,j)\in\A}(ix+jy+a_{ij})
\end{equation}
 with $\A=\A_\Omega$ (Definition~\ref{def_singlesupport}) and with as minimal as possible coefficients $a_{ij}$. We call this presentation {\it the canonical form} of a tropical series. For each $\Omega$-tropical series there exists a unique canonical form. 
\end{definition}

\begin{example}
\label{ex_bigform}
The canonical form of $\min(x,1-x,y,1-y,1/3)$ on $\Omega=[0,1] \times [0,1]$ is $f(x,y)$ as in \eqref{eq_series} with $\A=\ZZ^2$, $a_{00}=1/3$ and $a_{ij}  = -\min_{(x,y)\in\Omega}(ix+jy)$ for $(i,j)\in\ZZ^2\setminus\{(0,0)\}$. 
\end{example}
\begin{proof}
It is easy to check that $f(x,y)=\min(x,1-x,y,1-y,1/3)$ on $\Omega$. All the coefficients $a_{ij},(i,j)\ne (0,0)$ are chosen as minimal with the condition that $ix+jy+a_{ij}$ is non-negative on $\Omega$. Finally, in the canonical form of $\min(x,1-x,y,1-y,1/3)$ the coefficient $a_{00}$ can not be less than $1/3$.
\end{proof}

\begin{lemma}
\laber{lemma_usualisomegatropical}
Suppose that $\Omega$ is admissible and a continuous function $f:\Omega\to\RR$ satisfies two conditions: 1) $f|_{\Omega^\circ}$ is a tropical series, and 2) $f|_{\partial\Omega}=0$. Then $f$ is an $\Omega$-tropical series (Definition~\ref{def_tropicalanalitycal}).
\end{lemma}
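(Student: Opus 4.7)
My plan is to construct the canonical data $\{a_{ij}\}_{(i,j)\in\A_\Omega}$ directly from $f$ and check that the resulting infimum recovers $f$. The key structural observation is that a function which is locally the minimum of finitely many affine functions is concave; combined with continuity this means $f$ is concave on all of $\Omega$. A standard chord argument (for any interior point lying on a segment with both endpoints on $\partial\Omega$) together with $f|_{\partial\Omega}=0$ then forces $f\geq 0$ on $\Omega$.

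For each $(i,j)\in\A_\Omega$ I would set
\[
a_{ij} \ :=\ \sup_{(x,y)\in\Omega}\bigl(f(x,y)-ix-jy\bigr),
\]
i.e.\ the smallest constant making $ix+jy+a_{ij}$ dominate $f$ throughout $\Omega$, and put $g(x,y)=\inf_{(i,j)\in\A_\Omega}(ix+jy+a_{ij})$. By construction $g\geq f$. For the reverse inequality at an interior point $(x_0,y_0)$, the local tropical-polynomial expression of $f$ supplies an integer affine function $L(x,y)=i_0x+j_0y+b_0$ with $L(x_0,y_0)=f(x_0,y_0)$ and $L\geq f$ near $(x_0,y_0)$. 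Concavity of $f$ promotes this local support to a global one: $L\geq f$ on all of $\Omega$. Combined with $f\geq 0$ this shows $i_0x+j_0y\geq -b_0$ on $\Omega$, so $(i_0,j_0)\in\A_\Omega$ and $b_0\geq a_{i_0j_0}$ by definition of the supremum. Evaluating at $(x_0,y_0)$ then gives $g(x_0,y_0)\leq i_0x_0+j_0y_0+a_{i_0j_0}\leq f(x_0,y_0)$, hence equality on $\Omega^\circ$. The verification that each $a_{ij}$ is finite drops out of the same argument: any global support gives $f\leq L$, bounding $f-ix-jy$ from above.

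To extend the equality to $\partial\Omega$ the inequality $g\geq f=0$ is automatic, so it remains to show $g\leq 0$ on $\partial\Omega$. I would take a sequence $(x_n,y_n)\in\Omega^\circ$ converging to a boundary point $(x_\ast,y_\ast)$; continuity of $f$ and $g=f$ on $\Omega^\circ$ give $g(x_n,y_n)\to 0$. Pick for each $n$ a monomial $(i_n,j_n)$ achieving $g(x_n,y_n)$ up to $1/n$, and apply Lemma~\ref{lemma_estimate} on a small compact neighborhood of $(x_\ast,y_\ast)$ inside a slightly enlarged open set in $\Omega$: only finitely many monomials can give values below any fixed threshold on that neighborhood. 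A subsequence then selects a single monomial whose value at $(x_\ast,y_\ast)$ is $\leq 0$, so $g(x_\ast,y_\ast)\leq 0$.

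The main obstacle is this last step, because the infimum of infinitely many affine functions is only upper semicontinuous and $\A_\Omega$ is in general infinite. The admissibility of $\Omega$ (Proposition~\ref{prop_admissible}) guarantees enough support directions with rational slope to make $\A_\Omega$ rich, and Lemma~\ref{lemma_estimate} then provides the finiteness needed to extract the boundary minimum from interior data. The interior equality, by contrast, is essentially a formality once concavity has been invoked.
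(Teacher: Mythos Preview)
Your core step is exactly the paper's: local concavity of a tropical series plus convexity of $\Omega$ promote any local monomial $ix+jy+a_{ij}$ to a global upper bound for $f$, so $f$ equals the minimum over its local pieces on $\Omega^\circ$. The paper stops there, indexing by the set of monomials that actually occur in $f$; Definition~\ref{def_tropicalanalitycal} does not require the index set to be $\A_\Omega$. Your decision to build the canonical form over all of $\A_\Omega$ is what forces you to verify $(i_0,j_0)\in\A_\Omega$, which in turn needs $f\geq 0$ and the finiteness of every $a_{ij}$. The paper's route sidesteps both of these checks entirely.

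There is a genuine gap in your chord argument for $f\geq 0$: it does not cover admissible $\Omega$ that are half-planes with rational boundary, since no line through an interior point meets $\partial\Omega$ in two points. Worse, the conclusion itself is not available from the stated hypotheses in that case: $f(x,y)=-x$ on $\Omega=\{x\geq 0\}$ is continuous, a tropical series on $\Omega^\circ$, and vanishes on $\partial\Omega$, yet $f<0$ inside. The paper's short proof is equally silent about non-negativity (and also claims the representation only on $\Omega^\circ$); in the paper's actual applications $f\geq 0$ is always known in advance, so treat it as an implicit hypothesis rather than something to derive. Your boundary argument also wobbles: you cannot place a compact neighbourhood of $(x_\ast,y_\ast)\in\partial\Omega$ inside an open $U\subset\Omega$ as Lemma~\ref{lemma_estimate} requires, so the finiteness step there is not justified as written.
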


\begin{proof}
Let $f|_U=ix+jy+a_{ij}$ for an open $U\subset\Omega^\circ$. It follows from convexity of $\Omega$ and local concaivity of $f$ that $f(x,y)\leq ix+jy+a_{ij}$ on $\Omega$. Therefore in $\Omega^\circ$ we have
$$f(x,y) = \min \{ix+jy+a_{ij}| (i,j,a_{ij}), \exists \text{ open } U\subset \Omega^\circ,
 f(x,y)|_U=ix+ij+a_{ij}\}.$$
\end{proof}

\section{Tropical distance function}
\label{sec_distance}

\begin{definition}
\laber{def_generalweighteddistance}
We use the notation of \eqref{eq_lij}. The {\it weighted distance function} $l_\Omega$ on $\Omega$ is defined by 
$$l_\Omega(x,y)=\inf \big\{l^{ij}_\Omega(x,y)| (i,j)\in \A_\Omega\setminus \{(0,0)\} \big\}.$$
\end{definition}

%Since $l_\Omega$ is an infimum of non-negative functions, it is a non-negative concave \add{suspicious}continuous function on $\Omega.$ 

%\begin{definition}
%Denote by $C_{\Omega}$ the tropical analytic curve defined by $l_\Omega.$ \add{do we need this nottaion?}
%\end{definition}

An example of a tropical analytical curve defined by $l_\Omega$ is drawn on the right hand side of Figure~\ref{fig_circle}.

\begin{remark}
\laber{rem_lomegaestimate}
If $f(x,y)=ix+jy+a_{ij},(i,j)\in\ZZ^2\setminus \{(0,0)\}, a_{ij}\in\RR$, $f|_\Omega\geq 0$, then $f\geq l_\Omega$ on $\Omega$.
\end{remark}

The same argument in the proof of Lemma~\ref{lemma_welldefinedseries} proves the following lemma.
\begin{lemma}
The function $l_\Omega$ is a tropical series in $\Omega^\circ$ (Definition~\ref{def_tropseries}). 
\end{lemma}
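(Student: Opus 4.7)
The proof should mirror the argument from Lemma~\ref{lemma_welldefinedseries}: reduce the infimum defining $l_\Omega$ to a minimum over a finite set on a compact neighborhood, using Lemma~\ref{lemma_estimate}.

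First I would fix an arbitrary point $(x_0,y_0)\in\Omega^\circ$ and select a compact neighborhood $K\subset\Omega^\circ$ of this point (for instance a small closed disk; since $\Omega^\circ$ is open in $\RR^2$, the distance from $K$ to $\RR^2\setminus\Omega^\circ$ is positive). Since $\Omega$ is admissible, Proposition~\ref{prop_admissible} guarantees that $\A_\Omega\setminus\{(0,0)\}$ is non-empty, so I can pick some $(i_0,j_0)$ in this set and set $C=\max_{K}l^{i_0 j_0}_\Omega+1$, which is a finite real number. Then by definition $l_\Omega(x,y)\le l^{i_0 j_0}_\Omega(x,y)\le C$ for every $(x,y)\in K$.

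Next, I apply Lemma~\ref{lemma_estimate} to $U=\Omega^\circ$, $K$, and this constant $C$. The lemma produces a finite set $\mathcal M\subset\ZZ^2$ of monomials: for every $(i,j)\in\A_\Omega$ the linear function $l^{ij}_\Omega(x,y)=ix+jy-c_{ij}$ is non-negative on $\Omega$ (hence on $U=\Omega^\circ$), and if for some $(x,y)\in K$ we had $l^{ij}_\Omega(x,y)\le C$, then $(i,j)\in\mathcal M$. Consequently, for $(i,j)\in\A_\Omega\setminus\mathcal M$ we have $l^{ij}_\Omega(x,y)>C\ge l_\Omega(x,y)$ on $K$, so such indices never contribute to the infimum on $K$. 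Therefore on $K$ we obtain the pointwise identity
\[
l_\Omega(x,y)=\min_{(i,j)\in(\mathcal M\cap\A_\Omega)\setminus\{(0,0)\}}\bigl(ix+jy-c_{ij}\bigr),
\]
which exhibits $l_\Omega|_K$ as a tropical polynomial (finite minimum of integer-slope affine functions). Since $(x_0,y_0)\in\Omega^\circ$ was arbitrary and the interior of $K$ is a neighborhood of $(x_0,y_0)$, this verifies the local condition in Definition~\ref{def_tropseries}; continuity of $l_\Omega$ on $\Omega^\circ$ is automatic from being locally a minimum of finitely many linear functions.

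The only delicate step is the initial finiteness bound: I must choose $C$ so that $l_\Omega\le C$ uniformly on $K$ before invoking Lemma~\ref{lemma_estimate}, which is exactly why the admissibility hypothesis is needed (to ensure that at least one honest index $(i_0,j_0)$ exists and gives a finite upper bound for the infimum). Everything else is a formal repetition of the reasoning already used in the proof of Lemma~\ref{lemma_welldefinedseries}.
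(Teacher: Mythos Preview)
Your proposal is correct and follows exactly the route the paper indicates: the paper's ``proof'' is just the one-line remark that the same argument as in Lemma~\ref{lemma_welldefinedseries} works, and you have spelled out precisely that argument (bounding the infimum on a compact neighborhood via a single $l^{i_0j_0}_\Omega$ and then invoking Lemma~\ref{lemma_estimate} to cut down to finitely many contributing monomials). No gaps.
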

%\begin{proof}
%We proceed . In particular, for $z\in\Omega^\circ$, $l_\Omega(z)=\min \{l^{ij}_\Omega(z)|{(i,j)\in \A_\Omega\setminus \{(0,0)\}}\}.$
%\end{proof}

\begin{figure}[htbp]
\label{fig_twodiscsonepoint}
  \includegraphics[width=0.3\textwidth]{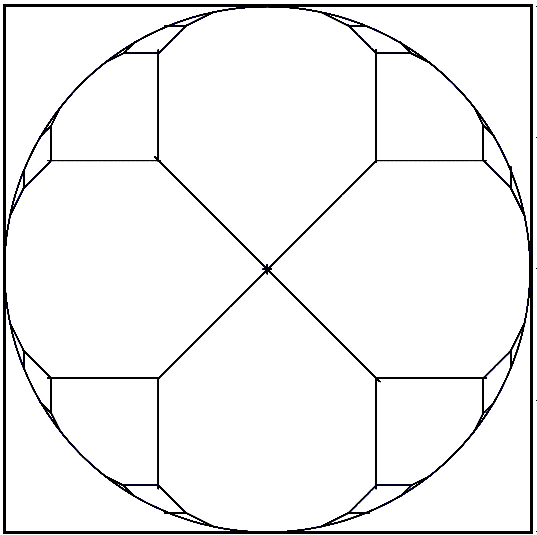}\
 \includegraphics[width=0.3\textwidth]{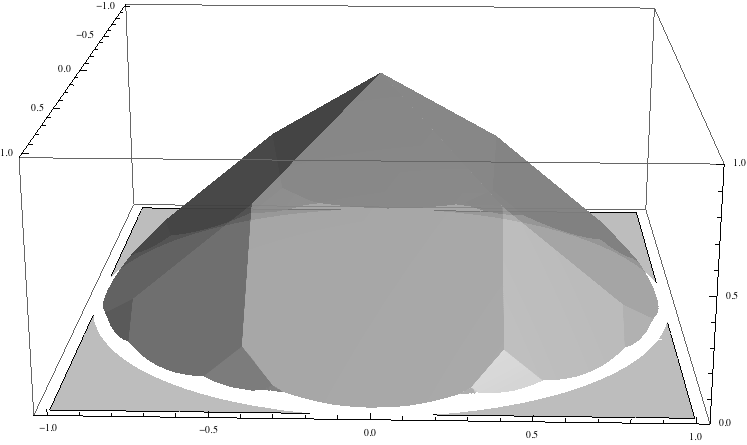}\
\caption{The central picture shows the corner locus of the right picture which is $l_{\Omega}$ (Definition~\ref{def_generalweighteddistance}) for $\Omega=\{x^2+y^2\leq 1\}$.}
  \label{fig_circle}
\end{figure}

\begin{lemma}%[\citepage{proof_lomegaiszero}]
\laber{lemma_lomegaiszero}
The function $l_\Omega$ is an $\Omega$-tropical series.
\end{lemma}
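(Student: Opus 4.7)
The plan is to verify the three defining conditions of Definition~\ref{def_tropicalanalitycal} for $l_\Omega$: the infimum presentation, values in $\RR_{\geq 0}$ on $\Omega$, and vanishing on $\partial\Omega$. Two of these are essentially immediate. Taking $\A=\A_\Omega\setminus\{(0,0)\}$ and $a_{ij}=-c_{ij}$, the definition of $l_\Omega$ is exactly the required infimum presentation, and $\A$ is nonempty by Proposition~\ref{prop_admissible}. Each summand $l^{ij}_\Omega(x,y)=ix+jy-c_{ij}$ is nonnegative on $\Omega$ because $c_{ij}=\inf_\Omega(ix+jy)$, and any single summand already provides a finite upper bound, so $l_\Omega$ is a well-defined function $\Omega\to[0,\infty)$.

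The main work is showing $l_\Omega(z)=0$ for every $z\in\partial\Omega$. I would fix such a $z$ and $\varepsilon>0$ and produce $(i,j)\in\A_\Omega\setminus\{(0,0)\}$ with $l^{ij}_\Omega(z)=\langle z,(i,j)\rangle-c_{ij}<\varepsilon$. The argument splits on the outward normal cone $N_z$ of $\Omega$ at $z$. If $N_z$ contains a nonzero lattice vector $(a,b)$ — which is automatic when $z$ is a vertex of $\Omega$ (so $N_z$ is two-dimensional and meets rational directions by density) or when $z$ lies on a rational-slope edge — then the linear functional $\langle\cdot,(a,b)\rangle$ attains its maximum on $\Omega$ at $z$, giving $c_{-a,-b}=-az_x-bz_y$ and hence $l^{-a,-b}_\Omega(z)=0$. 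Otherwise $N_z$ is a single irrational ray $\RR_{>0}\vec{n}$. In this case I would approximate $-\vec{n}$ by rational directions via Dirichlet's theorem, producing lattice vectors $(i,j)$ of the form $(i,j)=t(-\vec{n})+\vec{e}$ with $t\asymp\|(i,j)\|$ and $\|\vec{e}\|$ arbitrarily small. Letting $w^*\in\Omega$ (approximately) realize the infimum $c_{ij}$ and using that $\langle\cdot,\vec{n}\rangle\leq\langle z,\vec{n}\rangle$ on $\Omega$, one expands
\[
l^{ij}_\Omega(z)=\langle z-w^*,(i,j)\rangle=-t\langle z-w^*,\vec{n}\rangle+\langle z-w^*,\vec{e}\rangle.
\]
The first term is nonpositive, while the second is bounded in absolute value by $\|z-w^*\|\cdot\|\vec{e}\|$, so $l^{ij}_\Omega(z)\leq\|z-w^*\|\cdot\|\vec{e}\|\to 0$ as the Dirichlet approximation improves.

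The main obstacle lies in the irrational case: one has to check that the chosen $(i,j)$ actually belong to $\A_\Omega$ and that $\|z-w^*\|$ stays bounded. Both facts rely on admissibility: the recession cone of $\Omega$ cannot contain the direction perpendicular to $\vec{n}$ (otherwise $\Omega$ would be a half-plane or strip with irrational boundary, a case excluded by Proposition~\ref{prop_admissible}). This transversality both ensures that any $(i,j)$ sufficiently close in direction to $-\vec{n}$ lies in $\A_\Omega$ and prevents the minimizer $w^*$ from escaping to infinity, making the product $\|z-w^*\|\cdot\|\vec{e}\|$ genuinely tend to zero and completing the verification that $l_\Omega|_{\partial\Omega}=0$.
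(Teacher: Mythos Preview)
Your argument follows the same strategy as the paper: the only nontrivial point is $l_\Omega(z)=0$ at a boundary point $z$ whose unique support line $L$ has irrational slope, and both you and the paper handle this via Diophantine approximation of the normal direction.

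The difference is in the anchor point. The paper fixes a point $z'\in L\setminus\Omega$ (which exists because admissibility rules out $L\subset\partial\Omega$) and observes that for one parity of continued-fraction convergents $p_k/q_k$ the line through $z'$ with slope $p_k/q_k$ misses $\Omega$; the resulting linear form then dominates $l^{-p_k,q_k}_\Omega$, and its value at $z$ is at most $|x_1-x_2|/q_k\to 0$ with the numerator \emph{fixed}. You instead use an (approximate) minimizer $w^*$ of $\langle\cdot,(i,j)\rangle$, which moves with $(i,j)$, and then must argue that $\|z-w^*\|$ stays bounded. Your justification --- ``the recession cone cannot contain the direction perpendicular to $\vec n$'' --- is slightly overstated: admissibility only excludes the recession cone containing \emph{both} of $\pm\vec n^{\perp}$ (otherwise $\Omega$ would be a half-plane or strip with irrational boundary), but it may well contain one of them. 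In that case only Dirichlet approximants on one side lie in $\A_\Omega$, and the boundedness of $w^*$ needs a separate check. This is exactly what the paper's even/odd parity choice handles, and replacing your $w^*$ by the fixed $z'$ removes the difficulty entirely. With that adjustment your argument is correct and essentially equivalent to the paper's.
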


\begin{proof}%[Proof of Lemma~\ref{lemma_lomegaiszero}]
\label{proof_lomegaiszero}
It is enough to prove that  $l_\Omega$ is zero on $\partial\Omega$ and continuous when we approach $\partial\Omega$.
It is clear that $l_\Omega=0$ on the points of $\{l^{ij}_\Omega=0\}\cap\partial\Omega$ for all $(i,j)\in\A_\Omega$. Suppose that there exists a point $(x_1,y_1)=z\in\partial\Omega$ where the only support line $L$ is of irrational slope $\alpha$. %It means that it is enough to consider the
%situation near irrational smooth edges. Consider an edge of a boundary with irrational slope $\alpha.$ By the previous lemma we can
%suppose that the boundary near the edge is given by a graph of a smooth function $\alpha x+f(x)$. 

Since $\Omega$ is admissible, there exists a point $z'=(x_2,y_2)\in L$ which does not belong to $\partial\Omega$. Using continued fractions for $\alpha$, we get two sequences of numbers, $p_{2n}/q_{2n}<\alpha<p_{2n+1}/q_{2n+1}$ such that $$|\alpha-p_{m}/q_{m}|<1/q_{m}^2, \text{\ for\ all\ $m$,}$$ and $q_m$ tends to infinity.
Either for all even $k$, or for all odd $k$ the line through $z'$ with the slope $p_k/q_k$ does not intersect $\Omega$, so $(-p_k,q_k)\in\A_\Omega$ or $(p_k,-q_k)\in\A_\Omega$.  Thus, for such $k$ the absolute value of the linear function $$l_k(x,y)=q_i\left(y-\frac{p_k}{q_k}x- \big(y_2-\frac{p_k}{q_k}x_2\big)\right)$$ estimates $l^{-p_k,q_k}(x,y), (x,y)\in\Omega^\circ$ from above. The absolute value of $l_k$ at $z$ is 
\begin{align*}
|-p_kx_1 +q_ky_1+ (p_kx_2-q_ky_2)| = &|p_k(x_2-x_1)+q_k(y_1-y_2)| = \\
&|(x_1-x_2) (-p_k+q_k\alpha)|= |(x_1-x_2)(\alpha-p_k/q_k)q_k|\leq \left|\frac{x_1-x_2}{q_k}\right|,
\end{align*}

which tends to zero as $i\to\infty$. Therefore, we can construct a sequence of functions $l_k,k\to\infty$, whose values at $z$ tend to zero, and $l_{\Omega}\leq |l_k|$. This proves both continuity of $l_{\Omega}$ at $z$ and that $l_{\Omega}(z)=0$ for all $z\in\partial\Omega$.
 
%Then for a tangency of slope $p/q$ we have $\alpha +
%f'(x_0)=p/q$, then the equation of the line is $y-\alpha x_0 -
%f(x_0)=p/q(x-x_0)$ and we want its value at $(-t,-\alpha t)$.
%Id est, $x_0=(f')^{-1}(p/q-\alpha),$
%$$q(y-\alpha (f')^{-1}(p/q-\alpha) - f((f')^{-1}(p/q-\alpha)))=p(x-(f')^{-1}(p/q-\alpha)).$$
%?
%$$-t(\alpha q-p)+(p-\alpha q)((f')^{-1}(p/q-\alpha))-qf((f')^{-1}(p/q-\alpha))$$
%
%Since $|\alpha q-p|<1/q$ we have $|-t(\alpha q-p)|<t/q$,
%$$(p-\alpha q)((f')^{-1}(p/q-\alpha))<1/q (f')^{-1}(1/q^2),$$
%which tends to zero as $q\to\infty$, and since $f((f')^{-1}(x))<x$ for
%$x$ small enough, we have 
%$$qf((f')^{-1}(p/q-\alpha))<q(p/q-\alpha)<1/q.$$

\end{proof}

\section{Wave operators $G_\p$}
\label{sec_gp}

Recall that $\Omega\subset \RR^2$ is admissible (Definition~\ref{def_omegaadmissible}). Let $P=\{\p_1,\dots,\p_n\}$ be  a
finite collection of points in $\Omega^\circ$. Let $g$ be an $\Omega$-tropical series. 
\begin{definition}
\laber{def_vOmegaH}
Denote by $V(\Omega,P,f)$ the set of $\Omega$-tropical series $g$
such that $g|_{\Omega}\geq f$ and $g$ is not smooth at each of the points $\p\in P.$
\end{definition}
\begin{lemma}
\laber{lemma_Visnonempty}
The set $V(\Omega,P,f)$  is not empty. 
\end{lemma}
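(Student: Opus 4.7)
The plan is to build $g$ as a pointwise sum $g = f + t_1 + \dots + t_n$, where each $t_i$ is an auxiliary $\Omega$-tropical series that is non-smooth at $\p_i$. This reduces the lemma to (a) checking that such a pointwise sum is again an $\Omega$-tropical series and has corners at each $\p_i$, and (b) constructing one $t_\p$ for each $\p \in \Omega^\circ$ with a corner at $\p$.

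For (a), I would use Lemma~\ref{lemma_estimate} to note that on any compact subset of $\Omega^\circ$ each summand agrees with a genuine tropical polynomial, and the identity $\min_k A_k + \min_\ell B_\ell = \min_{k,\ell}(A_k + B_\ell)$ implies that the pointwise sum of finitely many tropical polynomials with integer slopes is again a tropical polynomial with integer slopes. So the sum is a tropical series in the sense of Definition~\ref{def_tropseries}; it is non-negative and vanishes on $\partial\Omega$ since each summand does, and Lemma~\ref{lemma_usualisomegatropical} then upgrades it to an $\Omega$-tropical series. Clearly $g \geq f$ because every $t_i \geq 0$. Non-smoothness at $\p_i$ is preserved because the active (gradient) set of the sum near $\p_i$ is the Minkowski sum of the active sets of the summands, and $t_i$ contributes at least two distinct gradients.

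For (b), I would fix $\p = (p_x, p_y) \in \Omega^\circ$ and choose a value $V$ large enough that the set $S_V := \{(i,j) \in \A_\Omega : l^{ij}_\Omega(\p) \leq V\}$ contains two lattice points with non-parallel direction vectors (this is possible since $\Omega$ is admissible, so by Proposition~\ref{prop_admissible} the set $\A_\Omega$ has enough non-collinear elements, and $S_V$ grows as $V$ increases). Then define coefficients
\[
a_{ij} = \max\bigl(-c_{ij},\ V - i p_x - j p_y\bigr)
\]
for each $(i,j) \in \A_\Omega$, and $t_\p(x,y) := \inf_{(i,j) \in \A_\Omega}(ix + jy + a_{ij})$. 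Each monomial is non-negative on $\Omega$ because $a_{ij}\ge -c_{ij}$. At $\p$, every monomial in $S_V$ gives exactly $V$, while those outside $S_V$ give $l^{ij}_\Omega(\p) > V$, so $t_\p(\p) = V$, realised by at least two non-parallel slopes — producing a corner at $\p$.

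The main obstacle, as in Lemma~\ref{lemma_lomegaiszero}, will be showing that $t_\p$ vanishes on $\partial\Omega$. The unmodified monomials (those in $\A_\Omega\setminus S_V$) contribute $l^{ij}_\Omega(z)$ at a boundary point $z$. A scaling argument — replacing any $(i,j)$ with $l^{ij}_\Omega(z) = 0$ by $(ki, kj)$ with $k$ so large that $k\,l^{ij}_\Omega(\p) > V$ — shows these scaled monomials lie outside $S_V$ and still vanish at $z$, so the infimum at $z$ is $0$; at boundary points whose only supporting lines have irrational slope, the continued-fraction construction from the proof of Lemma~\ref{lemma_lomegaiszero} supplies a sequence of unmodified monomials whose values at $z$ tend to $0$. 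The modified monomials are $\geq 0$ on $\Omega$, so they cannot pull the infimum below $0$, and continuity then yields $t_\p|_{\partial\Omega} = 0$. With such $t_{\p_i}$ in hand, the sum $g = f + \sum_i t_{\p_i}$ lies in $V(\Omega,P,f)$.
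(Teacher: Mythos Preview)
Your argument is correct, but it takes a considerably longer route than the paper's. The paper exploits the weighted distance function $l_\Omega$ already built in Section~\ref{sec_distance}: for each $\p\in P$ one simply sets $t_\p(z)=\min(l_\Omega(z),l_\Omega(\p))$, and then
\[
f'(z)=f(z)+\sum_{\p\in P}\min\bigl(l_\Omega(z),l_\Omega(\p)\bigr)
\]
lies in $V(\Omega,P,f)$. All the boundary work you redo in part~(b)---the scaling trick and the continued-fraction approximation---is exactly the content of Lemma~\ref{lemma_lomegaiszero}, which the paper can simply cite. Your custom $t_\p$ is in effect a variant of $\min(l_\Omega,\mathrm{const})$ with finitely many extra monomials lifted; this buys nothing here, and the paper's choice makes the lemma a one-liner. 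Your part~(a), on the other hand, spells out carefully why a sum of $\Omega$-tropical series is again one and why non-smoothness survives the sum; the paper leaves this implicit.

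One small slip: you claim $\A_\Omega$ contains two non-parallel directions, citing Proposition~\ref{prop_admissible}. That proposition only yields $\A_\Omega\neq\{(0,0)\}$; for the half-plane $\Omega=\{x\geq 0\}$ one has $\A_\Omega=\{(k,0):k\geq 0\}$, all collinear. Fortunately this does not break your proof: non-smoothness at $\p$ requires only two \emph{distinct} active gradients, and $(0,0)$ together with any nonzero $(i,j)\in\A_\Omega$ already provides that.
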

\begin{proof} Since $\Omega$ is admissible, $l_\Omega$ is well defined, and the function $$f'(z)=f(z)+\sum_{\p\in P}\min(l_\Omega(z),l_\Omega(\p))$$ belongs to $V(\Omega,P,f)$.  
\end{proof}
Clearly, if $f\geq g$ then $V(\Omega,P,f)\subset V(\Omega,P,g)$.

\begin{definition}
\laber{def_gp}
For a finite subset $P$ of $\Omega^\circ$ and an $\Omega$-tropical series $f$ we define an operator $G_P%:V(\Omega,\varnothing,g)\to V(\Omega,P,g)
$, given by $$G_P f(z)=\inf \{g(z)|g\in V(\Omega,P,f)\}.$$ If $P$ contains only one point $\p$ we write $G_\p$ instead of $G_{\{\p\}}$. 
\end{definition}
  
\begin{lemma}\laber{lem_gpmponotone}
Let $g$ and $f$ be two tropical series on $\Omega^\circ$ such that $g\leq f$ and $P\subset\Omega^\circ.$ Then $G_P g\leq G_P f$.
\end{lemma}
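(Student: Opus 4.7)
The plan is essentially to unpack the definition of $G_P$ and use the monotonicity of the infimum with respect to the set over which it is taken; the paper has already flagged the key containment just before Definition~\ref{def_gp}.

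First, I would observe that $V(\Omega,P,f)\subset V(\Omega,P,g)$. Indeed, pick any $h\in V(\Omega,P,f)$. By Definition~\ref{def_vOmegaH}, $h$ is an $\Omega$-tropical series, $h|_\Omega\geq f$, and $h$ fails to be smooth at each point of $P$. Since $g\leq f$ on $\Omega^\circ$ (and both vanish on $\partial\Omega$ as $\Omega$-tropical series), we have $h|_\Omega\geq f\geq g$, while the smoothness condition at the points of $P$ depends only on $h$ and $P$, not on $f$ or $g$. Hence $h\in V(\Omega,P,g)$.

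Next, taking pointwise infimum over a larger family can only decrease the value: for every $z\in\Omega$,
\[
G_P g(z)=\inf\{h(z)\mid h\in V(\Omega,P,g)\}\leq \inf\{h(z)\mid h\in V(\Omega,P,f)\}=G_P f(z),
\]
which is the desired inequality.

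There is essentially no obstacle here: the statement is a direct consequence of the fact that the defining constraint on $V(\Omega,P,\cdot)$ is monotone in its third argument (with the smoothness-at-$P$ condition being independent of it), combined with the elementary order-reversing behaviour of the infimum under set inclusion. The only thing worth double-checking is that the boundary condition $h|_{\partial\Omega}=0$ (built into the notion of $\Omega$-tropical series via Definition~\ref{def_tropicalanalitycal}) is preserved, but this is automatic since $h$ is already taken to be an $\Omega$-tropical series.
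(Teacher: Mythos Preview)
Your proof is correct and follows essentially the same idea as the paper's: the paper observes that $G_P f\geq f\geq g$ and that $G_P f$ is not smooth at $P$, so $G_P f\in V(\Omega,P,g)$ and hence $G_P g\leq G_P f$ by definition. Your version via the containment $V(\Omega,P,f)\subset V(\Omega,P,g)$ (already noted before Definition~\ref{def_gp}) is a mild repackaging of the same argument, with the small advantage that it does not require knowing in advance that the infimum $G_P f$ itself lies in $V(\Omega,P,f)$.
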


\begin{proof}
Indeed, $G_P f\geq f\geq g$ and $G_P f$ is not smooth at $P.$ Therefore, $G_P g\leq G_P f$ by definition of $G_P g.$ 
\end{proof}

  \begin{figure}[h]
    \centering
   \begin{tikzpicture}[scale=0.9]
    \centering
\draw[thick, dashed](0,0)--++(0,2)--++(1.4,0)--++(0.6,-0.6)--++(0,-1.4)--++(-2,0);

\draw[very thick](0,0)--++(0.4,0.4)--++(0,1.2)--++(1,0)--++(0.2,-0.2)--++(0,-1)--++(-1.2,0);
\draw[very thick](0,2)--++(0.4,-0.4);
\draw[very thick](1.4,2)--++(0,-0.4);
\draw[very thick](2,1.4)--++(-0.4,0);
\draw[very thick](2,0)--++(-0.4,0.4);
\draw(0.4,1)node{$\bullet$};
\draw(0.4,1)node[right]{$p$};

\begin{scope}[xshift=110]
\draw[thick, dashed](0,0)--++(0,2)--++(1.4,0)--++(0.6,-0.6)--++(0,-1.4)--++(-2,0);

\draw[very thick](0,0)--++(0.6,0.6)--++(0,0.8)--++(0.8,0)--++(0,-0.8)--++(-0.8,0);
\draw[very thick](0,2)--++(0.6,-0.6);
\draw[very thick](1.4,2)--++(0,-0.6);
\draw[very thick](2,1.4)--++(-0.6,0);
\draw[very thick](2,0)--++(-0.6,0.6);
\draw(0.6,1)node{$\bullet$};
\draw(0.6,1)node[right]{$p$};
\end{scope}

\begin{scope}[xshift=220]
\draw[thick,dashed](0,0)--++(0,2)--++(1.4,0)--++(0.6,-0.6)--++(0,-1.4)--++(-2,0);

\draw[very thick](0,0)--++(0.8,0.8)--++(0,0.4)--++(0.4,0)--++(0,-0.4)--++(-0.4,0);
\draw[very thick](0,2)--++(0.8,-0.8);
\draw[very thick](1.4,2)--++(0,-0.6);
\draw[very thick](2,1.4)--++(-0.6,0);
\draw[very thick](2,0)--++(-0.8,0.8);
\draw[very thick](1.4,1.4)--++(-0.2,-0.2);
\draw(0.8,1)node{$\bullet$};
\draw(0.8,1)node[left]{$p$};
\end{scope}

\begin{scope}[xshift=330]
\draw[thick,dashed](0,0)--++(0,2)--++(1.4,0)--++(0.6,-0.6)--++(0,-1.4)--++(-2,0);
\draw[very thick](2,0)--++(-2,2);
\draw[very thick](1.4,2)--++(0,-0.6);
\draw[very thick](2,1.4)--++(-0.6,0);
\draw[very thick](1.4,1.4)--++(-1.4,-1.4);
\draw(1,1)node{$\bullet$};
\draw(1,0.65)node{$p$};
\end{scope}

\begin{scope}[yshift=-80]
\draw[very thick](0,1)--++(1,1)--++(1,0)--++(0,-1)--++(-1,-1)--++(-1,1);
\draw[thick](0,1)--++(2,0);
\draw[thick](1,0)--++(0,2);
\draw[thick](1,1)--++(1,1);
\draw(0,1)node{$\bullet$};
\draw(1,0)node{$\bullet$};
\draw(2,1)node{$\bullet$};
\draw(1,2)node{$\bullet$};
\draw(2,2)node{$\bullet$};
\draw(1,1)node{$\bullet$};

\begin{scope}[xshift=110]
\draw[very thick](0,1)--++(1,1)--++(1,0)--++(0,-1)--++(-1,-1)--++(-1,1);
\draw[thick](0,1)--++(2,0);
\draw[thick](1,0)--++(0,2);
\draw(0,1)node{$\bullet$};
\draw(1,0)node{$\bullet$};
\draw(2,1)node{$\bullet$};
\draw(1,2)node{$\bullet$};
\draw(2,2)node{$\bullet$};
\draw(1,1)node{$\bullet$};
\end{scope}

\begin{scope}[xshift=220]
\draw[very thick](0,1)--++(1,1)--++(1,0)--++(0,-1)--++(-1,-1)--++(-1,1);
\draw[thick](0,1)--++(2,0);
\draw[thick](1,0)--++(0,2);
\draw[thick](1,2)--++(1,-1);
\draw(0,1)node{$\bullet$};
\draw(1,0)node{$\bullet$};
\draw(2,1)node{$\bullet$};
\draw(1,2)node{$\bullet$};
\draw(2,2)node{$\bullet$};
\draw(1,1)node{$\bullet$};
\end{scope}

\begin{scope}[xshift=330]
\draw[very thick](0,1)--++(1,1)--++(1,0)--++(0,-1)--++(-1,-1)--++(-1,1);
\draw[thick](1,2)--++(1,-1);
\draw(0,1)node{$\bullet$};
\draw(1,0)node{$\bullet$};
\draw(2,1)node{$\bullet$};
\draw(1,2)node{$\bullet$};
\draw(2,2)node{$\bullet$};
\draw(1,1)node{$\bullet$};
\draw (1,2)node[left]{$(0,-1)$};
\draw (1,0)node[left]{$(0,1)$};
\draw (0,1)node[left]{$(1,0)$};
\draw (2,1)node[right]{$(0,-1)$};
\draw (2,2)node[right]{$(-1,-1)$};
\draw (1,1)node[below]{$(0,0)$};

\end{scope}
\end{scope}

\end{tikzpicture}
    \caption{ First row shows how curves given by $G_p 0_\Omega$ depend on the position of the point in the pentagon $\Omega$. The second row shows monomials in their minimal canonical form. Note that the coordinate axes of the second row are actually reversed. Each lattice point on a below picture represents a face where the corresponding monomial is dominating on a top picture, see the bottom-right picture.} 
    \label{fig_onetroppoint}
\end{figure}
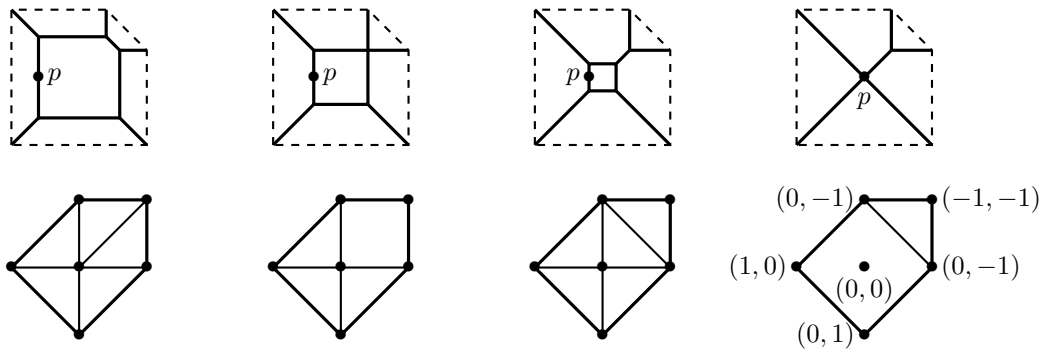
%\add{probably delete the second row}  
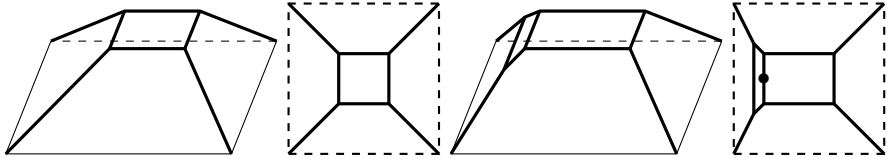
\begin{figure}[htbp]
%\begin{subfigure}{0.24\textwidth}
%\includegraphics[width=1\linewidth, height=2cm]{Piramide_tropical_curve.pdf}
%\end{subfigure}
%\begin{subfigure}{0.24\textwidth}
%\includegraphics[width=0.7\linewidth, height=2cm]{tropical_curve.pdf}
%\end{subfigure}\begin{subfigure}{0.24\textwidth}
%\includegraphics[width=1\linewidth, height=2cm]{Piramide_tropical_curve_2.png}
%\end{subfigure}
%\begin{subfigure}{0.24\textwidth}
%\includegraphics[width=0.7\linewidth, height=2cm]{tropical_curve_2.pdf}
%\end{subfigure}
\begin{tikzpicture}
[x={(0.5cm,0cm)}, y= {(0.1cm,0.25cm)}, z={(0.09cm,0.45cm)}, scale=6]
%\draw (0,0,0)--(1,0,0)--(1,1,0)--(0,1,0)--(0,0,0);
\draw (0,1,0)--(0,0,0)--(1,0,0)--(1,1,0);
\draw[dashed](1,1,0)--(0,1,0);
\draw[very thick] (0,0,0)--(1/3,1/3,1/3)--(2/3,1/3,1/3)--(1,0,0);
\draw[very thick] (0,1,0)--(1/3,2/3,1/3)--(2/3,2/3,1/3)--(1,1,0);
\draw[very thick] (1/3,1/3,1/3)--(1/3,2/3,1/3);
\draw[very thick] (2/3,1/3,1/3)--(2/3,2/3,1/3);
\end{tikzpicture}
\begin{tikzpicture}[scale=2]
\draw[very thick] (0,0)--(1/3,1/3)--(2/3,1/3)--(1,0);
\draw[very thick] (0,1)--(1/3,2/3)--(2/3,2/3)--(1,1);
\draw[very thick] (1/3,1/3)--(1/3,2/3);
\draw[very thick] (2/3,1/3)--(2/3,2/3);
\draw[thick, dashed](0,0)--(1,0)--(1,1)--(0,1)--cycle;
\end{tikzpicture}
\begin{tikzpicture}
[x={(0.5cm,0cm)}, y= {(0.1cm,0.25cm)}, z={(0.09cm,0.45cm)}, scale=6]
%\draw (0,0,0)--(1,0,0)--(1,1,0)--(0,1,0)--(0,0,0);
\draw (0,1,0)--(0,0,0)--(1,0,0)--(1,1,0);
\draw[dashed](1,1,0)--(0,1,0);
\draw[very thick] (0,0,0)--(2/15,4/15,4/15)--(1/5,1/3,1/3)--(2/3,1/3,1/3)--(1,0,0);
\draw[very thick] (0,1,0)--(2/15,11/15,4/15)--(1/5,2/3,1/3)--(2/3,2/3,1/3)--(1,1,0);
\draw[very thick] (1/5,1/3,1/3)--(1/5,2/3,1/3);
\draw[very thick] (2/3,1/3,1/3)--(2/3,2/3,1/3);
\draw[very thick] (2/15,4/15,4/15)--(2/15,11/15,4/15);
\end{tikzpicture}
\begin{tikzpicture}[scale=2]
\draw[very thick] (1,0)--(2/3,1/3)--(1/5,1/3)--(2/15,4/15)--(0,0);
\draw[very thick] (1,1)--(2/3,2/3)--(1/5,2/3)--(2/15,11/15)--(0,1);
\draw[very thick] (2/3,1/3)--(2/3,2/3);
\draw[very thick] (1/5,1/3)--(1/5,2/3);
\draw[very thick] (2/15,4/15)--(2/15,11/15);
\draw[thick, dashed] (0,0)--(1,0)--(1,1)--(0,1)--cycle;
\draw (1/5,1/2) node {$\bullet$};
\end{tikzpicture}
\caption{On the left: $\Omega$-tropical series $\min(x,y,1-x,1-y,1/3)$ and the corresponding tropical curve. On the right: the result of applying $G_{(\frac{1}{5},\frac{1}{2})}$ to the left picture. The new $\Omega$-tropical series is $\min(2x,x + \frac{2}{15},y,1-x,1-y,\frac{1}{3})$ and the corresponding tropical curve is presented on the right. The fat point is $(\frac{1}{5},\frac{1}{2})$. Note that there appears a new face where $2x$ is the dominating monomial.}
\label{fig_3dpicture}
\end{figure}

In Lemma~\ref{lem_singlegp} we prove that each individual $G_\p$ simply contracts a face of a tropical curve $C(f)$ until $C(G_\p f)$ passes through $\p$, see Figure~\ref{fig_ShrinkPhi}. In Proposition~\ref{prop_gpsconvergence} we will prove that $G_P$ can be obtained as the limit of repetitive applications $G_\p$ for $\p\in P$. %The following lemma implies that $G_P$ is well-defined.

We denote by $0_\Omega$ the function $f\equiv 0$ on $\Omega$.

\begin{lemma}
For $\p\in\Omega^\circ$ we have $G_{\p} 0_\Omega (z) = \min(l_\Omega(z),l_\Omega(\p))$.
\end{lemma}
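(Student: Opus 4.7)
I will prove the two inequalities separately. For the upper bound $G_\p 0_\Omega \leq \min(l_\Omega, l_\Omega(\p))$ I exhibit the right-hand side as a specific element of $V(\Omega,\{\p\},0_\Omega)$, so that the definition of $G_\p$ immediately yields the inequality. For the reverse inequality I take an arbitrary $g \in V(\Omega,\{\p\},0_\Omega)$, write it in canonical form, and compare its monomials with $l_\Omega$ and with the constant $l_\Omega(\p)$.

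\textbf{Upper bound.} Set $h := \min(l_\Omega, l_\Omega(\p))$. By Lemma~\ref{lemma_lomegaiszero} the function $l_\Omega$ is an $\Omega$-tropical series, and adjoining a single constant monomial with coefficient $a_{00} := l_\Omega(\p) \geq 0$ to its representation makes $h$ into another $\Omega$-tropical series whose boundary values are $\min(0, l_\Omega(\p)) = 0$. To verify non-smoothness at $\p$, I recall from Lemma~\ref{lemma_estimate} that near $\p$ the series $l_\Omega$ is the minimum of finitely many affine functions $l^{ij}_\Omega$ with $(i,j)\neq(0,0)$, so every locally competing monomial has non-zero gradient. Being concave with no zero-slope local pieces, $l_\Omega$ cannot attain a smooth local maximum at $\p$, so points arbitrarily close to $\p$ satisfy $l_\Omega < l_\Omega(\p)$ in some direction and $l_\Omega > l_\Omega(\p)$ in the opposite one; capping by the constant $l_\Omega(\p)$ therefore leaves a corner at $\p$. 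Hence $h \in V(\Omega,\{\p\},0_\Omega)$ and $G_\p 0_\Omega \leq h$.

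\textbf{Lower bound.} Let $g \in V(\Omega,\{\p\},0_\Omega)$, and write the canonical form $g(x,y) = \min_{(i,j)\in\A_\Omega}(ix + jy + a_{ij})$ from Definition~\ref{def_canonicalseries}. Non-negativity of $g$ on $\Omega$ forces $a_{ij} \geq -c_{ij}$ for every $(i,j)\neq(0,0)$, hence by Remark~\ref{rem_lomegaestimate}
\[
ix + jy + a_{ij} \;\geq\; l^{ij}_\Omega(x,y) \;\geq\; l_\Omega(x,y) \quad \text{on } \Omega.
\]
By Lemma~\ref{lemma_estimate} only finitely many monomials locally compete near $\p$, so non-smoothness of $g$ at $\p$ forces at least two of them to realise $g(\p)$, and at least one such monomial is non-constant (otherwise $g$ would equal the constant $a_{00}$ throughout a neighbourhood of $\p$, contradicting the corner at $\p$); that non-constant monomial forces $g(\p) \geq l_\Omega(\p)$ via the displayed inequality. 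Since $a_{00}$ is one of the monomials of the canonical form, $a_{00} \geq g(\p) \geq l_\Omega(\p)$ as well, and
\[
g(x,y) = \min\!\Bigl( a_{00},\; \min_{(i,j)\neq(0,0)}(ix + jy + a_{ij}) \Bigr) \;\geq\; \min\bigl(l_\Omega(\p),\, l_\Omega(x,y)\bigr).
\]
Taking the infimum over $g$ yields $G_\p 0_\Omega \geq \min(l_\Omega, l_\Omega(\p))$, which combined with the upper bound closes the proof.

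\textbf{Main obstacle.} The only non-routine step is converting the geometric statement ``$g$ is not smooth at $\p$'' into the algebraic one ``some non-constant monomial realises $g(\p)$''. This is exactly where the local finiteness of Lemma~\ref{lemma_estimate} is indispensable: without it one could not rule out pathological infinite families of monomials that tie the minimum at $\p$ only through the constant term. Everything else is bookkeeping with Remark~\ref{rem_lomegaestimate}, the canonical form, and monotonicity of $\min$.
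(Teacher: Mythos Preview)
Your proof is correct and follows the same route as the paper's: bound the non-constant coefficients of the canonical form from below via Remark~\ref{rem_lomegaestimate} and the constant coefficient $a_{00}$ via non-smoothness at $\p$, while the upper bound (which the paper leaves implicit) comes from exhibiting $\min(l_\Omega,l_\Omega(\p))$ as an element of $V(\Omega,\{\p\},0_\Omega)$. One small wrinkle in your upper-bound argument: the claim that points near $\p$ satisfy $l_\Omega>l_\Omega(\p)$ in some direction fails when $\p$ happens to be a (necessarily non-smooth) local maximum of $l_\Omega$, but in that case $h=l_\Omega$ near $\p$ and inherits the corner directly, so the conclusion still holds.
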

\begin{proof}
Indeed, all the coefficients, except $a_{00}$, in the canonical form of $G_{\p} 0_\Omega$ can not be less than in $l_{\Omega}$ by Remark~\ref{rem_lomegaestimate}, and if $a_{00}$ were less than $l_\Omega(\p)$, then the function would be smooth at $\p$.
\end{proof}

\begin{proposition} 
\laber{prop_upperbound}
For any $z\in\Omega$ and $P=\{\p_1,\dots,\p_n\}$ the following inequality holds
$$G_P0_\Omega \leq n\cdot l_\Omega(z).\vv$$
\end{proposition}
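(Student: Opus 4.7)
The plan is to simply exhibit an element of $V(\Omega,P,0_\Omega)$ whose value at every $z\in\Omega$ is bounded above by $n\cdot l_\Omega(z)$, and then invoke the definition of $G_P$ as an infimum over that set.

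The candidate is already handed to us by the proof of Lemma~\ref{lemma_Visnonempty}: with $f=0_\Omega$, the function
\[
f'(z)=\sum_{\p\in P}\min\bigl(l_\Omega(z),\,l_\Omega(\p)\bigr)
\]
lies in $V(\Omega,P,0_\Omega)$. Each summand is non-negative on $\Omega$, vanishes on $\partial\Omega$ (since $l_\Omega$ does, by Lemma~\ref{lemma_lomegaiszero}), and is non-smooth at the corresponding $\p$; the whole sum is therefore an $\Omega$-tropical series that is non-smooth at every point of $P$.

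Now I would bound $f'$ termwise. For every $\p\in P$ and every $z\in\Omega$ one has $\min(l_\Omega(z),l_\Omega(\p))\leq l_\Omega(z)$, so summing over the $n$ points of $P$ gives $f'(z)\leq n\cdot l_\Omega(z)$. By Definition~\ref{def_gp}, $G_P 0_\Omega(z)=\inf\{g(z):g\in V(\Omega,P,0_\Omega)\}\leq f'(z)$, which yields the desired inequality.

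There is essentially no obstacle here: the only thing to check carefully is that the construction from Lemma~\ref{lemma_Visnonempty} really produces a member of $V(\Omega,P,0_\Omega)$ (non-negativity, vanishing on $\partial\Omega$, $\Omega$-tropical series property, and corners at each $\p_i$), but this is exactly what that lemma records. The estimate itself is a one-line consequence of $\min(a,b)\leq a$.
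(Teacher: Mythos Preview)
Your proof is correct and follows essentially the same route as the paper: exhibit the sum $\sum_{\p\in P}\min(l_\Omega(z),l_\Omega(\p))$ as a member of $V(\Omega,P,0_\Omega)$ and bound each summand by $l_\Omega(z)$. The paper phrases this via $G_\p 0_\Omega=\min(l_\Omega(z),l_\Omega(\p))$ rather than citing Lemma~\ref{lemma_Visnonempty}, but the argument is identical.
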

\begin{proof}
For each point $\p\in P$ we consider the function $(G_\p 0_\Omega)(z)=\min(l_\Omega(z),l_\Omega(\p))$, which is not smooth at $\p$ and $(G_\p 0_\Omega)|_{\partial\Omega}=0$. Finally, $$G_P0_\Omega \leq \sum_{\p\in P}G_\p 0_\Omega\leq n\cdot l_\Omega.$$
\end{proof}

\begin{lemma}%[\citepage{proof_tropicalseries}]
\laber{lemma_tropicalseries}
If $f$ is an $\Omega$-tropical series, then $G_P f$ is an $\Omega$-tropical series. \vv
\end{lemma}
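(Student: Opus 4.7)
The strategy is to exhibit $G_Pf$ directly in the infimum form~\eqref{eq_omegatrop}. For each $g\in V(\Omega,P,f)$ I would write its canonical form (Definition~\ref{def_canonicalseries}),
\[
g(z)=\min_{(i,j)\in\A_\Omega}(ix+jy+b^g_{ij}),
\]
and set $A_{ij}=\inf_{g\in V(\Omega,P,f)}b^g_{ij}$. Because $ix+jy+b^g_{ij}\ge g\ge 0$ on $\Omega$, every canonical coefficient satisfies $b^g_{ij}\ge -c_{ij}$, so $A_{ij}\in[-c_{ij},\infty)\subset\RR$ (and $V(\Omega,P,f)$ is non-empty by Lemma~\ref{lemma_Visnonempty}, so the infimum is taken over a non-empty set). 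I then define $h(z)=\inf_{(i,j)\in\A_\Omega}(ix+jy+A_{ij})$ and claim $h=G_Pf$.

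The equality $h=G_Pf$ follows from two monotonicity manipulations of infima. For $h\le G_Pf$: since $A_{ij}\le b^g_{ij}$ for every $g\in V(\Omega,P,f)$ and every $(i,j)$, I conclude $h\le g$ pointwise, so $h\le\inf_{g}g=G_Pf$. For $G_Pf\le h$: for every $g$ and every $(i,j)$ the canonical form gives $G_Pf\le g\le ix+jy+b^g_{ij}$; taking the infimum first over $g$ yields $G_Pf\le ix+jy+A_{ij}$, and then over $(i,j)$ yields $G_Pf\le h$.

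It remains to verify the conditions of Definition~\ref{def_tropicalanalitycal} for $h$. The representation of $h$ as an infimum of linear functions with real coefficients indexed by a subset of $\ZZ^2$ is built into the construction. Non-negativity $h\ge 0$ is inherited from $h=G_Pf\ge f\ge 0$. For the boundary condition $h|_{\partial\Omega}=0$, I would pick any particular $g\in V(\Omega,P,f)$: then $0\le h\le g=0$ on $\partial\Omega$.

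The main obstacle, and really the only step requiring work beyond manipulating definitions, is confirming that the infimum in the definition of $h$ is locally realised by finitely many monomials on $\Omega^\circ$; this is what makes $h$ a continuous tropical series in the sense of Definition~\ref{def_tropseries}. I would handle it exactly as in the proof of Lemma~\ref{lemma_welldefinedseries}: on any compact $K\subset\Omega^\circ$, the function $h$ is bounded above by $\max_K g$ for any fixed $g\in V(\Omega,P,f)$, and Lemma~\ref{lemma_estimate} then confines the monomials $(i,j)$ whose term $ix+jy+A_{ij}$ can come close to $h$ somewhere on $K$ to a finite set. Outside this finite set the term is strictly larger on $K$, so the infimum is attained as a minimum and $h$ is locally a tropical polynomial.
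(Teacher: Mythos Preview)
Your proof is correct and follows essentially the same idea as the paper's: both take the coefficient-wise infimum over $V(\Omega,P,f)$ and invoke Lemma~\ref{lemma_estimate} to obtain the local finiteness of contributing monomials. The organizational difference is that the paper works locally on compact $K\subset\Omega^\circ$, shows $G_Pf|_K$ is a tropical polynomial, and then appeals to Lemma~\ref{lemma_usualisomegatropical}; for the boundary condition it passes through Proposition~\ref{prop_upperbound} ($G_Pf\le f+n\cdot l_\Omega$) and Lemma~\ref{lemma_lomegaiszero}. Your route is slightly more economical: by working with the canonical form on all of $\A_\Omega$ you produce the global infimum representation~\eqref{eq_omegatrop} directly, and your sandwich $0\le h\le g$ with a single $g\in V(\Omega,P,f)$ handles the boundary without the detour through $l_\Omega$.
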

\begin{proof}%[Proof of Lemma~\ref{lemma_tropicalseries}]
\label{proof_tropicalseries}

Let $g\in V(\Omega,P,f)$, $z_0\in \Omega^\circ$ and $K\subset\Omega^\circ$ be a compact set such that $z_0\in K^\circ$. Denote by $\C>0$ the maximum of $g$ on $K$. Consider the set $\mathcal{M}$  of all $(i,j)\in\ZZ^2$ for which there exist $d\in\RR,(x_0,y_0)\in K$ such that $0\leq (xi+yj+d)|_{\Omega^\circ}, ix_0+jy_0+d\leq  \C.$ The set $\mathcal{M}$ is finite by Lemma~\ref{lemma_estimate}. Therefore, the restriction of any tropical series $g\in V(\Omega,P,f)$ to $K$ can be expressed as a tropical polynomial $\min_{(i,j)\in\mathcal{M}}(ix+jy+a_{ij}(g))$. In particular, if we denote by $a_{ij}$ the infimum of $a_{ij}(g)$ for all $g\in V(\Omega,P,f)$ then $$G_P f|_K=\min_{(i,j)\in\mathcal{M}}(ix+jy+a_{ij}),$$
so $G_Pf$ is a tropical series.
 
It follows from Proposition~\ref{prop_upperbound}, that $G_P f \leq f+ n\cdot l_\Omega$. Then, $l_\Omega|_{\partial\Omega}=0$ by Lemma~\ref{lemma_lomegaiszero}.  Therefore $G_P f|_{\partial\Omega}=0$ and, thus, Lemma~\ref{lemma_usualisomegatropical} concludes the proof that $G_Pf$ is an $\Omega$-tropical series.
\end{proof}

\begin{definition}
\laber{def_add}
For an $\Omega$-tropical series $f$ in the canonical form (see \eqref{eq_series}, Definition~\ref{def_canonicalseries}), $(k,l)\in\A$, and $c\geq 0$ and  we denote by $\Add_{kl}^c f$ the $\Omega$-tropical series $$(\Add_{kl}^c f) (x,y)=\min\left(a_{kl}+c+kx+ly,\min\limits_{\substack{{(i,j)\in\A} \\ {(i,j)\ne (k,l)}}}(a_{ij}+ix+jy)\right).$$		  
\end{definition}

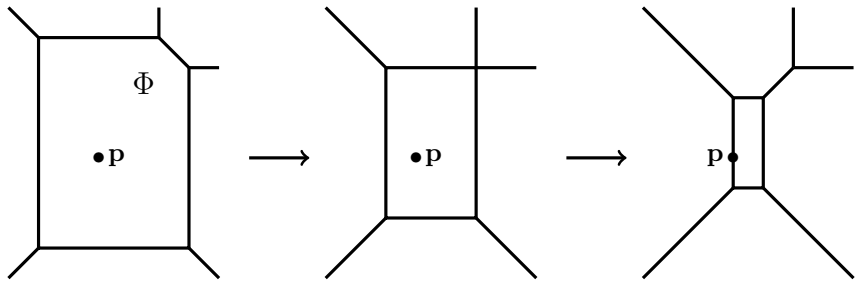
\begin{figure}[h]
    \centering

\begin{tikzpicture}[scale=0.4]
\draw[very thick](0,0)--++(1,1)--++(0,7)--++(4,0)--++(1,-1)--++(0,-6)--++(-5,0);
\draw[very thick](1,8)--++(-1,1);
\draw[very thick](5,8)--++(0,1);
\draw[very thick](6,7)--++(1,0);
\draw[very thick](6,1)--++(1,-1);
\draw(3,4)node{$\bullet$};
\draw(3,4)node[right]{$\p$};
\draw(4.5,6.5)node{\Large$\Phi$};
\draw[->][very thick](8,4)--(10,4);

\begin{scope}[xshift=300]
\draw[very thick](0,0)--++(2,2)--++(0,5)--++(3,0)--++(0,-5)--++(-3,0);
\draw[very thick](2,7)--++(-2,2);
\draw[very thick](5,7)--++(0,2);
\draw[very thick](5,7)--++(2,0);
\draw[very thick](5,2)--++(2,-2);
\draw(3,4)node{$\bullet$};
\draw(3,4)node[right]{$\p$};
\draw[->][very thick](8,4)--(10,4);
\end{scope}

\begin{scope}[xshift=600]
\draw[very thick](0,0)--++(3,3)--++(0,3)--++(1,0)--++(0,-3)--++(-1,0);
\draw[very thick](3,6)--++(-3,3);
\draw[very thick](5,7)--++(0,2);
\draw[very thick](5,7)--++(2,0);
\draw[very thick](5,7)--++(-1,-1);
\draw[very thick](4,3)--++(3,-3);
\draw(3,4)node{$\bullet$};
\draw(3,4)node[left]{$\p$};
\end{scope}

\end{tikzpicture}
\caption{Illustration for Remark~\ref{rem_smooth}. The operator $G_\p$ shrinks the face $\Phi$ where $\p$ belongs to. Firstly, $t=0$, then $t=0.5$, and finally $t=1$ in $\Add_{ij}^{ct}f$. Note that combinatorics of the curve can change when $t$ goes from $0$ to $1$.} 
\label{fig_ShrinkPhi}
\end{figure}

\begin{lemma}
\laber{lem_singlegp}
Let $f = \min_{(i,j)\in\A_\Omega}(ix+jy+a_{ij})$ be an $\Omega$-tropical series in the canonical form, suppose that $\p=(x_0,y_0)\in\Omega^\circ\setminus C(f)$. Suppose that $f$ is equal to $kx+ly+a_{kl}$ near $\p$.  
Consider the function 
\begin{equation}
\label{eq_gp}
f'(x,y) = \min_{(i,j)\in\A_\Omega, (i,j)\ne(k,l)}(ix+jy+a_{ij}).
\end{equation} Then, $G_\p f = \Add_{kl}^c f$ with $c=f'(\p)-kx_0-ly_0$.
\end{lemma}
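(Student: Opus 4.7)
My plan is to sandwich $G_\p f$: first show that $\Add_{kl}^c f \in V(\Omega,\{\p\},f)$, and second show that $\Add_{kl}^c f \le g$ for every $g \in V(\Omega,\{\p\},f)$. Since $G_\p f$ is defined as the infimum of $V(\Omega,\{\p\},f)$, these two facts together force $G_\p f = \Add_{kl}^c f$. Note first that the hypothesis $\p \in \Omega^\circ \setminus C(f)$ guarantees that near $\p$ only the $(k,l)$-monomial achieves the minimum of $f$, so $f(\p)=kx_0+ly_0+a_{kl}<f'(\p)$, i.e.\ the shift $c$ is strictly positive.

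\textbf{Membership.} Three checks are needed for $\Add_{kl}^c f \in V(\Omega,\{\p\},f)$. First, $\Add_{kl}^c f$ is an $\Omega$-tropical series: it is the minimum over the unchanged index set $\A_\Omega$ of monomials in which only one coefficient has been raised by $c\ge 0$, so nonnegativity on $\Omega$ is retained, and the boundary condition $\Add_{kl}^c f|_{\partial\Omega}=0$ is inherited from the remaining unchanged monomials together with Lemma~\ref{lemma_lomegaiszero} (and Lemma~\ref{lemma_usualisomegatropical} to lift a continuous interior series across $\partial\Omega$). Second, $\Add_{kl}^c f \ge f$ pointwise, because raising one coefficient can only increase the minimum. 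Third, $c$ is arranged precisely so that at $\p$ the shifted $(k,l)$-monomial equals $f'(\p)$, the minimum of the other monomials at $\p$; thus at $\p$ at least two monomials of $\Add_{kl}^c f$ attain the minimum, forcing non-smoothness there.

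\textbf{Lower bound.} Take any $g \in V(\Omega,\{\p\},f)$ and write it in canonical form with coefficients $b_{ij}$. The main input is monotonicity of canonical coefficients: from the characterization $b_{ij}=\sup_{(x,y)\in\Omega}(g(x,y)-ix-jy)$ (and similarly for $a_{ij}$), the hypothesis $g\ge f$ yields $b_{ij}\ge a_{ij}$ for every $(i,j)\in\A_\Omega$. To conclude $g\ge \Add_{kl}^c f$, it remains to strengthen this to $b_{kl}\ge a_{kl}+c$. Suppose for contradiction $b_{kl}<a_{kl}+c$; then at $\p$ the $(k,l)$-monomial of $g$ evaluates to a value strictly less than $f'(\p)$, while every other monomial $ix+jy+b_{ij}$ at $\p$ satisfies $ix_0+jy_0+b_{ij}\ge ix_0+jy_0+a_{ij}\ge f'(\p)$. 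By Lemma~\ref{lemma_estimate}, only finitely many monomials are relevant on a small compact neighborhood of $\p$, so the $(k,l)$-monomial of $g$ strictly dominates on such a neighborhood and $g$ would be smooth at $\p$, contradicting $g\in V(\Omega,\{\p\},f)$. Combining with the other inequalities gives $g \ge \Add_{kl}^c f$ pointwise, and taking the infimum over $V(\Omega,\{\p\},f)$ closes the sandwich.

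\textbf{Main obstacle.} The subtlest step is the translation between the geometric condition ``$g$ is not smooth at $\p$'' and the combinatorial condition ``at least two canonical monomials of $g$ tie at $\p$''. This rests on the local finiteness supplied by Lemma~\ref{lemma_estimate}, which turns the local study of $g$ near $\p$ into a standard finite tropical polynomial picture, where the single strictly-dominant monomial argument is routine.
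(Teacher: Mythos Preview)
Your proof is correct and follows essentially the same sandwich argument as the paper's (much terser) proof: show $\Add_{kl}^c f\in V(\Omega,\{\p\},f)$ for the upper bound, then show any $g\in V(\Omega,\{\p\},f)$ must have its $(k,l)$-coefficient at least $a_{kl}+c$ (else $g$ would be smooth at $\p$) for the lower bound. The paper compresses your lower-bound step into ``$f$ and $G_\p f$ differ only at one monomial'' plus the smoothness observation, but the content is identical.

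One remark: both you and the paper are slightly loose about the value of $c$. As stated, $c=f'(\p)-kx_0-ly_0$ makes the shifted $(k,l)$-monomial at $\p$ equal to $a_{kl}+f'(\p)$ rather than $f'(\p)$; the intended increment is $c=f'(\p)-f(\p)=f'(\p)-kx_0-ly_0-a_{kl}$. This is a typo in the lemma statement (the paper's own proof writes the shifted monomial as $kx+ly+c$ with coefficient $c$, not $a_{kl}+c$), and your argument is correct once read with the intended value.
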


\begin{proof} $G_\p(f)$ is at most $\min\left(f',kx+ly+(f'(\p)-kx_0-ly_0)\right)$ by definition. Therefore $f$ and $G_pf$ differ only at one monomial. Also, direct calculation shows that $\min(f',kx+ly+c)$ is smooth at $\p$ as long as $c< f'(\p)-kx_0-ly_0$, which finishes the proof.
\end{proof}

\begin{corollary}
In the notation of Definition~\ref{def_generalweighteddistance}, for a point $\p\in\Omega^\circ$, for each $z\in\Omega$ we have $$(G_\p 0_\Omega)(z)=\min\{l_\Omega(z),l_\Omega(\p)\}.$$ 
\end{corollary}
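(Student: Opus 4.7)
The plan is to apply Lemma~\ref{lem_singlegp} directly to $f = 0_\Omega$. First I would write down the canonical form of $0_\Omega$ as in Definition~\ref{def_canonicalseries}: since each monomial $ix+jy+a_{ij}$ must be non-negative on $\Omega$ while the coefficients are chosen minimal, I expect $a_{00} = 0$ and $a_{ij} = -c_{ij}$ for $(i,j) \in \A_\Omega \setminus \{(0,0)\}$, so that $ix + jy + a_{ij}$ is literally $l^{ij}_\Omega(x,y)$. In particular, the minimum of the non-constant monomials is $l_\Omega$ by Definition~\ref{def_generalweighteddistance}.

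Next I would verify the hypotheses of Lemma~\ref{lem_singlegp}. Since $0_\Omega \equiv 0$ is smooth everywhere, $\p \notin C(0_\Omega)$ automatically, and the monomial dominating near $\p$ must be the constant $a_{00} = 0$; this is forced by the fact that every $l^{ij}_\Omega$ with $(i,j)\neq(0,0)$ is strictly positive on $\Omega^\circ$ (it vanishes only where $ix+jy$ attains its infimum $c_{ij}$ on $\partial\Omega$). So we are in the setting of the lemma with $(k,l)=(0,0)$ and $a_{kl}=0$.

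Finally, I would compute: the function $f'$ appearing in Lemma~\ref{lem_singlegp} becomes $\min_{(i,j)\neq(0,0)} l^{ij}_\Omega = l_\Omega$, and the constant produced by the lemma is $c = f'(\p) - 0\cdot x_0 - 0 \cdot y_0 = l_\Omega(\p)$. Plugging into Definition~\ref{def_add} gives $\Add_{00}^{c} 0_\Omega = \min(l_\Omega(\p),\, l_\Omega(z))$, which is exactly the claim. The only step requiring real care is the identification of the dominating monomial at $\p$ in the canonical form of $0_\Omega$; once that is settled, the rest is routine bookkeeping within Definitions~\ref{def_add} and~\ref{def_generalweighteddistance}.
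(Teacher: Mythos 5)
Your proposal is correct and takes essentially the same route as the paper: the corollary is meant as an immediate application of Lemma~\ref{lem_singlegp} to $f=0_\Omega$, and your identification of the canonical form of $0_\Omega$ (monomials $l^{ij}_\Omega$, constant monomial $0$ dominating near $\p$ because $l^{ij}_\Omega>0$ on $\Omega^\circ$), giving $c=l_\Omega(\p)$ and $G_\p 0_\Omega=\Add_{00}^{c}0_\Omega=\min(l_\Omega(\p),l_\Omega(z))$, is exactly the bookkeeping the paper leaves implicit. The same identity is also established earlier in the paper by a direct coefficient argument via Remark~\ref{rem_lomegaestimate}, but your derivation matches the corollary's intended proof.
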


\begin{remark}
\laber{rem_smooth} Suppose that $G_\p f=\Add_{kl}^c f$.
We can include the operator $\Add_{kl}^c$  into a continuous family of operators $$f\to \Add_{kl}^{ct}f,\text{\ where $t\in[0,1]$}.$$ This allows us to observe the tropical curve {\it during} the application of $\Add_{kl}^c$, in other words, we look at the family of curves defined by tropical series $\Add_{kl}^{ct}f$ for $t\in[0,1]$. See Figure~\ref{fig_ShrinkPhi}. \end{remark}

\section{Dynamic generated by $G_\p$ for $\p\in P$.}
\label{sec_dynamic}

Recall that $P=\{\p_i\}_{i=1}^n, P\subset\Omega^\circ$.  Let $Q=\{\q_1,\q_2,\dots\}$ be an infinite sequence of points in $P$ where each point $\p_i,i=1,\dots, n$ appears infinite number of times. Let $f$ be any $\Omega$-tropical series. Consider a sequence of $\Omega$-tropical series $\{ f_m\}_{m=1}^\infty$ defined recursively as $$f_1=f, f_{m+1}=G_{{\bf q}_m} f_m.$$

\begin{proposition}\label{prop_gpsconvergence}
The sequence $\{ f_m \}_{m=1}^\infty$ uniformly converges to $G_P f$.
\end{proposition}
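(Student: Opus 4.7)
\bigskip
\noindent\textbf{Proof proposal.}

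The plan is to identify the pointwise limit of $\{f_m\}$ as $G_Pf$ and then upgrade to uniform convergence by Dini's theorem. First I would establish the sandwich
\[
f_m \;\le\; f_{m+1} \;\le\; G_P f
\]
for every $m$. The lower inequality is built into the definition of $G_{\mathbf{q}_m}$, since $G_{\mathbf{q}_m}f_m(z)=\inf\{g(z):g\ge f_m,\ g\text{ non-smooth at }\mathbf{q}_m\}\ge f_m(z)$. The upper inequality follows by induction: $G_Pf$ is already non-smooth at every point of $P$, so in particular at $\mathbf{q}_m$, which gives $G_{\mathbf{q}_m}G_Pf=G_Pf$; combining this with the monotonicity Lemma~\ref{lem_gpmponotone} yields $f_{m+1}=G_{\mathbf{q}_m}f_m\le G_{\mathbf{q}_m}G_Pf=G_Pf$. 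Thus $f_m$ converges pointwise to some $f_\infty$ with $f\le f_\infty\le G_Pf$, and in particular $f_\infty|_{\partial\Omega}=0$.

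Next, I would show that $f_\infty$ is an $\Omega$-tropical series by controlling the canonical coefficients. Writing $f_m$ in canonical form with coefficients $a^{(m)}_{ij}$, the formula $a^{(m)}_{ij}=\sup_{\Omega}\bigl(f_m(x,y)-ix-jy\bigr)$ makes the sequence $m\mapsto a^{(m)}_{ij}$ monotone non-decreasing and bounded above by $a^{(\infty)}_{ij}:=a_{ij}$ of $G_Pf$, so $a^{(m)}_{ij}\to a^\infty_{ij}$. For any compact $K\subset\Omega^\circ$, Lemma~\ref{lemma_estimate} applied with $C=\max_K G_Pf$ produces a finite set $\mathcal{M}_K\subset\A_\Omega$ such that for every $m$
\[
f_m\big|_K \;=\; \min_{(i,j)\in\mathcal{M}_K}\bigl(ix+jy+a^{(m)}_{ij}\bigr).
\]
Letting $m\to\infty$ gives $f_\infty|_K = \min_{(i,j)\in\mathcal{M}_K}(ix+jy+a^\infty_{ij})$, so $f_\infty$ is a tropical polynomial on any compact sub-open, hence a tropical series on $\Omega^\circ$; together with $f_\infty|_{\partial\Omega}=0$, Lemma~\ref{lemma_usualisomegatropical} promotes it to an $\Omega$-tropical series.

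The central step, and the main obstacle, is verifying that $f_\infty$ is \emph{not} smooth at each $\mathbf{p}_i\in P$: limits of non-smooth functions can a priori become smooth. Here is how the finiteness provided by Lemma~\ref{lemma_estimate} saves us. Fix $\mathbf{p}_i$, pick a compact neighbourhood $K$ of $\mathbf{p}_i$ in $\Omega^\circ$, and let $\mathcal{M}_K$ be the finite set above. The hypothesis that each $\mathbf{p}_i$ appears infinitely often in $Q$ yields infinitely many indices $m$ with $\mathbf{q}_m=\mathbf{p}_i$, and for every such $m$ the function $f_{m+1}=G_{\mathbf{p}_i}f_m$ is not smooth at $\mathbf{p}_i$, i.e.\ at least two distinct monomials from $\mathcal{M}_K$ realize the minimum at $\mathbf{p}_i$. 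Since the set of unordered pairs in $\mathcal{M}_K$ is finite, pigeonhole gives a \emph{fixed} pair $(k_1,l_1)\ne(k_2,l_2)$ realizing this minimum at $\mathbf{p}_i$ for infinitely many $m$. Passing to the limit in the equality $k_1x_0+l_1y_0+a^{(m)}_{k_1l_1}=k_2x_0+l_2y_0+a^{(m)}_{k_2l_2}$ and in the inequalities against the other monomials in $\mathcal{M}_K$ shows that this same pair realizes the minimum at $\mathbf{p}_i$ in $f_\infty$, so $f_\infty$ is not smooth at $\mathbf{p}_i$.

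Therefore $f_\infty\in V(\Omega,P,f)$, which gives $G_Pf\le f_\infty$; combined with $f_\infty\le G_Pf$, we conclude $f_\infty=G_Pf$. For the uniform convergence, all $f_m$ and $G_Pf$ are continuous, and the convergence is monotone, so Dini's theorem gives uniform convergence on every compact subset of $\Omega$. To extend this to all of $\Omega$ one uses that $G_Pf$ is continuous and vanishes on $\partial\Omega$, so given $\varepsilon>0$ there is a compact $K_\varepsilon\subset\Omega$ outside which $G_Pf<\varepsilon$; since $0\le G_Pf-f_m\le G_Pf$, the tails are uniformly small, and on $K_\varepsilon$ Dini applies.
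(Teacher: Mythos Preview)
Your argument is correct up through the identification $f_\infty=G_Pf$, and your pigeonhole justification that $f_\infty$ is non-smooth at each $\mathbf p_i$ is a genuine improvement over the paper's terse ``it is clear''.

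The only real gap is in the last paragraph. When $\Omega$ is unbounded, the claim that there exists a compact $K_\varepsilon\subset\Omega$ with $G_Pf<\varepsilon$ on $\Omega\setminus K_\varepsilon$ is false: take $\Omega=\{x\ge 0\}$ and $P=\{(1,0)\}$, so $G_P0_\Omega=\min(x,1)$ equals $1$ on the unbounded set $\{x\ge 1\}$. Thus Dini alone does not upgrade compact-uniform convergence to global uniform convergence here.

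The fix is exactly what the paper does instead of Dini. By Lemma~\ref{lem_singlegp}, each $G_{\mathbf q_m}$ modifies a single coefficient, namely the one dominating at $\mathbf q_m\in P$; applying Lemma~\ref{lemma_estimate} with $K=P$ (not an arbitrary compact) shows these modified monomials all lie in one fixed finite set $\mathcal M$, independent of $m$. Hence for every $z\in\Omega$,
\[
0\le G_Pf(z)-f_m(z)\le \max_{(i,j)\in\mathcal M}\bigl(a^{(\infty)}_{ij}-a^{(m)}_{ij}\bigr),
\]
and the right-hand side tends to $0$ since $\mathcal M$ is finite and each coefficient sequence converges. This gives uniform convergence on all of $\Omega$ directly, with no appeal to compactness of sublevel sets. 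You had essentially set this up with your $\mathcal M_K$; the point is that a single $\mathcal M$ works globally once you choose $K\supset P$.
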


\begin{proof}First of all, $G_P f$ has an upper bound $f+nl_{\Omega}$ by arguments as in Proposition~\ref{prop_upperbound}. 
Applying Lemma \ref{lem_gpmponotone}, induction on $m$ and the obvious fact that $G_{\p_m}G_Pf=G_Pf$ we have that $f_m\leq G_P f$ for all $m.$ 
It follows from Lemmata~\ref{lemma_estimate},~\ref{lem_singlegp} that $G_{{\bf q}_m}, m=1,\dots$ change only a certain fixed finite subset of monomials in $f_m$. This implies the uniform convergence: since the family $\{f_{m}\}_{m=1}^\infty$ is pointwise monotone and bounded, it converges to some $\Omega$-tropical series $\tilde f\leq G_P f$. Indeed, to find the canonical form of $\tilde f$ we can take the limits (as $m\to\infty$) of the coefficients for $f_m$ in their canonical forms \eqref{eq_series}. 

It is clear that $\tilde f$ is not smooth at all the points $P$. Therefore, by definition of $G_P$ we have $\tilde f\geq G_Pf$, which finishes the proof.
\end{proof}

\begin{remark}
Note that in the case when $\Omega$ is a lattice polygon and the points $P$ are lattice points, all the increments $c$ of the coefficients in $G_\p=\Add_{kl}^c$ are integers, and therefore the sequence $\{f_m\}$ always stabilizes after a {\bf finite} number of steps.
\end{remark}

\begin{lemma}
\laber{lemma_ecloseseries}
Let $\e>0,\B\subset\ZZ^2$ and $f,g$ be two tropical series in $\Omega^\circ$ written as 
$$f(x,y)=\min_{(i,j)\in\B}(ix+jy+a_{ij}),g(x,y)=\min_{(i,j)\in\B}(ix+jy+a_{ij}+\delta_{ij}).$$
If $|\delta_{ij}|<\e$  for each $(i,j)\in\B$, then $C(f)$ is $2\e$-close to $C(g)$.
\end{lemma}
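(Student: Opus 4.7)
The plan is to read ``$2\e$-close'' as Hausdorff distance, and by the symmetry $f\leftrightarrow g$ reduce the claim to showing that each $z\in C(f)$ admits a $z'\in C(g)$ with $\|z-z'\|\le 2\e$. I would argue by contradiction: assuming $C(g)\cap B(z,2\e)=\emptyset$, I will show that $f$ must in fact be smooth at $z$, contradicting $z\in C(f)$.

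To set up the contradiction I first apply Lemma~\ref{lemma_estimate} on a small compact neighbourhood of $z$ in $\Omega^\circ$, so that only finitely many monomials indexed by some $\mathcal{M}\subset\B$ contribute to either $f$ or $g$ near $z$; on this neighbourhood both functions are then genuine tropical polynomials indexed by $\mathcal{M}$. Since the open convex ball $B(z,2\e)$ contains no corner of $g$, the function $g$ coincides there with a single monomial $m'_{\alpha\beta}=\alpha x+\beta y+a_{\alpha\beta}+\delta_{\alpha\beta}$, while for every other $(k,l)\in\mathcal{M}$ the strict inequality $m'_{kl}(w)>m'_{\alpha\beta}(w)$ holds throughout $B(z,2\e)$.

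Setting $L_{kl}(w):=m_{kl}(w)-m_{\alpha\beta}(w)$, this rewrites as $L_{kl}(w)>\delta_{\alpha\beta}-\delta_{kl}$ on the whole ball. The affine function $L_{kl}$ has nonzero integer gradient $(k-\alpha,l-\beta)$, so $\|\nabla L_{kl}\|\ge 1$, and its infimum over $B(z,2\e)$ equals $L_{kl}(z)-2\e\|\nabla L_{kl}\|$. Combining these with the strict estimate $\delta_{\alpha\beta}-\delta_{kl}>-2\e$ yields
\[L_{kl}(z)\ge(\delta_{\alpha\beta}-\delta_{kl})+2\e\|\nabla L_{kl}\|>-2\e+2\e=0,\]
so $m_{kl}(z)>m_{\alpha\beta}(z)$ for every $(k,l)\ne(\alpha,\beta)$ in $\mathcal{M}$. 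Thus $f(z)=m_{\alpha\beta}(z)$ is attained by a unique monomial, $f$ is smooth at $z$, and we reach the desired contradiction.

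The main obstacle is purely bookkeeping: the whole argument hinges on preserving strict inequalities. These come from two places — the hypothesis $|\delta_{ij}|<\e$ is strict, and the use of the \emph{open} ball (so that ``$C(g)$ avoids it'' translates to a strict separation of monomials of $g$ on it) — and together they are precisely what is needed to promote the computation to the strict final bound $L_{kl}(z)>0$. A minor side issue is the case $d(z,\partial\Omega)<2\e$, which one handles by running the same infimum argument along a segment through $z$ inside $B(z,2\e)\cap\Omega^\circ$ and using that $\|\nabla L_{kl}\|\ge 1$ still bounds $L_{kl}(z)$ strictly below $0$.
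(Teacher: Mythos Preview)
Your argument is correct and follows essentially the same route as the paper's proof: assume $B_{2\e}(z)\cap C(g)=\varnothing$, identify the unique $g$-monomial $(\alpha,\beta)$ on that ball, and derive $m_{kl}(z)>m_{\alpha\beta}(z)$ for every other relevant $(k,l)$, contradicting $z\in C(f)$. The paper carries out the same computation after an $SL(2,\ZZ)$ change of coordinates that normalises the $g$-monomial to a constant and one $f$-minimal monomial to $x$, whereas you keep it coordinate-free via the gradient bound $\|\nabla L_{kl}\|\ge 1$; the two are equivalent. Your remark about the case $d(z,\partial\Omega)<2\e$ is not fully justified (the direction of steepest descent of $L_{kl}$ may exit $\Omega^\circ$, so the segment argument does not obviously recover the full $2\e$), but the paper's proof tacitly assumes $B_{2\e}(z)\subset\Omega^\circ$ as well.
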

\begin{proof}

Let $z\in C(f)$, $l_1,l_2$ be two monomials of $f$, which are minimal at $z$. Suppose that $B_{2\e}(z)\cap C(g) = \varnothing$. Therefore $g|_{B_{2\e}(z)} = l(x,y)$ where $l:\RR^2\to R$ is a linear function with integer slope. Without loss of generality we may suppose that $z=(0,0), f(z)=0$ and $l(x,y) = c$. Clearly, $c\geq -\e$. At least one of $l_1,l_2$ is not a constant, by $SL(2,\ZZ)$-change of coordinates  we may suppose that $l_1 = x$. Then, in $g$, the monomial $x$ has the coefficient $\delta_{1,0}$ which satisfies $|\delta_{1,0}|\leq \e$. But then $x+\delta_{1,0}\leq -\e\leq c$ at a point in $B_{2\e}((0,0))$, so this point belongs to $C(g)$, which is a contradiction.
%Suppose that an edge $E$ of $C(f)$ is given by $ix+jy+a_{ij}=i'x+j'y+a_{ij}'$ for some $(i,j),(i',j')\in\B$. The local equation of $E$ is therefore $(i-i')x+(j-j')y+(a_{ij}-a_{ij}')=0$. For $C(g)$ this becomes $(i-i')x+(j-j')y+(a_{ij}-a_{ij}')=(\delta_{ij}'-\delta_{ij})$. Since $i,i'j,j'\in\ZZ$, the zero set of new local equation of this edge is $2\e$-close to $E$. In other words, when we change a coefficient $a_{ij}$, the edges of the face $F$, where $ix+jy+a_{ij}$ is the minimal monomial, move by at most $\e$. \add{a bit handwaving} When we change coefficients in the neighboring faces, the edges of $F$ move again by at most $\e$. Therefore $\partial F$ moves by at most $2\e$. If there are vertices of valency bigger than three on $\partial F$, their perturbations can produce new edges, but again in the $\e$-neighborhood of $\partial F$. Some new faces may appear, but, again, in $\e$-neighborhood of old vertices and edges. \add{define the distance between functions and prove that gp do not increasi it}
\end{proof}

\begin{remark}
\label{rem_curvesnearlimit}
Note that if $G_{{\bf q}_n}\dots G_{{\bf q}_1}f$ is close to the limit $G_Pf$, then by Lemma~\ref{lemma_ecloseseries} we see that the corresponding tropical curves are also close to each other.
\end{remark}

\begin{definition}
For two $\Omega$-tropical series $f=\inf(ix+jy+a_{ij}), (i,j)\in \B$ and $g = \inf(ix+jy+b_{ij}), (i,j)\in \B$ we define $\rho(f,g) = \sup_\B(|a_{ij}-b_{ij}|)$.
\end{definition}

\begin{lemma}
\label{lem_distance}
If $f,g$ are two $\Omega$-tropical series and $\p\in\Omega^\circ$, then $\rho(G_\p f,G_\p g)\leq \rho(f,g)$.
\end{lemma}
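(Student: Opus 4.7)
The plan is to derive an explicit coefficient-wise formula for the canonical form of $G_\p f$ and then observe that this formula is $1$-Lipschitz in the coefficients. Write $\p = (x_0, y_0)$, and for each $(k,l) \in \A_\Omega$ define
\begin{equation*}
A_{kl}(f) := \inf_{(i,j) \in \A_\Omega \setminus \{(k,l)\}}\bigl(a_{ij}(f) + (i-k)x_0 + (j-l)y_0\bigr),
\end{equation*}
so that $A_{kl}(f) + kx_0 + ly_0$ is the value at $\p$ of the tropical series obtained from $f$ by deleting the monomial $(k,l)$. The key claim is that, in canonical form,
\begin{equation*}
a_{kl}(G_\p f) = \max\bigl(a_{kl}(f),\, A_{kl}(f)\bigr) \qquad \text{for every } (k,l) \in \A_\Omega.
\end{equation*}

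To verify this I use Lemma~\ref{lem_singlegp} and split into cases. If $\p \notin C(f)$, there is a unique $(k_0, l_0)$ strictly dominant at $\p$; the proof of Lemma~\ref{lem_singlegp} shows that the canonical coefficient of $(k_0, l_0)$ in $G_\p f$ is $A_{k_0 l_0}(f)$, and strict dominance forces $A_{k_0 l_0}(f) > a_{k_0 l_0}(f)$. For any other $(k,l) \neq (k_0, l_0)$, the same lemma leaves the coefficient unchanged; since $(k,l)$ is not dominant at $\p$, removing it does not affect $f(\p)$, giving $A_{kl}(f) = f(\p) - kx_0 - ly_0 < a_{kl}(f)$. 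If instead $\p \in C(f)$, then $G_\p f = f$; several monomials are tied at the minimum at $\p$, so deleting any one still leaves another minimizer, and one gets $A_{kl}(f) \leq a_{kl}(f)$ for every $(k,l)$. In every case the formula holds, and the right-hand side is canonical (i.e.\ minimal), because lowering $a_{kl}$ below $A_{kl}(f)$ would make $(k,l)$ strictly dominant at $\p$ and restore smoothness.

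With the formula in hand, set $\e = \rho(f, g)$. Each term of the infimum defining $A_{kl}(\cdot)$ is affine of unit slope in exactly one coefficient $a_{ij}$, so perturbing every coefficient by at most $\e$ perturbs each term, and hence the infimum, by at most $\e$; thus $|A_{kl}(f) - A_{kl}(g)| \leq \e$. Combined with $|a_{kl}(f) - a_{kl}(g)| \leq \e$ and the elementary inequality $|\max(u_1, v_1) - \max(u_2, v_2)| \leq \max(|u_1 - u_2|, |v_1 - v_2|)$, this gives
\begin{equation*}
|a_{kl}(G_\p f) - a_{kl}(G_\p g)| \leq \max\bigl(|a_{kl}(f) - a_{kl}(g)|,\, |A_{kl}(f) - A_{kl}(g)|\bigr) \leq \e
\end{equation*}
for every $(k,l) \in \A_\Omega$, and taking the supremum proves $\rho(G_\p f, G_\p g) \leq \rho(f, g)$. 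The only conceptual obstacle is establishing the coefficient-level formula uniformly in the position of $\p$ relative to $C(f)$; once that is in place, the Lipschitz estimate is a formal consequence of the stability of $\inf$ and $\max$ with respect to the supremum norm on coefficients.
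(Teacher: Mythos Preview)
Your proof is correct and takes a genuinely different route from the paper's. The paper argues by case analysis on whether $\p$ lies in the same face of $C(f)$ and $C(g)$: when the dominant monomials agree, Lemma~\ref{lem_singlegp} changes the same coefficient in both and the new values differ by at most $\rho(f,g)$ because $|f'(\p)-g'(\p)|\leq\rho(f,g)$; when the dominant monomials differ, the paper normalizes coordinates and chases a chain of inequalities between the two changed coefficients and their neighbors, leaving the remaining cases to ``similarly''. Your approach instead extracts the closed formula $a_{kl}(G_\p f)=\max\bigl(a_{kl}(f),A_{kl}(f)\bigr)$, valid uniformly regardless of where $\p$ sits relative to $C(f)$, and then reads off the $1$-Lipschitz dependence from the stability of $\inf$ and $\max$ under coefficient perturbations. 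This buys you a case-free argument and makes transparent why no cancellation or special geometry is needed; the paper's version is shorter to state but leaves more to the reader. One small point: your sentence justifying canonicality (``lowering $a_{kl}$ below $A_{kl}(f)$ would make $(k,l)$ strictly dominant at $\p$'') literally covers only the coefficient that changes; for the unchanged coefficients the canonicality of $a_{kl}(f)$ in $G_\p f$ follows because $f\leq G_\p f\leq kx+ly+a_{kl}(f)$ and $a_{kl}(f)$ was already canonical for $f$. This is routine and does not affect the validity of the argument.
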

\begin{proof}
Note that for each $z\in \Omega$, $|f(z)-g(z)|\leq \rho(f,g)$. Therefore, if $\p$ belong to the face where $ix+jy+a_{ij} = f(x,y)$ and $ix+jy+b_{ij} = g(x,y)$, then it follows from \eqref{eq_gp} that the coefficients in monomial $ix+jy$ in $G_\p f,G_\p g$ differ by at most $\rho(f,g)$. 

Let $\p$ belong to different faces in $C(f), C(g)$, i.e. $ix+jy+a_{ij} = f(x,y), i'x+j'y+b_{i'j'} = g(x,y)$ near $\p$. Without loss of generality we may suppose that $i'=j'=0$ and $\p=(0,0)$. Therefore, $a_{ij}\leq a_{00}, b_{ij}\geq b_{00}, a_{00}\leq b_{00}+\rho(f,g)$. Finally, $G_\p f$ increases $a_{ij}$, clearly new $a_{ij}$ is at most $a_{00}\leq b_{00}+\rho(f,g)\leq b_{ij}+\rho(f,g)$. Other inequalities for the coefficients can be obtained similarly. 
\end{proof}

\section{A lift of a wave operator $G_\p$ in characteristic two}
\label{sec_lift}
Let $\mathbb K$ be a field with a valuation map $\mathrm{val}:\mathbb K^*\to \RR$. We use the convention $\mathrm{val}(a+b)\geq \min (\mathrm{val}(a)+\mathrm{val}(b)),\mathrm{val}(0)=+\infty$. To each polynomial $$F(X,Y)=\sum_{(i,j)\in\A}A_{ij}X^iY^j, A_{ij}\in \mathbb K^*$$ we associate the tropical polynomial $$\mathrm{Trop}(F)(x,y) = \min_{(i,j)\in\A}(\mathrm{val}(A_{ij})+ix+jy).$$

Historically, operators $G_\p$ appeared as continuous incarnations of waves in sandpiles, see \cite{us}. However, it is naturally to ask about their ``detropicalized'' version $S_\p, \p\in\mathbb (K^*)^2$, namely, how to lift $G_\p$ to the ring on polynomials (or series) over $\mathbb K$.

We managed to do that only in characteristic two. The formula is as follows: $$(S_\p F)(z)=F(z)+F(\sqrt{z\p})^2/F(\p) \text{ for \ } z\in\mathbb (K^*)^2,$$ if $F(\p)\ne 0$ and $S_\p F = F$ if $F(\p)=0$. We multiply the points coordinatewise. 

\begin{theorem}
For each $F\in\mathbb K[x,y]$ and $\p\in(\mathbb K^*)^2$ the following condition holds $$G_{\mathrm{val}(\p)} (\mathrm{Trop}(F))=\mathrm{Trop}(S_\p F).$$
\end{theorem}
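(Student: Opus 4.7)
The plan is a direct coefficient-by-coefficient computation, exploiting the fact that in characteristic two the Frobenius is a ring homomorphism. Writing $F = \sum_{(i,j)\in\A} A_{ij}X^iY^j$ and $\p = (p_1, p_2)$, one has
\[
F(\sqrt{z\p})^2 = \sum_{(i,j)\in\A} A_{ij}^2\, p_1^i p_2^j\, z_1^i z_2^j,
\]
an honest element of $\mathbb K[z_1,z_2]$. Provided $F(\p)\neq 0$, this gives
\[
S_\p F(z) = \sum_{(i,j)\in\A} B_{ij}\, z_1^i z_2^j, \qquad B_{ij} = A_{ij}\left(1 + \frac{A_{ij}\, p_1^i p_2^j}{F(\p)}\right),
\]
so the theorem reduces to matching each $\mathrm{val}(B_{ij})$ with the coefficient of the corresponding monomial in $G_{\mathrm{val}(\p)}\mathrm{Trop}(F)$ as prescribed by Lemma~\ref{lem_singlegp}.

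First I would dispose of the case $F(\p) = 0$. Here $\mathrm{val}(F(\p)) = +\infty$ strictly exceeds $\mathrm{Trop}(F)(\mathrm{val}(\p))$, so there is cancellation among the lowest-valuation terms of $F(\p)$, which forces the minimum in $\mathrm{Trop}(F)$ at $\mathrm{val}(\p)$ to be attained by at least two monomials; thus $\mathrm{val}(\p)\in C(\mathrm{Trop}(F))$. Then $\mathrm{Trop}(F)$ is already non-smooth at $\mathrm{val}(\p)$, so $G_{\mathrm{val}(\p)}$ fixes it, and since $S_\p F = F$ by definition the two sides of the theorem agree.

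Assume now $F(\p)\neq 0$ and, in the generic subcase, that the minimum of $f := \mathrm{Trop}(F)$ at $(x_0, y_0) := \mathrm{val}(\p)$ is attained uniquely at some $(k,l)\in\A$. Then $\mathrm{val}(F(\p)) = a_{kl} + k x_0 + l y_0 = f(x_0, y_0)$ by strictness of the ultrametric inequality. For $(i,j)\neq (k,l)$ the fraction $A_{ij}p_1^ip_2^j/F(\p)$ has strictly positive valuation, so $\mathrm{val}(B_{ij}) = a_{ij}$. For $(i,j)=(k,l)$, setting $F' := F - A_{kl}X^kY^l$ and $\epsilon := F'(\p)/(A_{kl}p_1^kp_2^l)$ (of positive valuation), write $A_{kl}p_1^kp_2^l/F(\p) = (1+\epsilon)^{-1}$. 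The characteristic-two identity $1 + (1+\epsilon)^{-1} = \epsilon/(1+\epsilon)$ then yields
\[
\mathrm{val}(B_{kl}) = a_{kl} + \mathrm{val}(\epsilon) = \mathrm{val}(F'(\p)) - k x_0 - l y_0,
\]
which equals $f'(x_0, y_0) - k x_0 - l y_0$ whenever $F'(\p)$ has no further cancellation among its lowest-valuation terms. This is exactly the coefficient at $(k,l)$ that Lemma~\ref{lem_singlegp} prescribes for $G_{\mathrm{val}(\p)} f$; the other coefficients being unchanged, the identity follows in this generic case.

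The main obstacle lies in the degenerate subcases, namely $\mathrm{val}(\p)\in C(f)$ with $F(\p)\neq 0$, or the cancellation $\mathrm{val}(F'(\p)) > f'(x_0, y_0)$. In these situations some $\mathrm{val}(B_{ij})$ can exceed the coefficients of $G_{\mathrm{val}(\p)} f$, so the two tropical polynomials need no longer agree monomial-by-monomial. One must verify that they still define the same piecewise-linear function --- the excess valuation appearing only on monomials already dominated near $\mathrm{val}(\p)$ --- or reduce to the generic subcase by perturbing the coefficients of $F$ within the same support and invoking continuity (Lemma~\ref{lem_distance}). Making this reduction rigorous is the delicate part of the proof.
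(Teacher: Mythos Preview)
Your generic-case computation is exactly the paper's argument: both of you expand $F(\sqrt{z\p})^2/F(\p)$ monomial-by-monomial, observe that for $(i,j)\neq(k,l)$ the correction term has strictly positive valuation, and then isolate the $(k,l)$-coefficient by writing $F=A_{kl}X^kY^l+F'$ (the paper calls $F'$ by $G$) to find the new coefficient $A_{kl}\cdot F'(\p)/F(\p)$, whose valuation matches the increment $c$ of Lemma~\ref{lem_singlegp}. Your treatment of the case $F(\p)=0$ is also correct and is not spelled out in the paper.

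Where you are more careful than the paper is precisely in the degenerate subcases. The paper disposes of the situation ``two or more monomials of $F$ have equal minimal valuation at $\p$'' in one sentence, asserting $\mathrm{val}(F'(\p)/F(\p))=0$ so that no coefficient of $\mathrm{Trop}(F)$ moves. But this assertion tacitly assumes there is no cancellation in $F(\p)$ or in $F'(\p)$; if, say, the two minimal terms sum to something of higher valuation, then $\mathrm{val}(F(\p))>f(\mathrm{val}(\p))$ and the argument breaks down --- indeed some $\mathrm{val}(B_{ij})$ can then drop below $a_{ij}$. The same issue arises with your second degeneracy $\mathrm{val}(F'(\p))>f'(x_0,y_0)$, which the paper does not mention at all. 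So the obstacle you flag is genuine, and the paper's own proof shares the gap rather than resolving it; your proposed fix via a perturbation--continuity argument is a reasonable way to close it, but neither you nor the paper actually carries it out.
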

\begin{remark}
It is easy to check that $(S_\p F)(\p)=0$, which implies that $C(\mathrm{Trop}(S_\p F))$ passes through $\p$. In turn it implies that $S_\p S_\p F= S_\p F$. 
\end{remark}

\begin{proof} 
Suppose that $F(x,y) = \sum A_{ij}X^iY^j$ and $\p=(p_1,p_2)$. Suppose that $A_{kl}X^kY^l$ is the only monomial with minimal valuation at $\p$ (i.e. $C(\mathrm{Trop}(F))$ does not pass through $\mathrm{val}(\p)$). 
 Then $$F(\sqrt{z\p})^2/f(\p) = \sum X^iY^j p_1^ip_2^j A_{ij}^2/f(\p).$$ 
 Note that 
 $$\mathrm{val}(A_{ij}^2p_1^ip_2^j/F(\p))=\mathrm{val}\Big(A_{ij}\frac{A_{ij}p_1^ip_2^j}{A_{kl}p_1^kp_2^l}\Big)>\mathrm{val}(A_{ij}),$$ 
 therefore the valuation of all coefficients for $ix+jy$ of $S_\p f$ and $f$ are the same except $kx+ly$. Presenting $F(z)$ near $\p$ as $F(x,y) = A_{kl}x^kx^l + G(x,y)$ we complute the new coefficient for $kx+ly$ as $$A_{kl}+A_{kl}\frac{A_{kl}p_1^kp_2^l}{F(\p)} = A_{kl}\frac{-G(\p)}{F(\p)},$$ and $\mathrm{val}(G(\p)/F(\p))$ coincides with the expression for $c$ in Lemma~\ref{lem_singlegp}. Note that if two or more valuations of monomials of $F$ are equal at $\p$, then $\mathrm{val}(G(\p)/F(\p)) = 0$ and no one coefficient of $\mathrm{Trop}(F)$ changes.
\end{proof}

Partial motivation to introduce the operators $S_\p$ was to prove the finiteness of the dynamic of $G_{\p_i}$. Some kind of stabilization (in the smallest terms) for $S_{\p_i}$ would imply the following finiteness property for $G_{\p_i}$.

\begin{question} Let $P=\{\p_1,\dots,p_n\}\subset \Omega^\circ$. Is it true that $G_P 0_\Omega = (\prod G_{\p_1}\dots G_{\p_n})^k 0_\Omega$ for a finite $k$?
\end{question}

%\section{Power law and experiments}
%
%\begin{example}
%See examples of tropical analytic curves on Figure~3 in \cite{CPS}, first two rows. It is a mystery for us why the most of the direction of the edges of the tropical curves on those pictures are $(1,0),(0,1),(1,1),(-1,1)$. See also pictures in \cite{sandcomputation}.\add{proabbly later delete}
%\end{example}

\section{Contracting a face}

By a change of coordinates for a function $f:\RR^2\to\RR$ we mean $$f(x,y)\to f(ax+by+e,cx+dy+f)+nx+my+k$$ where $a,b,c,d,n,m\in\ZZ,e,f,k\in\RR, ad-bc=1$.

\begin{definition}
\laber{def_smoothvertex}
A vertex $V$ of a tropical curve $C(f)$ is {\it smooth} if the restriction of $f$ to a small neighborhood of $V$ can be presented as $\min(x,y,0)$ after a change of coordinates. A vertex $V$ of $C(f)$ is called {\it a node} if the restriction of $f$ to a small neighborhood of $V$ can be presented as $\min(x,y,0,x+y)$  after a change of coordinates. An edge of $C(f)$ has weight $m$ if the restriction of $f$ to a small neighborhood of any internal point in the edge is $\min(0,mx)$ after a change of coordinates. 
See Figure~\ref{fig_balancing} for examples of smooth and non-smooth vertices.
\end{definition}

%\begin{figure}[h]
%\includegraphics[width=5cm]{fig_smooth.pdf}
%\end{figure}

%\begin{figure}[h]
%\begin{tikzpicture}
%
%
%\draw(0,0.3)node{$O$};
%
%\draw(0,0)node{$\bullet$};
%\draw(-1,0)node{$\bullet$};
%\draw(1,0)node{$\bullet$};
%\draw(2,0)node{$\bullet$};
%\draw(3,0)node{$\bullet$};
%\draw(4,0)node{$\bullet$};
%
%\draw(0,-1)node{$\bullet$};
%\draw(-1,-1)node{$\bullet$};
%\draw(1,-1)node{$\bullet$};
%\draw(2,-1)node{$\bullet$};
%\draw(3,-1)node{$\bullet$};
%\draw(4,-1)node{$\bullet$};
%
%\draw(0,-2)node{$\bullet$};
%\draw(-1,-2)node{$\bullet$};
%\draw(1,-2)node{$\bullet$};
%\draw(2,-2)node{$\bullet$};
%\draw(3,-2)node{$\bullet$};
%
%\draw(0,-3)node{$\bullet$};
%\draw(-1,-3)node{$\bullet$};
%\draw(1,-3)node{$\bullet$};
%
%\end{tikzpicture}
%\caption{Dual corners, corners and curves $C(G_{\bf p} O_\Lambda)$ on them.}
%\end{figure}
%\add{unfinished picture}

\begin{remark}
At every vertex of a tropical curve the {\it balancing condition} is satisfied, i.e. the weighted sum of the outgoings primitive vectors in the directions of edges is zero, see Figure~\ref{fig_balancing}.
\end{remark}

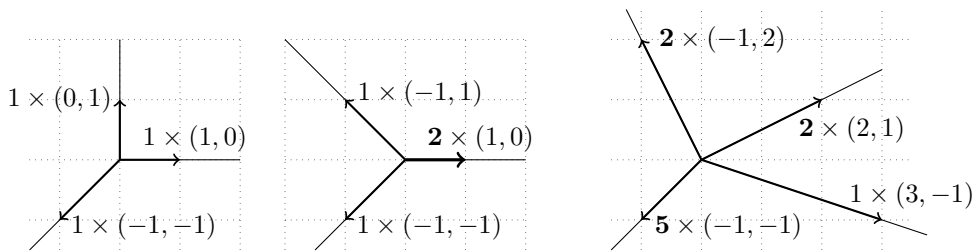
\begin{figure}[h]
    \centering

\begin{tikzpicture}[scale=0.4]
\draw(0,0)--++(4,0);
\draw(0,0)--++(0,4);
\draw(0,0)--++(-3,-3);
\draw[->][thick](0,0)--++(2,0);
\draw[->][thick](0,0)--++(0,2);
\draw[->][thick](0,0)--++(-2,-2);
\draw(2.5,0.7)node{$1\times (1,0)$};
\draw(-2,2)node{$1\times (0,1)$};
\draw(0.8,-2.2)node{$1\times (-1,-1)$};
\draw[black,step = 2.0,very thin, dotted](-3,-3) grid (4,4);

\begin{scope}[xshift=270]
\draw(0,0)--++(-4,4);
\draw(0,0)--++(-3,-3);
\draw(0,0)--++(4,0);
\draw[->][very thick](0,0)--++(2,0);
\draw[->][thick](0,0)--++(-2,2);
\draw[->][thick](0,0)--++(-2,-2);
\draw(2.5,0.7)node{${\bf 2}\times (1,0)$};
\draw(0.5,2.2)node{$1\times (-1,1)$};
\draw(0.8,-2.2)node{$1\times (-1,-1)$};
\draw[black,step = 2.0,very thin, dotted](-4,-3) grid (4,4);
\end{scope}

\begin{scope}[xshift=550]
\draw(0,0)--++(-3,-3);
\draw(0,0)--++(-2.5,5);
\draw(0,0)--++(6,3);
\draw(0,0)--++(7.5,-2.5);
\draw[->][thick](0,0)--++(-2,-2);
\draw[->][thick](0,0)--++(-2,4);
\draw[->][thick](0,0)--++(4,2);
\draw[->][thick](0,0)--++(6,-2);
\draw(0.9,-2.2)node{${\bf 5}\times (-1,-1)$};
\draw(7,-1.2)node{$1\times (3,-1)$};
\draw(5,1)node{${\bf 2}\times (2,1)$};
\draw(0.7,4)node{${\bf 2}\times (-1,2)$};
\draw[black,step = 2.0,very thin, dotted](-3,-3) grid (7,5);
\end{scope}

\end{tikzpicture}
\caption{Examples of balancing condition in local pictures of tropical curves near vertices. The notation ${\bf m}\times (p,q)$ means that the corresponding edge has the weight $m$ and the primitive vector $(p,q).$ The vertex on the left picture is smooth, the vertices in the middle and right pictures are neither smooth nor nodal.}
\label{fig_balancing}

\end{figure}

\begin{definition}
\laber{def_smoothcorner}
A corner of a $\QQ$-polygon $\Delta$ is called {\it unimodular} if the primitive vectors of the directions of the edges of $\Delta$ at this corner give a $\ZZ$-basis of $\ZZ^2$. A $\QQ$-polygon is unimodular if all its corners are unimodular.
\end{definition}

\begin{definition}
\laber{def_nodal}
A $\Delta$-tropical curve is called {\it smooth or nodal} if all its vertices in $\Delta^\circ$ are smooth or nodal (see Definition~\ref{def_smoothvertex}).\m{note that smooth is only in the interior. we don't care about corners because smoothness there follows from niceness} In particular, this curve has no edges of weight bigger than one.
\end{definition}

Let a $\Delta$-tropical polynomial $f$ define a tropical curve $C(f)\subset \Delta$. Let $\p$ belong to the interior of a face $\Phi$ of the complement $\Delta\setminus C(f)$ of $C(f)$. Suppose that all corners of $\Phi$ are unimodular. We can find $c>0$ and $(i,j)\in\ZZ^2$ such that $$G_\p f=\Add_{ij}^c f.$$ Consider the family $\{\Add_{ij}^{ct} f\}_{t\in[0,1]}$ of tropical polynomials (see Remark~\ref{rem_smooth}). Denote by $\Phi^t$ the face of $\Add_{ij}^{ct} f$ to which $\p$ belongs.

Consider a side $S$ of the face $\Phi$ and two other sides $S_1$ and $S_2$ of $\Phi$ which are the neighbors of $S$. Applying $SL(2,\ZZ)$-change of coordinates and homothety we may suppose that $S$ is the interval with endpoints $(0,0),(1,0)$. We may assume, then, that the neighborhood of $S$ is locally coincide with $C(\tilde f)$, where 
$$\tilde f(x,y)=\min(0,y, x+n_1y+c_1, -x+n_2y+c_2), n_1,n_2\in\ZZ, c_1,c_2\in \RR,$$ because both endpoints of $S$ are smooth vertices of $C(f)$. Since the endpoints of $S$ are $(0,0),(1,0)$, we see that $c_1=0,c_2=1$. We suppose that $\Phi$ is the face where the function $0+0x+0y$ is the least monomial in $\tilde f$.

The curve $C(\Add_{0,0}^{ct}\tilde f)$ in the neighborhood of $S$ is given by the tropical polynomial $$\tilde f_t(x,y)=\min(ct,y, x+n_1y, -x+n_2y+1).$$
For small $t>0$ denote by $S^t$ the side of $\Phi^t$ (recall that $\Phi^t$ is a face of the curve $C(\Add_{0,0}^{ct}\tilde f)$) which is close and parallel to the side $S$  of the face $\Phi$.  It is easy to find the coordinates of the vertices of $S^t$ by direct calculation: they are $(ct(1-n_1),ct)$ and$(ct(n_2-1)+1,ct)$. The length of $S^t$ is therefore $ct(n_2-1)+1 - ct(1-n_1) = 1 + ct(n_1+n_2-2)$. We just proved the following lemma.

\begin{lemma}
\label{lemma_sideshorter}
In the above notation, two facts are equivalent:
\begin{itemize}
\item $S^t$ is shorter then $S$ for small $t>0$,
\item $n_1+n_2<2$.
\end{itemize}
\end{lemma}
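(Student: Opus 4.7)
The plan is essentially to carry out a short, explicit computation in the local model $\tilde f_t = \min(ct, y, x+n_1 y, -x+n_2 y+1)$, since all the data determining $S^t$ is encoded there for small enough $t>0$. The first observation I would record is that, because $\Phi$ is the face on which the constant monomial $0$ is minimal in $\tilde f$, the corresponding face $\Phi^t$ of $C(\Add_{0,0}^{ct}\tilde f)$ is the face where $ct$ is minimal, and hence the side $S^t$ parallel to $S$ lies on the line $\{y=ct\}$, namely where the monomials $ct$ and $y$ both achieve the common minimum.

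Next I would locate the two endpoints of $S^t$ by intersecting $\{y = ct\}$ with the two neighboring edges of $\Phi^t$. The left endpoint is the solution of $ct = y = x + n_1 y$, giving $(x,y) = (ct(1-n_1),\, ct)$; the right endpoint is the solution of $ct = y = -x + n_2 y + 1$, giving $(x,y) = (1+ct(n_2-1),\, ct)$. Subtracting yields
\[
\text{length}(S^t) \;=\; 1 + ct(n_1+n_2-2),
\]
and since $c>0$ and we are allowed to take $t>0$ as small as we wish, the strict inequality $\text{length}(S^t) < 1 = \text{length}(S)$ holds (for all small $t$) if and only if $n_1+n_2-2 < 0$. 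Both implications of the equivalence fall out of this single identity.

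The only point that requires a small amount of care — and which I would flag as the mild obstacle — is the justification that $\tilde f_t$ faithfully models the curve $C(\Add_{ij}^{ct}f)$ near $S$ for small $t$. This uses the hypothesis that the endpoints of $S$ are smooth vertices of $C(f)$, so that only the four monomials listed in $\tilde f$ are active in a fixed neighborhood of $S$; by continuity of the coefficients in $t$, no other monomial of $f$ becomes active near $S$ for $t$ sufficiently small, and $\Phi^t$ retains a side parallel to $S$ on the line $\{y = ct\}$. Once this local validity is recorded, the lemma is immediate from the length computation above.
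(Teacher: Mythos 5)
Your proposal is correct and follows essentially the same route as the paper: after reducing to the local model $\tilde f_t=\min(ct,y,x+n_1y,-x+n_2y+1)$, you compute the endpoints $(ct(1-n_1),ct)$ and $(1+ct(n_2-1),ct)$ of $S^t$ and read off the length $1+ct(n_1+n_2-2)$, which is exactly the paper's computation. Your added remark justifying that only the four listed monomials are active near $S$ for small $t$ (using smoothness of the endpoints of $S$) is a sensible explicit record of what the paper treats implicitly.
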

 
\begin{corollary}
\laber{cor_smooth}
For the above situation there are three cases:
\begin{enumerate}[a)]
\item $n_1+n_2< 0$, this corresponds to collapsing the face $\Phi$ to $\p$ as $t\to 1$,
\item $n_1+n_2=0$, corresponds to collapsing the face $\Phi$ to a (possibly degenerate) interval containing $\p$ as $t\to 1$,
\item $n_1+n_2=1$, note that in this case $(1,n_1)+(-1,n_2)=(0,1)$.
\end{enumerate}
\end{corollary}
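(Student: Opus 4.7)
The plan is to read the statement as a classification of the integer values of $n_1 + n_2$ under the shortening constraint of Lemma~\ref{lemma_sideshorter}, together with a short computation identifying the geometric meaning of each value. Since $n_1, n_2 \in \ZZ$, Lemma~\ref{lemma_sideshorter} (``$|S^t| < |S|$ for small $t > 0$ iff $n_1 + n_2 < 2$'') forces $n_1 + n_2 \in \ZZ_{\le 1}$, which partitions into the three alternatives $n_1 + n_2 \le -1$, $n_1 + n_2 = 0$, $n_1 + n_2 = 1$ stated in (a), (b), (c).

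The assertion in (c) is immediate: $(1, n_1) + (-1, n_2) = (0, n_1 + n_2) = (0, 1)$. What makes this case distinguished is the following lattice-area computation. The three exponent vectors $(0, 1)$, $(1, n_1)$, $(-1, n_2)$ of the monomials $y$, $x + n_1 y + c_1$, and $-x + n_2 y + c_2$ span a triangle of lattice area $\tfrac{1}{2}|n_1 + n_2 - 2|$, equal to $\tfrac{1}{2}$ in case (c), $1$ in case (b), and at least $\tfrac{3}{2}$ in case (a). By the standard Newton-polygon/tropical-vertex correspondence, these three areas correspond respectively to a smooth vertex, a nodal vertex, and a more singular vertex of $C(\Add_{0,0}^c \tilde f)$ at the place to which $S$ collapses, which explains the trichotomy and in particular matches the smoothness interpretation of Definition~\ref{def_smoothvertex}.

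For the geometric descriptions in (a) and (b) I would argue directly from the length formula $|S^t| = 1 + ct(n_1 + n_2 - 2)$ worked out in the paragraph preceding Lemma~\ref{lemma_sideshorter}. In (a), the side $S^t$ shrinks at rate at least $3c$, faster than the rate at which $\Phi^t$ contracts in the transverse direction, so $S^t$ disappears entirely (before or at $t = 1$) and the whole face $\Phi^t$ contracts to a single point, which is forced to be $\p$ because $\p$ lies on $C(G_\p f)$ by Lemma~\ref{lem_singlegp}. In (b), $|S^t|$ decreases at the slower rate $2c$, matching the collapse rate in the direction transverse to $S$, so in the limit $\Phi^t$ degenerates to an interval (possibly a single point) parallel to $S$ that must contain $\p$ by the same lemma. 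The main technical obstacle is carrying out this analysis while tracking all sides of $\Phi$ simultaneously, since each side carries its own value of $n_1 + n_2$ and the global fate of $\Phi$ depends on these jointly; the unimodularity hypothesis on the corners of $\Phi$ is what keeps the local pictures standard near each vertex and makes this combined accounting tractable.
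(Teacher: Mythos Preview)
The paper offers no separate proof of this corollary; it is read off directly from the length formula $|S^t|=1+ct(n_1+n_2-2)$ preceding Lemma~\ref{lemma_sideshorter}. Your derivation of the trichotomy $n_1+n_2\in\{\le -1,\ 0,\ 1\}$ from integrality and that lemma is correct, and the identity in~(c) is immediate.

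Two points deserve correction, though. First, your lattice-area paragraph computes the dual triangle of $(0,1),(1,n_1),(-1,n_2)$, but this omits the monomial $(0,0)$ whose coefficient is being raised. When $S^t$ collapses to a point, all \emph{four} monomials coincide there, and in case~(c) the identity $(1,n_1)+(-1,n_2)=(0,0)+(0,1)$ shows they form a unit parallelogram: the resulting vertex is \emph{nodal}, not smooth (this is precisely the nodal perestroika invoked in the proof of Lemma~\ref{lemma_smooth}). So the smooth/nodal/singular labels you assign to the three cases are not the correct ones.

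Second, the rate comparisons you give for (a) and (b) do not actually establish face collapse, and the ``technical obstacle'' of tracking all sides of $\Phi$ is a non-issue. The clean argument is one line of convexity: $\Phi^t$ lies in $T^t=\{y\ge ct,\ x+n_1y\ge ct,\ -x+n_2y+1\ge ct\}$, the region cut out by $S,S_1,S_2$ alone. If $n_1+n_2<0$ the bounding lines of $S_1$ and $S_2$ meet above $S$, so $T^t$ is a triangle with base $S^t$; when $|S^t|=0$ this triangle degenerates to a single point, hence so does $\Phi^t$. If $n_1+n_2=0$ those lines are parallel, $T^t$ is a strip of width $|S^t|$, and at $|S^t|=0$ it becomes a line, forcing $\Phi^t$ to be an interval. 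That the collapse occurs at $t=1$ and contains $\p$ then follows because $\p\in(\Phi^t)^\circ$ for $t<1$.
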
 

\begin{definition}
\label{def_perestroika}
We say that a continuous family of tropical curves has a {\it nodal perestroika} (see Figure~\ref{fig_ShrinkPhi}) if all the curves, except one, are smooth, and non-smooth curve has only one nodal point, and the family near it is given by $\min(x,y,t,x+y)$ for $t\in [-\e,\e]$ up to $SL(2,\ZZ)$-change of coordinates.
\end{definition}

\begin{lemma}%[\citepage{proof_lemmasmooth}]
\laber{lemma_smooth}
If all corners of $\Phi$ are unimodular, $\p\in \Phi$, $G_\p f=\Add_{ij}^c f$, then all the vertices of $\Phi^t, t\in[0,1)$  are smooth or nodal vertices of the curve $C(\Add_{ij}^{ct} f)$. If $\Phi$ is not contracted to a point or an interval by applying $G_\p$ to $f$, then the vertices of $\Phi^1$ are smooth or nodal as well.
\end{lemma}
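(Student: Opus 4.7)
The plan is to track the continuous deformation $t\mapsto\Phi^t$ for $t\in[0,1]$ and inspect its vertices throughout. Because $\Add_{ij}^{ct}$ alters only a single coefficient of $f$, the primitive direction vector of every side of $\Phi^t$ is fixed throughout the deformation---only its position shifts, proportionally to $ct$. Hence, on any open time interval where the combinatorial type of $\Phi^t$ is constant, each vertex of $\Phi^t$ is the transverse meeting of two consecutive sides with exactly the primitive directions inherited from the preceding combinatorial type; by the unimodularity of every corner of $\Phi$ (or of its updated version), these pairs span $\ZZ^2$ and the vertex is smooth in $C(\Add_{ij}^{ct}f)$.

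The key step is to analyze a transition time $t^*$ at which some side $S^t$ of $\Phi^t$ shrinks to length zero and its two smooth endpoints merge into a single $4$-valent vertex $V^*$. Adopting the normalization of Lemma~\ref{lemma_sideshorter} ($S$ from $(0,0)$ to $(1,0)$; neighbouring slopes $n_1,n_2$), I read off the three outgoing primitive edges at each endpoint; upon merging, the two along-$S$ directions cancel, leaving the four primitive vectors $(-n_1,1)$, $(n_1-1,-1)$, $(n_2,1)$, $(1-n_2,-1)$. A direct examination of the three possible pairings shows that they split into two opposite pairs precisely when $n_1+n_2=1$---case (c) of Corollary~\ref{cor_smooth}. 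In that situation $V^*$ is a node (Definition~\ref{def_smoothvertex}) and the local family near $V^*$ is a nodal perestroika (Definition~\ref{def_perestroika}). For $t$ immediately past $t^*$, $\Phi^t$ has one fewer side, and its replacement vertex has primitive directions $(-n_1,1)$ and $(n_2,1)$ with $\det\bigl((-n_1,1),(n_2,1)\bigr)=-(n_1+n_2)=-1$, hence unimodular; the new face therefore again has unimodular corners and the analysis iterates.

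To finish, I invoke Corollary~\ref{cor_smooth} to dispose of the non-nodal cases $n_1+n_2<0$ and $n_1+n_2=0$: these correspond exactly to the whole face $\Phi$ being contracted to a point, respectively to an interval, synchronously at $t=1$ under $G_\p$. Thus for $t\in[0,1)$ no such side can individually collapse mid-dynamic---$\Phi^t$ is still a non-degenerate polygon, with only smooth vertices by the first two paragraphs---and when $\Phi$ is not contracted by $G_\p$ even the terminal transition at $t=1$ can only be of type (c), yielding a smooth or nodal vertex.

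The main obstacle is the synchronization assertion in the last paragraph: a side of $\Phi$ with $n_1+n_2\le 0$ may not collapse at an interior $t^*<1$ unless the whole face is being contracted. This is the geometric content of Corollary~\ref{cor_smooth} and rests on the minimality of the constant $c=f'(\p)-kx_0-ly_0$ from Lemma~\ref{lem_singlegp}---chosen precisely so that the curve first touches $\p$ at $t=1$---together with the fact that every side of $\Phi^t$ shifts by the same $ct$-parameter, giving a coordinated contraction.
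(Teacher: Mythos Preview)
Your approach mirrors the paper's: track the first time $t_0$ at which a side of $\Phi^t$ collapses and invoke the trichotomy of Corollary~\ref{cor_smooth}. Your explicit four-vector check that the merged vertex is a node precisely when $n_1+n_2=1$, and that the new corner of $\Phi^{t_0+}$ is again unimodular, is more detailed than what the paper spells out.

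There is, however, a genuine gap. Your transition analysis assumes that a \emph{single} side $S$ collapses at $t^*$, so that its two endpoints merge into a $4$-valent vertex. But adjacent sides $S$ and $S_2$ may collapse simultaneously at some $t_0<1$; the merged vertex is then at least $5$-valent and your node verification does not apply. The paper treats this case explicitly: if $S$ is of type~(c) and $S_2$ also collapses, one runs the same normalization on $S_2$. Since the neighbour $S$ contributes $n_1'=0$ in those coordinates, type~(c) for $S_2$ forces $n_2'=1$, and transforming back one finds that $S_3$ is parallel to $S_1$. Thus when $S$ and $S_2$ vanish the face is pinched between two parallel sides meeting at a point, hence has already collapsed---contradicting $t_0<1$. (The paper writes ``$S_3$ parallel to $S_2$''; the computation actually gives $S_1$.) Your iteration step needs this case, or an argument ruling it out.

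Your final-paragraph worry about ``synchronization'' admits a simpler resolution than the one you sketch. In your normalization $\Phi^t$ is contained in the strip $\{-n_1y+ct\le x\le n_2y+1-ct\}$ for $y\ge ct$, whose horizontal width at height $y$ equals $(n_1+n_2)(y-ct)+|S^t|$. At the collapse time $|S^{t_0}|=0$, so when $n_1+n_2\le 0$ this width is nonpositive for every $y\ge ct_0$: the whole face has collapsed to a point or interval. Since $\p$ lies in the interior of $\Phi^t$ for $t<1$, this forces $t_0=1$. No appeal to ``every side shifts by the same $ct$-parameter'' (which is not literally true in Euclidean terms) is needed.
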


\begin{proof}%[Proof of Lemma~\ref{lemma_smooth}]
\label{proof_lemmasmooth}
The combinatorial type of $\Phi_t$ can only change when at least one of the sides of the $\Phi^t$ is getting shrinked to a point for some $t$. Choose the minimal such $t=t_0$, and denote one of the shrinking sides by $S$. Corollary~\ref{cor_smooth} tells us that cases a), b) correspond to collapsing the face, so $t_0=1$, hence in these cases the lemma is proven.   

We assume that $t_0<1$ and the case c) in Corollary~\ref{cor_smooth} takes place. 

If neither $S_1$ nor $S_2$ gets contracted when we pass from $C(f)$ to $C(\Add_{ij}^{ct_0} f)$, then we see a {\it nodal perestroika} (Definition~\ref{def_perestroika}).
If $S_2$ is contracted by passing from $C(f)$ to $C(\Add_{ij}^{ct_0} f)$, then the direct computation using Corollary~\ref{cor_smooth} c) implies that the side $S_3$ of $\Phi$, which is next after $S_2$, is parallel to $S_2$ and therefore the whole face $\Phi$ is contracted by $\Add_{ij}^{ct_0}$ which is a contradiction. The case when $S_1$ is contracted is handled by the same argument.
\end{proof}

\begin{corollary}
The edges of $C(\Add_{ij}^{ct} f)\cap\Phi$ for $0\leq t<1$ have weight $1.$  
\end{corollary}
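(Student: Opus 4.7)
The plan is to deduce this directly from Lemma~\ref{lemma_smooth} by combining it with the local normal forms recorded in Definition~\ref{def_smoothvertex}. Since $\p\in\Phi$ and $G_\p$ only raises the coefficient $a_{kl}$ of the monomial dominating at $\p$, the face $\Phi^t$ of the curve $C(\Add_{ij}^{ct}f)$ containing $\p$ is contained in the original face $\Phi$. Consequently the set $C(\Add_{ij}^{ct}f)\cap\Phi$ is (after possibly removing pieces of $\partial\Phi$ shared with $C(f)$) precisely the boundary of $\Phi^t$ inside $\Phi$, so the corollary is the statement that every edge of $\partial\Phi^t$ has weight one.

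First I would fix $t\in[0,1)$ and an edge $E$ of $\partial\Phi^t$, and recall that the weight of an edge of a tropical curve is constant along its interior; so it suffices to exhibit a single point of $E$ at which the weight can be read off as $1$. Choose an endpoint $V$ of $E$ — such an endpoint exists and is a vertex of $C(\Add_{ij}^{ct}f)$, namely a vertex of $\Phi^t$. By Lemma~\ref{lemma_smooth}, every vertex of $\Phi^t$ for $t\in[0,1)$ is smooth or nodal in the sense of Definition~\ref{def_smoothvertex}.

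If $V$ is smooth, the restriction of $\Add_{ij}^{ct}f$ near $V$ is $\min(x,y,0)$ after an $SL(2,\ZZ)$-change of coordinates, so the three edges leaving $V$ have primitive directions $(1,0)$, $(0,1)$, $(-1,-1)$, and in particular for any interior point near $V$ of any of these edges the restriction of $f$ is $\min(0,mx)$ with $m=\pm 1$, giving weight $1$. If instead $V$ is nodal, the local model is $\min(x,y,0,x+y)$, and the four outgoing edges again are primitive of weight $1$ by the same inspection. In either case the edge $E$ has weight $1$ at the point chosen near $V$, and hence everywhere.

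The only subtlety I see is the degenerate possibility that $\Phi^t$ has no vertices at all in the interior of $\Omega$, e.g. if $\Phi^t$ is bounded solely by $\partial\Omega$; but this cannot happen while $t<1$ because the face is still shrinking inward from $\partial\Phi\subset C(f)$, so at least one endpoint of $E$ is a genuine vertex of $C(\Add_{ij}^{ct}f)$ covered by Lemma~\ref{lemma_smooth}. No further computation is required.
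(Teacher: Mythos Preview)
Your approach is the same as the paper's intent (the corollary is stated without proof, as an immediate consequence of Lemma~\ref{lemma_smooth}), and your use of the local models from Definition~\ref{def_smoothvertex} is exactly right. However, your reduction step contains an imprecision that becomes an actual gap once a nodal perestroika has occurred.

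You claim that $C(\Add_{ij}^{ct}f)\cap\Phi$ is, up to pieces of $\partial\Phi$, exactly $\partial\Phi^t$. This is false for $t$ beyond the first perestroika: look at the third frame of Figure~\ref{fig_ShrinkPhi}. There the vertex in the upper right (the trivalent one not on the small rectangle $\Phi^t$) lies strictly inside the original face $\Phi$, and the two edges leaving it toward $\partial\Phi$ are edges of $C(\Add_{ij}^{ct}f)\cap\Phi$ whose endpoints are \emph{not} vertices of $\Phi^t$. So Lemma~\ref{lemma_smooth}, which speaks only about vertices of $\Phi^t$, does not directly apply to them, and your argument as written does not cover these edges.

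The fix is short. Any edge of $C(\Add_{ij}^{ct}f)$ lying in $\Phi\setminus\Phi^t$ and not bordering $\Phi^t$ is defined by two monomials different from $(i,j)$, so it does not move as $t$ varies. Trace it back to the smallest $t'\le t$ at which it first lies in $\Phi\setminus\Phi^{t'}$: either $t'=0$ and the edge emanates from a unimodular corner of $\Phi$ (hence weight $1$), or $t'>0$ and at that instant the edge emanates from a nodal vertex of $\Phi^{t'}$, which \emph{is} covered by Lemma~\ref{lemma_smooth} (hence weight $1$). Adding this sentence completes your proof.
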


\section{$\QQ$-polygons}
\label{sec_qpolygons}
\begin{definition}
\label{def_qpolygon}
Let $\Delta\subset\RR^2$ be a finite intersection of half-planes (at least one) with rational slopes.
We call $\Delta$ a {\it $\QQ$-polygon} if it is a closed set with non-empty interior.
\end{definition}

\begin{definition}
\laber{def_minimalform}
We say that a tropical series $f$ on $\Omega$ is presented in the {\it small canonical form} if $f$ is written as 
\begin{equation}
f(x,y) = \min_{(i,j)\in\B_f}(ix+jy+a_{ij})
\end{equation}
where all $a_{ij}$ are taken from the canonical form and $\B_f\subset \A_\Omega$ consists of monomials $ix+jy+a_{ij}$ which are equal to $f$ at at least one point in $\Omega^\circ$.
\end{definition}

\begin{example}
The small canonical form for Example~\ref{ex_bigform} is $\min(x,y,1-x,1-y,1/3)$.
\end{example}

\begin{remark}
\laber{rem_smallcanonicalGp}
Note that for a $\QQ$-polygon $\Delta$, the small canonical form of the function $l_{\Delta}$ is a $\Delta$-tropical {\bf polynomial}, i.e. it has only finite number of monomials. It follows from the estimate in Proposition~\ref{prop_upperbound} that  the small canonical form of $G_P f$ is a $\Delta$-tropical polynomial too, for all $\Delta$-tropical polynomials $f$.
\end{remark}

Let us fix a $\QQ$-polygon $\Delta$. Consider a $\Delta$-tropical polynomial $f$  in the small canonical form (Definition~\ref{def_minimalform}). Let us analyze the behavior of $f$ near the boundary. 

In the neighborhood of each side $S$ of $\Delta$ the function $f$ can be
locally written as $(x,y)\mapsto ix+jy+a_{ij},$ where $(i,j)\in \B_f$ %in the Newton polygon $\N(f)$ (Definition~\ref{def_newton}) of $f$ 
and the vector $(i,j)$ is orthogonal to $S$. This integer vector $(i,j)$ is a multiple of a
certain primitive vector, i.e. $(i,j)=m_f(S)n(S),$ where $n(S)$ is {\it the inward
  primitive normal} vector to $S$ of $\Delta$ and $m_f(S)\in\ZZ_{>0}$ is a number.   Thus, we constructed the function $m_f$ on the set $S(\Delta)$ of the
sides of $\Delta$, $m_f\colon S(\Delta)\rightarrow \mathbb{Z}_{>0}$.
\begin{definition}
\laber{def_qdegree} The aforementioned function
  $m_f$ is called the {\it quasi-degree} for the  $\Delta$-tropical curve $C$.
\end{definition}
\begin{remark}
Note that $m_f(S)n(S)\in \B_f$ for each $S\in S(\Delta).$ The convex hull of the set $$\{ m_f(S)n(S)\}_{S\in S(\Delta)}$$
contains $\B_f$, since the monomials from the outside of this
convex hull can not contribute to $f|_\Delta$. 
\end{remark}

\begin{definition}
\laber{def_nicedegree}
A quasi-degree $m_f$ is called {\it nice} if for each side $S\in S(\Delta)$ with $m_f(S)>1$ we have $m_f(S_1)=m_f(S_2)=1$ for the neighboring sides $S_1,S_2$ of $S$.
\end{definition}

\begin{theorem}
\label{th_verge}
Let $\Delta$ be a unimodular $\QQ$-polygon (Definition~\ref{def_smoothcorner}). Suppose that a quasi-degree $d$ on $\Delta$ is nice. Then for any $\e>0$ there exists a tropical $\Delta$ polynomial $g$ such that $m_g=d$, the curve $C(g)$ is smooth (Definition~\ref{def_nodal}) and is contained in the $\e$-neighborhood of $\partial \Delta$.
\end{theorem}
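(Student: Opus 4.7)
The plan is to construct $g$ explicitly as a minimum of carefully chosen monomials: one constant and, for each side $S$ of $\Delta$, a family of $d(S)$ monomials parallel to $S$. Concretely, for each side $S$ let $L_S(x,y) = n(S)\cdot(x,y) - \min_{\Delta} n(S)\cdot(\cdot)$; this is the non-negative linear function that vanishes on $S$ and is strictly positive on $\Delta^{\circ}$. Fix a small constant $c_0 > 0$ (to be adjusted later), and for each side $S$ and each $k = 1, \dots, d(S)$ set the coefficient $b_{S,k} = \frac{c_0}{d(S)^2}(d(S)-k)^2$. Define
\[
g = \min\Bigl(c_0,\ \min_{S \in S(\Delta)}\min_{1 \leq k \leq d(S)}\bigl(k L_S + b_{S,k}\bigr)\Bigr).
\]

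First I would verify that $g$ is a $\Delta$-tropical polynomial with quasi-degree $d$. Every monomial is non-negative on $\Delta$, and on side $S$ the monomial with $k = d(S)$ equals $0$ (since $L_S$ vanishes on $S$ and $b_{S,d(S)} = 0$), so $g|_{\partial\Delta} = 0$. Near the interior of $S$ the monomial $d(S)\, L_S$ is the unique minimum because $L_S$ is small there while $L_{S'}$ for $S' \neq S$ remains bounded below by a positive constant depending on the geometry of $\Delta$; hence $m_g(S) = d(S)$. Next I would analyze the collar around each side: the sequence $c_0, b_{S,1}, b_{S,2}, \dots, b_{S,d(S)-1}, 0$ is strictly convex (its second differences equal $2c_0/d(S)^2 > 0$), so its lower convex hull is a polygonal path through every intermediate point. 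Translated to tropical language, near $S$ the curve $C(g)$ has exactly $d(S)$ weight-one edges parallel to $S$, located at $L_S$-values $A_S, 3A_S, \dots, (2d(S)-1)A_S$ with $A_S = c_0/d(S)^2$. Choosing $c_0$ small enough makes all these edges lie within Euclidean distance $\e$ from $\partial\Delta$.

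The hard part is checking smoothness near each corner $V$. By unimodularity of $\Delta$ I can apply an $SL(2,\ZZ)$-change of coordinates so that the two sides meeting at $V$ have inward primitive normals $n(S_1) = (1,0)$, $n(S_2) = (0,1)$, $V = 0$, and $\Delta$ is locally the first quadrant, so $L_{S_1}(x,y) = x$ and $L_{S_2}(x,y) = y$. By niceness at least one of $d(S_i)$ equals $1$; assume $d_2 := d(S_2) = 1$ and write $m = d(S_1)$. Near $V$ the only relevant monomials are $c_0$, $y$, and $kx + b_{S_1,k}$ for $k = 1,\dots,m$ (monomials from non-adjacent sides stay uniformly far from the minimum provided $c_0$ is small compared with $\Delta$). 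The Newton polygon of these monomials is the triangle $\mathrm{conv}\{(0,0),(m,0),(0,1)\}$, and I claim the induced dual subdivision is the triangulation into unit triangles $T_k = \mathrm{conv}\{(0,1),(k,0),(k+1,0)\}$ for $k = 0, \dots, m-1$. This reduces to verifying that for each $k$ and each $j \neq k, k+1$ the lift of $(j,0)$ lies above the affine plane through the three lifted vertices of $T_k$; a direct computation using $b_{S_1,k} = A_{S_1}(m-k)^2$ gives the discrepancy $A_{S_1}(k-j)(k-j+1) \geq 0$, with equality only for $j \in \{k,k+1\}$. Each $T_k$ being a unit triangle then implies that the corresponding vertex of $C(g)$ is smooth in the sense of Definition~\ref{def_smoothvertex}.

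The main obstacle is bookkeeping in this final step: choosing $c_0$ simultaneously small enough to (i) localize each side's monomial activity to a thin collar around that side, (ii) keep the curve within $\e$ of $\partial\Delta$, and (iii) exclude interference between non-adjacent sides (so that at every point of $C(g)$ only monomials from at most one side, or from two adjacent sides at a corner, participate). All three conditions reduce to $c_0$ being smaller than an explicit positive quantity depending on $\Delta$, $d$, and $\e$, so a single sufficiently small $c_0$ makes the construction work.
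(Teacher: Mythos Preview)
Your proof is correct and follows essentially the same approach as the paper: both build $g$ as the minimum of a small constant together with, for each side $S$, a family of $d(S)$ monomials $k\,L_S+\text{(convex constants)}$, then verify smoothness at the corners via the unit-triangle dual subdivision, using niceness to reduce to $\min(c_0,y,kx+\text{const})$. The only cosmetic difference is your choice of coefficients $b_{S,k}=\tfrac{c_0}{d(S)^2}(d(S)-k)^2$ versus the paper's triangular-number pattern $-\tfrac{l(l+1)}{2d(S)}\delta$; both give strictly convex sequences, and your choice has the minor advantage that $g$ actually vanishes on $\partial\Delta$.
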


\begin{proof}
Let $\{S_k\}_{k=1}^n$ be the sides of $\Delta$. Suppose that each side $S_k$ is given by $i_kx+j_ky+a_k=0$ and all these linear functions are non-negative on $\Delta$.  Choose small $\delta>0$. For each $k=1,\dots, n$ we consider the following tropical polynomial: 
$$f_k(x,y) = \min_{l=1,\dots,d(S_k)}\left(l(i_kx+j_ky+a_k)-\frac{l(l+1)}{2d(S_k)}\delta\right).$$
The tropical curve defined by $f_k$ is the collection of $d(S_k)-1$ lines parallel to $S_k$ with distance $\delta$ between them. Define $g$ as 

$$g(x,y) = \min\big(\e/2, \min_{k=1,\dots, n} f_k(x,y)\big).$$
Clearly, $C(g)\cap \Delta$ is contained in the $\e$-neighborhood of $\partial\Delta$. It is a local calculation near each corner that $C(g)\subset \RR^2$ is a smooth tropical curve: since the quasi-degree is nice, so near a corner of $\Delta$, $C(g)$ is given locally by $$\min(\e/2,x,y, 2y-\frac{1}{n}, 3y-\frac{3}{n}, 4y-\frac{6}{n},\dots)$$ where $n=\frac{d(S_k)}{\delta}$. Such a curve has an edge locally given by $x=\e/2$ and, if $\delta$ is small enough, $d(S_k)-1$ edges locally given by $y=\frac{k}{n},1\leq k\leq d(S_k)$, and these edges meet in smooth position, see Figure~\ref{fig_g} for an illustration. 

\end{proof}

\begin{figure}
\begin{tikzpicture}
\begin{scope}[scale=0.6]
\draw[dashed] (0,5)--(0,0)--(5,0);
\draw (0,0)--(0.33,1.33)--(0.33,5);
\draw (0.33,1.33)--(0.66,2.33)--(0.66,5);
\draw (0.66,2.33)--(1,3)--(1,5);
\draw (1,3)--(1.7,3.7)--(1.7,5);
\draw (1.7,3.7)--(5,3.7);
\end{scope}
\begin{scope}[scale=0.15, xshift=60cm, rotate=90]
\draw(2,0)--++(0,4)--++(18,36)--++(0,-38)--++(-2,-2)--++(-16,0);
\draw[fill=black](0,0)--++(2,0)--++(0,4)--++(-2,-4);
\draw[fill=black](20,0)--++(-2,0)--++(2,2)--++(0,-2);
\draw(2,0)--++(2,1)--++(1,1)--++(0,5)--++(-3,-3);
\draw(4,1)--++(13,0)--++(1,-1);
\draw(5,2)--++(12,0)--++(0,-1);
\draw(20,40)--++(-1,-4)--++(-1,-3)--++(-13,-26);
\draw(18,33)--++(0,-30)--++(-1,-1);
\draw(19,36)--++(0,-33)--++(-1,0);
\draw(20,2)--++(-1,1);
\end{scope}
\end{tikzpicture}
\caption{Left: the curve corresponding to the function $g$ from Theorem~\ref{th_verge}, near a corner, $d(S_k)=4$. Each vertex $V$ of the curve is smooth because $g$ is locally presented as $\min(y,kx,(k+1)x)$ near $V$. Right: an example of $C(g)$ for $g$ in Lemma~\ref{lemma_nicest}. Colored corners symbolize that  a quasidegree was not nice, and we made blow-ups at these corners.}
\label{fig_g}
\end{figure}
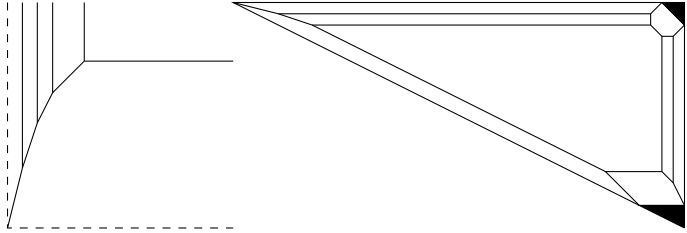

\section{Exhausting polygons}
\label{sec_exhausting}

\begin{definition}
\laber{def_fOmegaP}
Let $\p_1,\dots \p_n\in \Omega^\circ$ be different points, $P=\{\p_1,\dots,\p_n\}$. We denote by $f_{\Omega,P}$ the pointwise minimum among all $\Omega$-tropical
series non-smooth at all the points $\p_1,\dots,\p_n$.
\end{definition}

\begin{lemma}
\laber{lemma_levelsets}
If $\Omega$ is bounded, then for any $\e>0$ the set $\Omega_\e=\{x\in\Omega|f_{\Omega,P}\geq\e\}$ is a $\QQ$-polygon and $f_{\Omega,P}|_{\Omega_\e}$ is a tropical polynomial. 
\end{lemma}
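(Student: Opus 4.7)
My plan is to use Lemma~\ref{lemma_estimate} to reduce $f_{\Omega,P}$, on a compact neighborhood $K$ of $\Omega_\e$ chosen strictly inside $\Omega^\circ$, to a finite tropical polynomial $F$, and then to identify $\Omega_\e$ with the super-level set $\{F\geq\e\}\subset\RR^2$, which is manifestly a finite intersection of rational half-planes.

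Since every $\Omega$-tropical series takes values in $\RR_{\geq 0}$, $f_{\Omega,P}$ coincides with $G_P 0_\Omega$ and hence is an $\Omega$-tropical series by Lemma~\ref{lemma_tropicalseries}: in particular it is continuous on $\Omega$ and vanishes on $\partial\Omega$. Combined with boundedness of $\Omega$, this forces $\Omega_\e$ to be a compact subset of $\Omega^\circ$. Take $K:=\{f_{\Omega,P}\geq\e/2\}$: the same argument shows $K$ is compact in $\Omega^\circ$, and by continuity of $f_{\Omega,P}$ the open set $\{f_{\Omega,P}>\e/2\}$ is contained in $K$, so $\Omega_\e\subset\{f_{\Omega,P}>\e/2\}\subset K^\circ$ (interior in $\RR^2$). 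Set $C:=\max_K f_{\Omega,P}$. Lemma~\ref{lemma_estimate} applied to $U=\Omega^\circ$, this $K$, and this $C$ produces a finite $\mathcal{M}\subset\ZZ^2$ such that every monomial $(i,j)\notin\mathcal{M}$ of the canonical form of $f_{\Omega,P}$ (Definition~\ref{def_canonicalseries}) strictly exceeds $C$ on $K$; indeed, non-negativity on $\Omega$ of each canonical monomial makes the first condition in the definition of $\mathcal{M}$ trivially satisfied by $d=a_{ij}$. Therefore on $K$, and in particular on $\Omega_\e$,
\[
f_{\Omega,P}(x,y)=F(x,y):=\min_{(i,j)\in\mathcal{M}}(ix+jy+a_{ij}),
\]
a tropical polynomial on the whole plane, which already proves the second assertion of the lemma.

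It remains to show $\Omega_\e=\{F\geq\e\}$ as subsets of $\RR^2$; granted this, the right-hand side is exhibited as the finite intersection $\bigcap_{(i,j)\in\mathcal{M}}\{ix+jy+a_{ij}\geq\e\}$ of rational half-planes, with non-empty interior as soon as $\e<\max f_{\Omega,P}$ (it then contains a neighborhood of any maximizer), so it is a $\QQ$-polygon. The inclusion $\Omega_\e\subset\{F\geq\e\}$ is immediate. For the converse, $F$ is concave so $\{F\geq\e\}$ is closed, convex, and hence connected; on $\partial K$ (the boundary taken in $\RR^2$) one has $F=f_{\Omega,P}=\e/2<\e$, so $\partial K\cap\{F\geq\e\}=\varnothing$. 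Any point of $\{F\geq\e\}$ outside $K$ could be joined by a straight segment---still contained in $\{F\geq\e\}$ by convexity---to any point of $\Omega_\e\subset K^\circ$, yet this segment would have to meet $\partial K$, a contradiction. This convexity/connectedness step is the main obstacle: a priori the polygon $\{F\geq\e\}\subset\RR^2$ bears no relation to $\partial\Omega$, and what makes the argument work is precisely the cushion between $\Omega_\e$ and the auxiliary $K$.
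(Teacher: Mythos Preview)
Your argument is correct, and it is somewhat more explicit than the paper's. The paper also starts from $f_{\Omega,P}=G_P0_\Omega$ and Lemma~\ref{lemma_tropicalseries}, but then argues indirectly: if $C(f_{\Omega,P})\cap\Omega_\e$ had infinitely many vertices, an accumulation point would contradict the local tropical-polynomial property (Definition~\ref{def_tropseries}); finitely many vertices then forces finitely many monomials on $\Omega_\e$. You instead go straight to Lemma~\ref{lemma_estimate} on the cushion $K=\{f_{\Omega,P}\geq\e/2\}$, which gives the finite monomial set $\mathcal{M}$ without passing through vertex-counting. The payoff of your route is that you get a concrete global polynomial $F$ on $\RR^2$ and can then identify $\Omega_\e$ with $\{F\geq\e\}$ via convexity and the $\partial K$ barrier; the paper's proof leaves the ``$\Omega_\e$ is a $\QQ$-polygon'' conclusion implicit. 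One small caveat worth flagging: your convexity step needs $\Omega_\e\neq\varnothing$ (to pick the endpoint $w$ of the segment), so the argument, like the statement itself, tacitly assumes $\e<\max f_{\Omega,P}$.
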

\begin{proof}
Note that $G_P 0_\Omega=f_{\Omega,P}(x)$ by the definition of the latter, so it follows from Lemma~\ref{lemma_tropicalseries} that $f_{\Omega,P}$ is continuous and vanishes at $\partial\Omega$. Since $\Omega$ is bounded, the set  $f_{\Omega,P}=\e$ is a curve disjoint
from $\partial\Omega.$ We claim that the intersection of $\Omega_\e$ with $C(f_{\Omega,P})$ is a graph with a finite number of vertices. \m{a tropical series on level sets is just a tropical polynomial} Suppose the contrary. Then a sequence of vertices of this graph converges to a point $z\in\Omega^\circ.$ Thus, there is no neighborhood of $z$ where the
series $f_{\Omega,P}$ can be represented by a tropical
polynomial, which is a contradiction with Definition~\ref{def_tropseries}.  The finiteness of the number of vertices implies that there is only a finite number of monomials participating in the restriction of $f_{\Omega,P}$ to the domain $\Omega_\e,$ therefore the restriction is a tropical polynomial.
\end{proof}

%\begin{remark}
%For a convex domain $\Omega$ we constructed a canonical series of
%polygon $\Omega_\varepsilon\to \Omega$. This could lead to new toric insights.
%\end{remark}

\begin{lemma}
\laber{lemma_levelset2}
In the above hypothesis, we extend $f_{\Omega_\e,P}$ to $\Omega$ using the presentation of $f_{\Omega_\e,P}$ in the small canonical form (Definition~\ref{def_minimalform}). 
In the hypothesis of the previous lemma, if
$f_{\Omega,P}(\p)\geq\e$ for each $\p\in P$, then  we have $f_{\Omega,P}=f_{\Omega_\e,P}+\e$ on $\Omega_\e$. Also $f_{\Omega_\e,P}+\e\geq f_{\Omega,P}$ on $\Omega$.
\end{lemma}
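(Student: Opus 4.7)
The plan is to establish the two claims in two steps: first the easy half of the equality on $\Omega_\e$, then the global inequality on $\Omega$ (which will supply the remaining direction of the equality).

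For the easy half, $f_{\Omega_\e,P} + \e \leq f_{\Omega,P}$ on $\Omega_\e$, I would show that $h := f_{\Omega,P}|_{\Omega_\e} - \e$ belongs to $V(\Omega_\e, P, 0)$. By the definition $\Omega_\e = \{f_{\Omega,P}\geq \e\}$, $h$ is non-negative and vanishes on $\partial\Omega_\e$ (where $f_{\Omega,P}=\e$). The previous lemma says $f_{\Omega,P}|_{\Omega_\e}$ is a tropical polynomial, so $h$ is too, and Lemma~\ref{lemma_usualisomegatropical} upgrades it to an $\Omega_\e$-tropical series. The hypothesis $f_{\Omega,P}(\p)\geq\e$ ensures $P\subset\Omega_\e$, and subtracting a constant preserves non-smoothness at each $\p\in P$. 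Hence $h\in V(\Omega_\e,P,0)$, and Definition~\ref{def_gp} gives $f_{\Omega_\e,P}\leq h$.

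For the global inequality $f_{\Omega_\e,P}^{\mathrm{ext}} + \e \geq f_{\Omega,P}$ on $\Omega$, write $g := f_{\Omega_\e,P}$ in small canonical form $g = \min_{(i,j)\in\B}(ix+jy+a_{ij})$ with $\B$ finite (Remark~\ref{rem_smallcanonicalGp}), and let $\tilde g(x,y) := \min_{(i,j)\in\B}(ix+jy+a_{ij})$ be the extension to $\Omega$. Since $f_{\Omega,P}$ is a minimum over the monomials of its canonical form, it suffices to check that for every $(i,j)\in\B$ the monomial $ix+jy+a_{ij}+\e$ dominates $f_{\Omega,P}$ on $\Omega$; equivalently, $a_{ij}+\e \geq b_{ij}$ where $b_{ij} := \sup_\Omega(f_{\Omega,P}(x,y) - ix - jy)$ is the canonical coefficient on $\Omega$.

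The crucial step is to show that $b_{ij} = a_{ij}+\e$. Since $(i,j)\in\B$, the monomial is active in $g$'s canonical form at some $(x^*,y^*)\in\Omega_\e^\circ$; Part~1 then gives $f_{\Omega,P}(x^*,y^*) \geq g(x^*,y^*) + \e = ix^*+jy^*+a_{ij}+\e$, yielding $b_{ij}\geq a_{ij}+\e$. For the reverse inequality, one argues that $(i,j)$ is also active in the canonical form of $f_{\Omega,P}$ at $(x^*,y^*)$, so the active face $F_{ij}$ on which $f_{\Omega,P}$ is locally the linear function $ix+jy+b_{ij}$ meets $\Omega_\e^\circ$ and $f_{\Omega,P} - ix - jy = b_{ij}$ is constant along $F_{ij}$. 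The supremum defining $b_{ij}$ is therefore attained inside $\Omega_\e$, and Part~1 is then seen to be an equality along $F_{ij}\cap\Omega_\e$, giving $b_{ij}=\sup_{\Omega_\e}(g-ix-jy)+\e = a_{ij}+\e$. The resulting identity $\tilde g + \e = \min_{(i,j)\in\B}(ix+jy+b_{ij})$ expresses $\tilde g+\e$ as a minimum over a sub-collection of the canonical monomials of $f_{\Omega,P}$, each of which bounds $f_{\Omega,P}$ from above on $\Omega$, so the minimum does too. Restricting to $\Omega_\e$ and combining with Part~1 yields the equality $f_{\Omega,P}=f_{\Omega_\e,P}+\e$ on $\Omega_\e$.

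The main obstacle is the active-face comparison—showing that the supremum defining $b_{ij}$ is attained inside $\Omega_\e$ for every $(i,j)\in\B$. This requires carefully identifying the active face of each monomial of $g$ with (the intersection with $\Omega_\e$ of) the active face of the same monomial of $f_{\Omega,P}$, and verifying that Part~1's inequality becomes equality along these faces. Once this bookkeeping is in place the proof reduces to the simple canonical-form comparison above.
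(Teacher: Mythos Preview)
Your Part~1 (the inequality $f_{\Omega_\e,P}+\e\leq f_{\Omega,P}$ on $\Omega_\e$) is correct and is exactly the paper's first step.

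Your Part~2, however, takes a different route from the paper and contains a genuine gap. The paper does \emph{not} try to match canonical coefficients $a_{ij}+\e=b_{ij}$. Instead it argues geometrically: from Part~1 and the equality of both functions on $\partial\Omega_\e$ it follows that the quasi-degree of $f_{\Omega_\e,P}$ on each side of $\Omega_\e$ is at most the quasi-degree of $(f_{\Omega,P}-\e)|_{\Omega_\e}$ on that side. Therefore, moving outward from $\partial\Omega_\e$ towards $\partial\Omega$, the extended $f_{\Omega_\e,P}+\e$ decreases no faster than $f_{\Omega,P}$, so $f_{\Omega_\e,P}+\e\geq f_{\Omega,P}$ on $\Omega\setminus\Omega_\e$. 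In particular $f_{\Omega_\e,P}+\e\geq 0$ on all of $\Omega$, and one finishes (for instance by gluing $f_{\Omega_\e,P}+\e$ on $\Omega_\e$ with $f_{\Omega,P}$ on $\Omega\setminus\Omega_\e$ to obtain an $\Omega$-tropical competitor that is non-smooth at $P$, hence $\geq f_{\Omega,P}$).

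The gap in your argument is the sentence ``one argues that $(i,j)$ is also active in the canonical form of $f_{\Omega,P}$ at $(x^*,y^*)$''. You give no reason for this, and there is none available at this stage: knowing that $(i,j)$ is the active monomial for $f_{\Omega_\e,P}$ at $(x^*,y^*)$ tells you nothing about which monomial of $f_{\Omega,P}$ is active there, unless you already know the two functions differ by a constant on $\Omega_\e$ --- which is precisely the equality you are trying to prove. The follow-up claim that ``Part~1 is then seen to be an equality along $F_{ij}\cap\Omega_\e$'' is circular for the same reason. You correctly flag this active-face bookkeeping as ``the main obstacle'' in your last paragraph, but you do not actually carry it out; the paper's quasi-degree comparison on $\partial\Omega_\e$ is exactly the device that sidesteps this obstacle.
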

\begin{proof}
On $\Omega_\e$ we have that $f_{\Omega,P}-\e\geq f_{\Omega_\e,P}$ by the definition of the latter. Then, two functions $f_{\Omega_\e,P}+\e, f_{\Omega,P}$ are equal on $\partial\Omega_\e$ and by the previous line the quasi-degree of $f_{\Omega_\e,P}$ is at most the quasi-degree of $(f_{\Omega,P}-\e)|_{\Omega_\e}$. Hence $f_{\Omega,P}$ can not decrease slowly than $f_{\Omega_\e,P}$ when we move from $\partial\Omega_\e$ towards $\partial\Omega$. Therefore $f_{\Omega_\e,P}+\e\geq f_{\Omega,P}$ on $\Omega\setminus\Omega_\e$. Since $f_{\Omega_\e,P}+\e\geq 0$ on $\Omega$ we obtain the estimate $f_{\Omega_\e,P}+\e\geq f_{\Omega,P}$ on $\Omega$ which concludes the proof. 
\end{proof}

Note that a $\QQ$-polygon is not necessary compact. It is easy to verify that a $\QQ$-polygon is admissible (Definition~\ref{def_omegaadmissible}). The next lemma provides us with a family of compact $\QQ$-polygons exhausting $\Omega$.

\begin{lemma}%[\citepage{proof_lemmaepsfun}]
\laber{lemma_epsfun} For any compact \m{we need K: for big slopes we have s lot of of trash near boundary, but Q-polygon is ok (finite number of slopes)} set $K\subset \Omega^\circ$ such that $P\subset K$ and for any $\e>0$ small enough there exists a $\QQ$-polygon $\Omega_{\e,K}\subset \Omega$ such that $B_{3\e}(K)\subset \Omega_{\e,K}$ and the following holds: 
$$f_{\Omega,P}= f_{\Omega_{\e,K},P}+\e \text{\ on\ } B_{3\e}(K).$$ %\text{\ and\ }f_{\Omega,P} \leq f_{\Omega_{\e,K},P}+\e \text{\ on\ }B_{3\e}(K). \vv$$  
\end{lemma}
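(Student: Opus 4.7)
\emph{Choice of $\e$ and the bounded case.} My plan is to reduce to the bounded case (handled by Lemmas~\ref{lemma_levelsets} and \ref{lemma_levelset2}) via a truncation argument based on the finiteness statement of Lemma~\ref{lemma_estimate}. First, compactness of $K\subset\Omega^\circ$ gives $\overline{B_{3\e}(K)}\subset\Omega^\circ$ for all small $\e$. The function $f_{\Omega,P}=G_P 0_\Omega$ is a continuous $\Omega$-tropical series (Lemma~\ref{lemma_tropicalseries}), strictly positive on $\Omega^\circ$ (it vanishes only on $\partial\Omega$), so by uniform continuity $f_{\Omega,P}\ge\e$ on $\overline{B_{3\e}(K)}$ once $\e$ is sufficiently small; in particular $f_{\Omega,P}(\p)\ge\e$ for every $\p\in P$, which is the hypothesis of Lemma~\ref{lemma_levelset2}. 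When $\Omega$ is bounded I simply take $\Omega_{\e,K}:=\{f_{\Omega,P}\ge\e\}$: Lemma~\ref{lemma_levelsets} makes it a $\QQ$-polygon, it contains $B_{3\e}(K)$ by the previous line, and Lemma~\ref{lemma_levelset2} yields $f_{\Omega,P}=f_{\Omega_{\e,K},P}+\e$ on $\Omega_{\e,K}$.

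\emph{Reduction to a bounded truncation.} For general admissible $\Omega$ I will first build a bounded admissible $\Omega^\sharp\subset\Omega$ with $\overline{B_{3\e}(K)}\subset(\Omega^\sharp)^\circ$ and $f_{\Omega^\sharp,P}=f_{\Omega,P}$ on $\overline{B_{3\e}(K)}$, and then apply the previous paragraph to $\Omega^\sharp$ to obtain the required $\Omega_{\e,K}\subset\Omega^\sharp\subset\Omega$. To construct $\Omega^\sharp$, apply Lemma~\ref{lemma_estimate} with $U=\Omega^\circ$, $K'=\overline{B_{3\e}(K)}$, and $C=\max_{K'}f_{\Omega,P}+1$, obtaining the finite set $\mathcal{M}\subset\ZZ^2$ of monomial indices that can contribute to $f_{\Omega,P}$ on $K'$. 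For each $(i,j)\in\mathcal{M}\setminus\{(0,0)\}$, pick a finite boundary point $z_{ij}\in\partial\Omega$ realising $\inf_\Omega(ix+jy)$; admissibility and convexity of $\Omega$ guarantee such a finite point exists, since if the infimum were only approached along a recession ray of $\Omega$ then the linear form would be constant along that ray and therefore realised at its finite base on $\partial\Omega$. Now choose a bounded $\QQ$-polygon $L$ (say a large rational square) whose interior contains $K'\cup\{z_{ij}\}_{(i,j)\in\mathcal{M}}$ and that is taken so large that every ``new'' direction $(i,j)\in\A_{\Omega\cap L}\setminus\A_\Omega$ produces a monomial $ix+jy-c_{ij}^{\Omega\cap L}$ exceeding $C$ throughout $K'$; set $\Omega^\sharp:=\Omega\cap L$.

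\emph{Main obstacle: the equality on $K'$.} The main analytic step is to prove $f_{\Omega^\sharp,P}=f_{\Omega,P}$ on $\overline{B_{3\e}(K)}$. By the choice of the $z_{ij}$, one has $c_{ij}^{\Omega^\sharp}=c_{ij}^{\Omega}$ for every $(i,j)\in\mathcal{M}$; by the sizing of $L$, every direction from $\partial L$ yields a monomial too large on $K'$ to contribute; and monomials outside $\mathcal{M}$ already in $\A_\Omega$ are too large by the very definition of $\mathcal{M}$. The cleanest verification uses Proposition~\ref{prop_gpsconvergence}: write $f_{\Omega,P}$ and $f_{\Omega^\sharp,P}$ as uniform limits of iterated applications of $G_{{\bf q}_k}$ to $0_\Omega$ and $0_{\Omega^\sharp}$ respectively. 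By Lemma~\ref{lem_singlegp}, each step modifies only a single coefficient, computed from the currently active monomials at ${\bf q}_k\in P\subset K'$, and by the previous observations these active monomials and their coefficients are the same in both ambient domains. Induction therefore shows that the two sequences coincide on $K'$ at every step; passing to the limit gives the desired equality $f_{\Omega^\sharp,P}=f_{\Omega,P}$ on $\overline{B_{3\e}(K)}$, and applying the bounded case to $\Omega^\sharp$ finishes the proof.
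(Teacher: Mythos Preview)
Your bounded case is correct and coincides with the paper's case A: take $\Omega_{\e,K}=\{f_{\Omega,P}\ge\e\}$ and invoke Lemmas~\ref{lemma_levelsets},~\ref{lemma_levelset2}.

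The reduction to the bounded case, however, has a genuine gap. The claim that every $(i,j)\in\mathcal M\setminus\{(0,0)\}$ admits a finite $z_{ij}\in\partial\Omega$ realising $c^\Omega_{ij}=\inf_\Omega(ix+jy)$ is false precisely when $\partial\Omega$ has a \emph{rational} asymptote orthogonal to $(i,j)$. Take $\Omega=\{xy\ge 1,\ x>0,\ y>0\}$ and $(i,j)=(0,1)$: here $c^\Omega_{0,1}=0$ is not attained. Your recession-ray argument does not save this: the minimizing sequence runs along the curved boundary, and although $\ell=y$ is indeed constant along the recession direction $(1,0)$, no ``finite base on $\partial\Omega$'' realises the value $0$. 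With $P=\{(2,2)\}$ one checks $f_{\Omega,P}(2,2)=l_\Omega(2,2)=2$, and the monomial $y$ (with coefficient $0$) is among those achieving this value; for any bounded truncation $\Omega^\sharp=\Omega\cap L$ one has $c^{\Omega^\sharp}_{0,1}>0$, hence $l_{\Omega^\sharp}(2,2)<2$ and $f_{\Omega^\sharp,P}(2,2)<f_{\Omega,P}(2,2)$. So no bounded $\Omega^\sharp$ satisfies the intermediate equality you need, and your scheme cannot produce $\Omega_{\e,K}$ in this case. (The target $\Omega_{\e,K}$ here is in fact unbounded, which Definition~\ref{def_qpolygon} permits.)

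The paper avoids this by a case split on the asymptotics of $\partial\Omega$. When both asymptotes are rational (case C, covering the hyperbola example) it shows directly that $\{f_{\Omega,P}\ge\e\}$ is a $\QQ$-polygon---possibly non-compact---by arguing that far out along each asymptote $f_{\Omega,P}=\e$ is cut out by the single monomial parallel to that asymptote; then Lemma~\ref{lemma_levelset2} applies. The mixed case E is reduced to C by cutting off the irrational end with a nearby rational half-plane. Your reduction-to-bounded strategy does work in the half-plane and purely irrational-asymptote cases (B and D), since there every relevant $c^\Omega_{ij}$ is attained; but to cover C and E you must either allow unbounded $\Omega_{\e,K}$ from the start or argue separately as the paper does.
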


%\begin{lemma}\label{lem_vnutrennie}
%Consider a function $F$ on $\Omega$ given by
%$F(x)=\sup G^\Delta_{\bar p}(x),$ where the supremum is taken over the set of all lattice polygons $\Delta\subset\Omega$ containing $x.$ Then $F$ coincides with $G_{\bar p}^\Omega.$
%\end{lemma}
%
%\begin{proof}
%It is clear that $G_{\bar p}^\Omega\geq F$ on $\Omega.$ Consider a
%point $q\in\Omega^\circ$ and $\varepsilon>0$ such that $G_{\bar
%  p}^\Omega(q)>\varepsilon$ and $G_{\bar p}^\Omega(p_i)>\varepsilon$
%for each $p_i\in{\bar p}.$  We are going to prove that
%$F(q)+\varepsilon\geq G_{\bar p}^\Omega(q)$. Lemma \ref{lem_levelsets} implies that the
%set $\Omega_\varepsilon=\{x\in\Omega |G_{\bar
%  p}^\Omega(x)\geq\varepsilon\}$ is a $\mathbb{Q}$-polygon. Evidently,
%$q\in\Omega_\varepsilon$ and $F\in V(\Omega_\varepsilon,{\bar p})$, therefore $F(q)\geq G_{\bar
%  p}^{\Omega_\varepsilon}(q)=G_{\bar p}^\Omega(q)-\varepsilon.$ 
%
%
%\end{proof}

%\begin{corollary}\label{cor_tropicallimit}
%Furthermore, under the above hypothesis, $C=\tilde C\backslash\partial\Omega$ is the zero set of an
%infinite tropical series $G_{\bar p}^\Omega$. Moreover, $C$ passes through the points
%$p_1,\dots,p_n$ and $\partial\Delta\setminus\overline{C}$ is the
%collection of intervals with rational slopes.
%\end{corollary}

\begin{proof}%[Proof of Lemma~\ref{lemma_epsfun}]
%\label{proof_lemmaepsfun}

Note that if $\Omega'\subset \Omega$, then $f_{\Omega',P}\leq f_{\Omega,P}$ automatically. We list several possible cases. A) $\Omega$ is a compact set, see Lemma~\ref{lemma_levelsets}. If $\Omega$ is not compact, then it is possible that B) $\Omega$ is a half-plane with the boundary of rational slope. Otherwise, $\partial\Omega$ has two asymptotes: C) of rational slope, D) of irrational slope, E) one of asymptotes is of rational slope and another is not.

Let $M = \max_K f_{\Omega,P}$. It follows from Lemma~\ref{lemma_estimate} that the set $I$ of monomials $(i,j)\in \ZZ^2$ such that there exists $a_{ij}$ such that $(a_{ij}+ix+jy)|_K\geq 0$ and $a_{ij}+ix+jy<M$ at a point of $K$ is finite. Only these monomials may contribute to $f_{\Omega,P}|_K$ and we are going to study their coefficients.

 For each $(i,j)\in I$ there are three possibilities:
i) for some $c$ the line $\{c+ix+jy=0\}$ is an asymptote of $\partial\Omega$; ii) for a compact set $K'$, containing $P$ and big enough, $ix+jy-\min_{K'\cap\Omega}(ix+jy)\geq 0$ on $\Omega$; iii) for a compact set $K'$, containing $P$ and big enough, $ix+jy-\min_{K'\cap\Omega}(ix+jy)> M$ on $K$.

In the case B) this implies that for $K'$ big enough the only monomials which contribute to $f_{K'\cap \Omega}$ are the multiples of the monomial giving $\partial\Omega$, therefore $f_{\Omega,P} = f_{\Omega\cap K', P}$, which reduces the proof to A).

The case D) is handled similarly: ii) is not possible, but i) implies that $f_{\Omega,P}\leq f_{\Omega\cap K',P}$ on $K$, which proves that $f_{\Omega,P}= f_{\Omega\cap K',P}$ on $K$.

In the case C) we prove that $\{f_{\Omega,P}\geq \e\}$ is a $\QQ$-polygon for $\e>0$ small enough. Indeed, let one of the asymptotes is given by $L=\{kx+ly+a_{kl} = 0\}$. Then, for the points $z\in L$ far enough from $P$ we have that the distance between $z$ and $\partial\Omega$ is at least $\e/2$ and therefore by Lemma~\ref{lemma_estimate}  the set $I'$ of monomials $a_{ij}+ix+jy$ which are non-negative on $\Omega$ and less than $\e$ at $z$ is finite. Therefore, by taking $K'$ big enough and containing all the points of intersection of the support lines to $\partial\Omega$ with directions in $I'$ (if there is no a point of intersection, it means that this is another asymptote and this is handled easily),  we may assume that $f_{\Omega,P} = \e$ is given by $L$ far enough from $P$ and the same for another asymptote. Therefore the curve $\{f_{\Omega}=\e\}$ is a $\QQ$-polygon.

The last case, E) is handled as follows: we take $K'$ as above and then find a line $L(x,y)=0$ with a rational slope close to the irrational slope of an asymptote, such that $L|_{K'\cap \Omega}\geq 0$ and we reduce the case to D) by considering $\Omega\cap \{L\geq 0\}$ instead of $\Omega$.  
\end{proof}

\begin{corollary}
\label{cor_epscurve}
Lemma~\ref{lemma_epsfun} implies that for $\e>0$ small enough the tropical
curves defined by $f_{\Omega,P} $ and $f_{\Omega_{\e,K},P}$ coincide on $K$,
i.e.  $$C(f_{\Omega,P})\cap K=C(f_{\Omega_{\e,K},P})\cap K.$$
\end{corollary}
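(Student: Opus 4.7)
The plan is to derive the corollary essentially as a direct consequence of Lemma~\ref{lemma_epsfun}, using the elementary but central observation that the corner locus of a tropical series is invariant under adding a real constant, and that it is a purely local invariant. So if two tropical series agree up to an additive constant on an open set, their corner loci on that open set coincide.

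Concretely, first I would invoke Lemma~\ref{lemma_epsfun} to obtain a $\QQ$-polygon $\Omega_{\e,K}\subset\Omega$ with $B_{3\e}(K)\subset\Omega_{\e,K}$ and
\[
f_{\Omega,P}(z) \;=\; f_{\Omega_{\e,K},P}(z)+\e \quad\text{for all } z\in B_{3\e}(K).
\]
Since $\dist(z,K)=0$ for $z\in K$, the set $K$ lies in the interior of $B_{3\e}(K)$, so the above identity in fact holds on an open neighborhood of $K$ inside $\Omega_{\e,K}^\circ$.

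Next I would note that $C(f)$, being the locus where $f$ fails to be locally affine, is unchanged under the translation $f\mapsto f+c$ for $c\in\RR$; this is immediate from Definition~\ref{def_tropicalanalitycal}. In particular, on $B_{3\e}(K)$ we have
\[
C(f_{\Omega,P})\cap B_{3\e}(K) \;=\; C(f_{\Omega_{\e,K},P}+\e)\cap B_{3\e}(K) \;=\; C(f_{\Omega_{\e,K},P})\cap B_{3\e}(K).
\]
Intersecting both sides with $K\subset B_{3\e}(K)$ then gives $C(f_{\Omega,P})\cap K = C(f_{\Omega_{\e,K},P})\cap K$, which is the claim.

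The main obstacle has already been dealt with in Lemma~\ref{lemma_epsfun}; here there is essentially no remaining difficulty, only the small check that $\Omega_{\e,K}$ is admissible (it is, being a $\QQ$-polygon, as noted just before the lemma) so that $f_{\Omega_{\e,K},P}$ is a bona fide $\Omega_{\e,K}$-tropical series with a well-defined corner locus, and the local/translation-invariance observation on $C(\cdot)$ that lets the equality of functions up to a constant pass to equality of tropical curves.
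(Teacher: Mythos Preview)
Your argument is correct and is precisely the intended one: the paper does not give a separate proof of this corollary, presenting it instead as an immediate consequence of Lemma~\ref{lemma_epsfun} via exactly the observation you make, namely that $f_{\Omega,P}$ and $f_{\Omega_{\e,K},P}$ differ by the constant $\e$ on a neighborhood of $K$, so their corner loci agree there.
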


\section{How to blow-up corners of a polygon}
\label{sec_blowup}
Let $p_1,p_2,q_1,q_2\in\ZZ$ such that $p_1q_2-p_2q_1\ne 0$ and let 
\begin{equation}
\label{eq_lambda}
\Lambda=\{(x,y)\in\RR^2|xp_1+yq_1\geq 0, xp_2+yq_2\geq 0\}.
\end{equation}
% Consider the canonical form of $l_\Lambda$ (see Definition \ref{def_generalweighteddistance}) $$l_\Lambda(x,y)=\min_{(i,j)\in\A}(ix+jy).$$

\begin{lemma}The set $\A_\Lambda$ (Definition~\ref{def_singlesupport}) is equal to the set
$$(\RR_{\geq 0}(p_1,q_1) \oplus \RR_{\geq 0}(p_2,q_2))\cap\ZZ^2.$$
\end{lemma}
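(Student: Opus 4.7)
The plan hinges on the observation that $\Lambda$ is a convex cone with apex at the origin: since $\Lambda$ is invariant under multiplication by positive scalars, the infimum of a linear form $ix+jy$ over $\Lambda$ is either $0$ (attained at $(0,0)$, which happens precisely when $ix+jy\geq 0$ on all of $\Lambda$) or $-\infty$. So as a first step I would rewrite Definition~\ref{def_singlesupport} in the equivalent form
\[
\A_\Lambda=\{(i,j)\in\ZZ^2 : ix+jy\geq 0\text{ for all }(x,y)\in\Lambda\}.
\]

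Next I would identify the corresponding real cone $C:=\{(i,j)\in\RR^2 : ix+jy\geq 0 \text{ on } \Lambda\}$ with $\RR_{\geq 0}(p_1,q_1)+\RR_{\geq 0}(p_2,q_2)$. The inclusion $\supseteq$ is immediate: for $\alpha,\beta\geq 0$ and $(x,y)\in\Lambda$, the combination
\[
(\alpha p_1+\beta p_2)x+(\alpha q_1+\beta q_2)y=\alpha(p_1x+q_1y)+\beta(p_2x+q_2y)
\]
is a non-negative sum of non-negative terms by the defining inequalities \eqref{eq_lambda}. For the reverse inclusion, because $p_1q_2-p_2q_1\neq 0$ the vectors $(p_1,q_1),(p_2,q_2)$ form a basis of $\RR^2$, so every $(i,j)\in\RR^2$ has a unique expression $(i,j)=\alpha(p_1,q_1)+\beta(p_2,q_2)$ with $\alpha,\beta\in\RR$. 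Evaluating $ix+jy$ on the two extremal rays of $\Lambda$ (the rays on which one of the defining inequalities of \eqref{eq_lambda} becomes equality while the other is strict) forces $\alpha\geq 0$ and $\beta\geq 0$ respectively, by a direct one-variable computation: on the ray $\{p_1x+q_1y=0,\ p_2x+q_2y>0\}$ the expression reduces to $\beta(p_2x+q_2y)$, hence $\beta\geq 0$, and symmetrically for $\alpha$.

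Intersecting $C$ with $\ZZ^2$ then yields the stated equality, and the direct-sum notation $\oplus$ is justified because the linear independence of $(p_1,q_1)$ and $(p_2,q_2)$ guarantees uniqueness of the decomposition $\alpha(p_1,q_1)+\beta(p_2,q_2)$. I do not anticipate any real obstacle: this is merely the two-dimensional instance of polar duality for polyhedral cones, made explicit by the change of basis above. The only minor bookkeeping is the sign of the determinant $p_1q_2-p_2q_1$ when parametrizing the extremal rays, but this does not affect the final statement.
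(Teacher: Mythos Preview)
Your proposal is correct and follows the same approach as the paper: decompose an integer vector in the basis $(p_1,q_1),(p_2,q_2)$ and test the sign of the resulting linear form on the boundary rays of $\Lambda$. The paper's own proof is a one-line version of this, recording only that if $\alpha<0$ or $\beta<0$ then $px+qy$ is negative on one side of $\Lambda$; you have simply made the cone observation and both inclusions explicit.
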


\begin{proof}
Any vector $(p,q)\in\ZZ^2$ can be written as $(p,q)=\alpha\cdot (p_1,q_1)+\beta\cdot (p_2,q_2)$ with $\alpha,\beta\in\RR$. Then, if $\alpha<0$ or $\beta<0$, then $px+qy$ is negative on one side of $\Lambda$. 
\end{proof}

%Let us now write $l_\Lambda$ in the small canonical form 
%(see Definition \ref{def_minimalform}) $$l_\Lambda(x,y)=\min_{(i,j)\in\B}(ix+jy).$$

%\begin{lemma}\label{lemma_sdfsdfs} The support $\B$ of $l_\Lambda(x,y)$ in the small canonical form is equal to
%\end{lemma}

\begin{definition}
\laber{def_eblowup}
Suppose that $\Delta$ is a $\QQ$-polygon and $O=(0,0)$ is its vertex. Let $\e>0$. Let $\Lambda$ be as in \eqref{eq_lambda} such that $\Delta\subset\Lambda$ and $\Delta,\Lambda$ coincide in a neighborhood of $O$. We say that $\Delta'\to\Delta$ is the $\e$-blowup of $\Delta$ in a direction $(i,j)\in \A_\Lambda$ if $$\Delta'=\{(x,y)\in\Delta | ix+jy-\e\geq 0\}.$$
We say that this blow-up is made {\it with respect to} the lattice point $(i,j)$. Note that $\Delta'\subset \Delta$. We say that $\partial\Delta'\setminus\partial\Delta$ (i.e. the new side of $\Delta'$ obtained as cutting the corner at $O$) is the side, {\it dual} to the vector $(i,j)$.
\end{definition}

Note that we do not require that $(i,j)$ is a primitive vector. This will be important in Lemma~\ref{lemma_nicest}.

\begin{remark}
Note that if $\Lambda$ is unimodular (Definition~\ref{def_smoothcorner}) then $p_1q_2-p_2q_1= 1$ and there exists a preferred direction $(p_1+p_2,q_1+q_2)$ to perform a blow-up which produces two unimodular corners near the vertex of $\Lambda$.
\end{remark}

Let $f$ be any $\Lambda$-tropical polynomial written in the small canonical form (Definition~\ref{def_minimalform}). So, $\supp(f)\subset \A_{\Lambda}$ and is finite. Recall that $O=(0,0)$ is the corner of $\Lambda$. 
\begin{lemma}
\laber{lemma_makingnice} 
Consider any $\e>0$ small enough. There exist $\delta>0, N>0$ such that if $$(p,q)\in(\RR_{>0}(p_1,q_1) \oplus \RR_{>0}(p_2,q_2))\cap \ZZ^2,\sqrt{p^2+q^2}>N,$$ then $px+qy - \delta>f$ on $\Lambda\setminus B_\e(O)$. 
\end{lemma}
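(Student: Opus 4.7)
The plan is to work in the linear coordinates $u_1 = p_1 x + q_1 y$, $u_2 = p_2 x + q_2 y$, in which $\Lambda$ becomes the first quadrant. Under this change, a lattice point $(i,j) \in \A_\Lambda$ decomposes uniquely as $(i,j) = \alpha_{ij}(p_1,q_1) + \beta_{ij}(p_2,q_2)$ with $\alpha_{ij}, \beta_{ij} \geq 0$, and its monomial reads $\alpha_{ij} u_1 + \beta_{ij} u_2 + a_{ij}$. Writing $(p,q) = \alpha(p_1,q_1) + \beta(p_2,q_2)$, the integrality of $(p,q) \in \ZZ^2$ in the strict interior of the cone, combined with Cramer's rule, gives $\alpha, \beta \geq 1/|d|$ where $d := p_1 q_2 - p_2 q_1$.

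The key structural input will be a pair of global upper bounds: $f \leq k_1 u_1$ and $f \leq k_2 u_2$ on all of $\Lambda$, where $k_i > 0$ is the coefficient making $k_i u_i$ the monomial in $f$'s small canonical form corresponding to the quasi-degree $m_f(S_i)$ at the side $S_i$ of $\Lambda$ (Definition~\ref{def_qdegree}). The justification is direct: the monomial $m_f(S_i) n(S_i)$ lies in $\B_f$ by the definition of quasi-degree, and its constant term must vanish since $f|_{\partial \Lambda} = 0$, so it participates in the minimum defining $f$. The case $f \equiv 0$ (where $k_1 = k_2 = 0$) is immediate, so I will assume $k_1, k_2 > 0$.

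With these two inputs, the remainder is a short linear estimate. Setting $N := 4\max(k_1, k_2)\max(\|v_1\|, \|v_2\|)$ with $v_i = (p_i, q_i)$, the inequality $\sqrt{p^2+q^2} \leq 2\max(\alpha,\beta)\max(\|v_1\|, \|v_2\|)$ forces $\max(\alpha,\beta) > 2\max(k_1, k_2)$; by symmetry I may assume $\alpha > 2 k_1$. Choosing $c_\e > 0$ so that $u_1 + u_2 \geq c_\e$ on $\Lambda \setminus B_\e(O)$, I then compute
\begin{equation*}
\alpha u_1 + \beta u_2 - f \ \geq\ (\alpha - k_1) u_1 + \beta u_2 \ \geq\ k_1 u_1 + \beta u_2 \ \geq\ \min(k_1, 1/|d|)\, c_\e,
\end{equation*}
using $\beta \geq 1/|d|$. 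Any $\delta$ smaller than this uniform positive lower bound does the job.

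The main obstacle, in my view, is upgrading the quasi-degrees, defined in the paper via the \emph{local} behaviour of $f$ along each side $S_i$, to the \emph{global} inequality $f \leq k_i u_i$ on all of $\Lambda$. Once one recognises that the quasi-degree monomials must lie in $\B_f$ with zero constant term because of the boundary vanishing of $f$, the remaining linear-algebra step is routine, and the symmetric case $\beta > 2 k_2$ is identical.
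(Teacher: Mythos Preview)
Your proof is correct and in fact a bit cleaner than the paper's. The paper reduces to the standard corner $(p_1,q_1)=(1,0),\ (p_2,q_2)=(0,1)$, observes that by monotonicity it suffices to treat $(p,q)=(1,N)$ (since $px+qy\ge x+Ny$ on the first quadrant whenever $p\ge 1,\ q\ge N$), and then runs a sector-by-sector comparison of $x+Ny$ against each homogeneous monomial $p_i x+q_i y$ active near the corner. Your argument bypasses the sector analysis entirely by isolating just the two quasi-degree monomials $k_1u_1,\ k_2u_2$ as global upper bounds for $f$, and combines this with the lattice lower bound $\alpha,\beta\ge 1/|d|$ coming from Cramer's rule. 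This yields explicit constants $N$ and $\delta$ in one line and avoids the case split on which monomial dominates; the paper's approach, by contrast, is closer to the geometric picture of the face decomposition of $\Lambda$ but is more ad hoc. The ``obstacle'' you flag --- that the quasi-degree monomial has zero constant term --- is indeed the only substantive point, and your justification via $f|_{\partial\Lambda}=0$ is exactly right: on the side $\{u_i=0\}$ the active monomial must vanish identically, forcing both its $u_{3-i}$-coefficient and its constant to be zero.
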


\begin{proof}
We consider the case $(p_1,q_1)=(1,0), (p_2,q_2)=(0,1)$, the general case can be handled in the same way.  If $\e$ is small enough, then we have $$f|_{B_\e(O)\cap\Lambda}=\min_{(p_i,q_i)\in\A} (p_ix+q_iy)$$ where $\A\subset\A_\Lambda$. It is enough to prove the statement for $(p,q)=(1,N)$, i.e. that if $N$ is big enough and $\delta>0$ is small enough, then $$x+Ny-\delta>\min_{(p_i,q_i)\in\A} (p_ix+q_iy) \text{\ for\ } (x,y)\in \Lambda\setminus B_\e(O).$$  

The cone $\Lambda$ is dissected on regions where each of $p_ix+q_iy$ is the minimal monomial. All these sectors except one satisfy $y>cx$ for a constant $c$ depending on $p_i,q_i$.  Therefore if $N$ is big enough then $x+Ny>(p_i+1)x+(q_i+1)y>p_ix+q_iy+\delta$ if $x$ or $y$ is bigger than $\delta$. The only region where we do not have the estimate $y>cx$ is the region where the minimal monomial $p_ix+q_iy$ satisfies $p_i=0$. In this region, again, $x+Ny>q_iy+\delta$ if $x$ or $y$ is bigger than $\delta$ and $N$ is big enough. 

\end{proof}

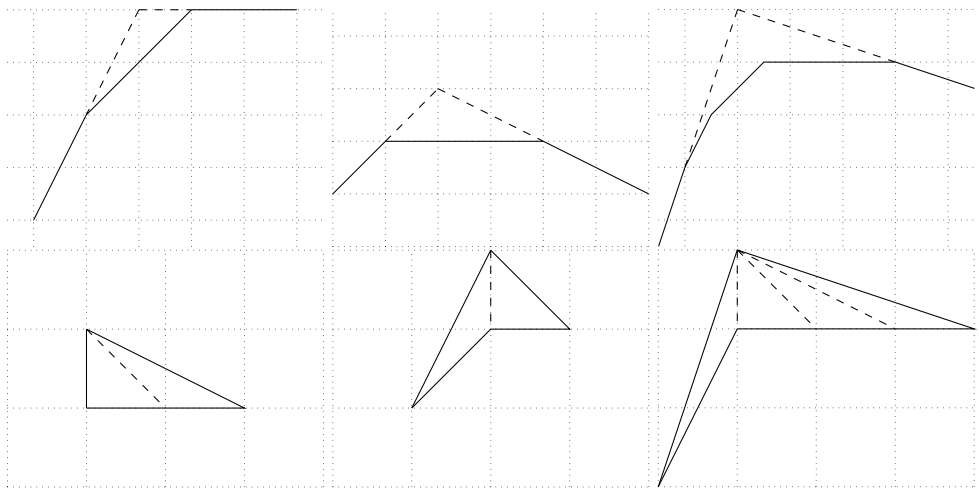
\begin{figure}
\begin{tikzpicture}[scale=0.35]
\draw[black,step = 2.0, very thin, dotted](-3,-9) grid (9,0);
\draw[dashed](2,0)--++(-2,-4);
\draw(0,-4)--++(-2,-4);
\draw[dashed](2,0)--++(2,0);
\draw(4,0)--++(4,0);
\draw(4,0)--++(-4,-4);
\end{tikzpicture}
\begin{tikzpicture}[scale=0.35]
\begin{scope}[xshift=400]
\draw[black,step = 2.0,very thin, dotted](-4,-10) grid (8,-1);
\draw(-4,-8)--++(2,2);
\draw[dashed](-2,-6)--++(2,2);
\draw[dashed](0,-4)--++(4,-2);
\draw(4,-6)--++(4,-2);
\draw(4,-6)--++(-6,0);
\end{scope}
\end{tikzpicture}
\begin{tikzpicture}[scale=0.35]
\begin{scope}[xshift=800]
\draw[black,step = 2.0,very thin, dotted](-3,-9) grid (9,0);
\draw[dashed](0,0)--++(-2,-6);
\draw[dashed](0,0)--++(6,-2);
\draw(-2,-6)--++(-1,-3);
\draw(6,-2)--++(3,-1);
\draw(-2,-6)--++(1,2)--++(2,2)--++(5,0);
\end{scope}
\end{tikzpicture}
\begin{tikzpicture}[scale=0.35]
\begin{scope}[yshift=-300]
\draw(0,-3)--++(0,-3);
\draw(0,-3)--++(6,-3)--++(-6,0);
\draw[black,step = 3.0,very thin, dotted](-3,-9) grid (9,0);
\draw[dashed](0,-3)--++(3,-3);
\end{scope}
\end{tikzpicture}
\begin{tikzpicture}[scale=0.35]
\begin{scope}[yshift=-300]
\draw(3,0)--++(-3,-6);
\draw(3,0)--++(3,-3)--++(-3,0)--++(-3,-3);
\draw[black,step = 3.0,very thin, dotted](-3,-9) grid (9,0);
\draw[dashed](3,0)--++(0,-3);
\end{scope}
\end{tikzpicture}
\begin{tikzpicture}[scale=0.35]
\begin{scope}[yshift=-300]
\draw(0,0)--++(-3,-9);
\draw(0,0)--++(9,-3)--++(-9,0)--++(-3,-6);
\draw[black,step = 3.0,very thin, dotted](-3,-9) grid (9,0);
\draw[dashed](0,0)--++(0,-3);
\draw[dashed](0,0)--++(3,-3);
\draw[dashed](0,0)--++(6,-3);
\end{scope}

\end{tikzpicture}

\caption{Above pictures show  non-unimodular corners $\Lambda$ (dashed lines). The corresponding below pictures present lattice points with respect to whom we should perform the blow-ups in Lemma~\ref{lemma_nicest},  in order to make all the corners unimodular: the result is shown by continuous lines above.  Dashed lines below show vectors dual to the new sides.}
\label{fig_blowup}
\end{figure}

\section{Nice tropical series}
\label{sec_nice}
\begin{definition}
\laber{def_nice}
Let $f$ be a $\Delta$-tropical series. We say that $f$ is {\it nice} if all the corners of $\Delta$ are unimodular (Definition~\ref{def_smoothcorner}) and the quasi-degree (Definition~\ref{def_qdegree}) $m_f$ is nice (Definition~\ref{def_nicedegree}). 
\end{definition}

\begin{lemma}
\laber{lemma_notnice}
Let $\Delta$ be a $\QQ$-polygon. Suppose that $f$ is a nice $\Delta$-tropical series. Then, $C(f)$ has exactly one edge of weight one passing through each corner of $\Delta$. 
\end{lemma}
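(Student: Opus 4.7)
The plan is to localize the problem to a single corner $V$ of $\Delta$ and translate the claim into a Newton-polygon calculation. Since $f$ is nice, $V$ is unimodular, so after an $SL(2,\ZZ)$ change of coordinates I may assume $V=(0,0)$, $\Delta$ coincides near $V$ with the first quadrant, and the two sides $S_1,S_2$ of $\Delta$ meeting at $V$ have inward primitive normals $(1,0)$ and $(0,1)$. Setting $a=m_f(S_1)$ and $b=m_f(S_2)$, niceness of the quasi-degree forces $\min(a,b)=1$, so I may take $b=1$. Since $f|_{\partial\Delta}=0$, the dominant monomials of $f$ near $S_1$ and $S_2$ are $ax$ and $y$, both with zero constant term.

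Next I would show that on a small enough neighborhood $U$ of $V$ the function $f$ equals $\min_{(i,j)\in A_V}(ix+jy)$, where $A_V\subset\ZZ^2_{\geq 0}\setminus\{(0,0)\}$ is the set of exponents of monomials of the small canonical form of $f$ whose constant coefficient is $0$. Indeed, non-negativity of each monomial of the canonical form on the local quadrant forces $i,j\geq 0$ and $a_{ij}\geq 0$; and since $f(V)=0$ and $f$ is continuous, any monomial with $a_{ij}>0$ is bounded below by the positive constant $a_{ij}$ on a neighborhood of $V$ and therefore cannot contribute there.

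The core step is the claim that $A_V$ contains no lattice point strictly below the Newton segment from $(a,0)$ to $(0,1)$. Any such $(i,j)\in\ZZ^2_{\geq 0}$ would satisfy $i+aj<a$, hence $j=0$ and $0\leq i<a$. The origin is excluded because $f>0$ on $\Delta^\circ$ near $V$; and any $(i,0)$ with $0<i<a$ is excluded because then the monomial $ix$ would dominate $ax$ near $S_1$, contradicting $m_f(S_1)=a$. A symmetric check (using that $(i,j)$ with $i\geq 1, j\geq 1$ satisfies $i+aj\geq 1+a>a$) shows that $(a,0)$ and $(0,1)$ are themselves vertices of $\mathrm{conv}(A_V)$. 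Thus $[(a,0),(0,1)]$ is a complete edge of the lower-left boundary of $\mathrm{conv}(A_V)$, containing no interior lattice points since $\gcd(a,1)=1$.

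By tropical duality this Newton segment corresponds to a single edge of $C(f)$ emanating from $V$ into $\Delta^\circ$, separating the face where $ax$ dominates (touching $S_1$) from the face where $y$ dominates (touching $S_2$). Its primitive direction is the $90^{\circ}$ rotation of $(a,0)-(0,1)=(a,-1)$, namely $(1,a)$, which indeed points into the first quadrant; its weight equals the lattice length of the dual Newton segment, which is $\gcd(a,1)=1$. The only mildly delicate point is the reduction of step two to zero-constant monomials; once that is in place, the conclusion is a direct Newton-polygon calculation enabled entirely by the unimodularity and niceness hypotheses.
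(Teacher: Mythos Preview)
Your proof is correct and follows the same strategy as the paper: localize to a corner, use unimodularity to put it at the origin with the standard quadrant, and use niceness to force one of the two quasi-degrees to be $1$. The paper's argument is much terser: it simply asserts that $f=\min(y,kx)$ near the corner and reads off that the resulting edge has weight one, without spelling out why no other monomials can contribute. Your Newton-polygon analysis is a more explicit version of that same step.

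One small wording issue: you write that ``non-negativity of each monomial of the canonical form on the local quadrant forces $i,j\geq 0$ and $a_{ij}\geq 0$.'' This is not quite right as stated---a monomial like $-x+c$ with $c>0$ is non-negative on a small neighborhood of the origin in the first quadrant. What is true (and what you actually use) is that any monomial contributing near $V$ must vanish at $V$, hence have $a_{ij}=0$, and \emph{then} non-negativity of $ix+jy$ on the open local quadrant forces $i,j\geq 0$. Once you reorder those two observations the argument is clean.
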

\begin{proof} Suppose the contrary.
Applying $SL(2,\ZZ)$ transformation and translation we may assume that the corner in consideration is at the point $(0,0)$ and two neighboring vertices of $\Delta$ are the points $(0,a)$ and $(b,0)$. Denote these neighboring sides by $S_1,S_2$. Suppose that $m_f(S_1)=1,m_f(S_2)=k$. Then $f$ is given by $f'(x,y)=\min (y,kx)$ in a neighborhood of $(0,0)$, and the tropical edge defined by $f'$ has weight one.
\end{proof}

\begin{lemma}
\laber{lemma_nicest}
Let $\Lambda$ be a corner, as in \eqref{eq_lambda}, and $f$ be any $\Lambda$-tropical polynomial. Let $\e>0$ be any small number. There exists a finite sequence of blowups (Definition~\ref{def_eblowup}) $$\Lambda^n\to\dots\to\Lambda^3\to\Lambda^2\to\Lambda^1\to\Lambda,$$  and a nice (Definition~\ref{def_nice}) $\Lambda^n$-tropical polynomial $\tilde f$ on $\Lambda^n$ such that $f=\tilde f$ on $\Lambda\setminus B_\e(O)$. 
\end{lemma}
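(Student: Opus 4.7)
My plan is to build a finite sequence of blowups resolving the non-unimodular corner of $\Lambda$ at $O$, and to define $\tilde f$ as the pointwise minimum of $f$ with one new monomial for each blowup. The controlling tool is Lemma~\ref{lemma_makingnice}, which guarantees that any monomial $px+qy-\delta$ with $(p,q)$ in the open interior of $\A_\Lambda$ and $\sqrt{p^2+q^2}$ above a threshold $N$ dominates $f$ on $\Lambda\setminus B_\e(O)$.

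I would begin by choosing a unimodular refinement of $\A_\Lambda$: a chain of primitive lattice vectors $v_0=(p_1,q_1), v_1,\ldots,v_s, v_{s+1}=(p_2,q_2)$ with each consecutive pair forming a $\ZZ$-basis of $\ZZ^2$, obtained from a Hirzebruch-Jung continued fraction expansion and further Stern-Brocot refinements as needed. For each $k=1,\ldots,s$ I would perform a blowup at the lattice point $N_k v_k$ with parameter $\e_k'>0$, where $N_k\in\ZZ_{\geq 1}$ and $\e_k'$ are chosen small enough that the cut lies inside $B_\e(O)$ and $|N_k v_k|>N$ (so Lemma~\ref{lemma_makingnice} applies to the monomial $N_k v_k\cdot(x,y)-\e_k'$). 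The primitive inward normal of the $k$-th new side is then $v_k$, its quasi-degree is $N_k$, and the resulting polygon $\Lambda^s$ has only unimodular corners near $O$.

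I would then set
\[
\tilde f(x,y)=\min\Bigl(f(x,y),\,\min_{k=1}^{s}\bigl(N_k v_k\cdot(x,y)-\e_k'\bigr)\Bigr).
\]
By construction each added monomial vanishes on its corresponding new side (so $\tilde f|_{\partial\Lambda^s}=0$) and dominates $f$ on $\Lambda\setminus B_\e(O)$ (so $\tilde f=f$ there, invoking also that $\Lambda^s\setminus B_\e(O)=\Lambda\setminus B_\e(O)$ since all cuts are inside $B_\e(O)$). The quasi-degree of $\tilde f$ on the truncated old side $S_j$ is inherited from $m_f(S_j)$, while on the $k$-th new side it equals $N_k$.

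The main obstacle is choosing the $N_k$'s and the subdivision so that the resulting quasi-degree pattern is nice, i.e.\ every side with quasi-degree $>1$ has both neighbors with quasi-degree $1$. This forces $N_k=1$ on new sides adjacent to any side with $m>1$, and in that case dominance demands $|v_k|>N$ for the \emph{primitive} $v_k$. For primitives adjacent to the boundary this is readily arranged---one may take $v_1=v_0+L v_{s+1}$ for arbitrarily large $L$, still a $\ZZ$-basis with $v_0$---but deeper in the chain certain ``small'' primitives (such as $(1,1)$ when $\A_\Lambda=\RR_{\geq0}^2$) are forced by the Stern-Brocot structure of unimodular subdivisions. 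At those forced small-primitive positions I would use scaled blowups with $N_k>1$ (whose dominance comes from the freedom to enlarge $N_k$), and I would isolate each such slot between $m=1$ insulating sides placed at the surrounding large primitives. Arranging the refined chain so that every $N_k=1$ slot lands on a primitive of magnitude exceeding $N$, while every small-primitive slot receives a scaled $N_k>1$ whose two neighbors satisfy $N=1$, is the technical core of the construction; once this combinatorial arrangement is in place, the verification of all three properties (unimodularity, niceness, and $\tilde f=f$ outside $B_\e(O)$) is routine from the estimates above.
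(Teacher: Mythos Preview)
Your strategy is sound and rests on the same lever as the paper's---Lemma~\ref{lemma_makingnice}---but the paper organizes the blowups differently and thereby sidesteps the ``technical core'' you leave open. Instead of fixing a Hirzebruch--Jung chain in advance and then refining it to achieve insulation, the paper simply enumerates \emph{all} primitive vectors $(i_k,j_k)\in\A_\Lambda\setminus\{(p_1,q_1),(p_2,q_2)\}$ in order of nondecreasing norm and performs, one after another, the $\delta$-blowup at $n_k(i_k,j_k)$ with $n_k$ the least positive integer making $\|n_k(i_k,j_k)\|\geq N$. Two things then come for free: since norms are increasing, $n_k=1$ for all $k$ past a fixed threshold; and since every primitive direction is eventually inserted, each side with quasi-degree $>1$ (the finitely many small-norm new sides together with the two old sides) is, for $n$ large enough, angularly flanked on both sides by new sides of quasi-degree $1$. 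Niceness thus falls out without any hand-built arrangement of the chain, which is exactly the combinatorics you found delicate. Unimodularity of $\Lambda^n$ is likewise automatic: if two angularly adjacent rays $u,v$ in the fan had $\det(u,v)>1$, the triangle $0uv$ would contain a lattice point in the open cone of norm strictly less than $\max(|u|,|v|)$, contradicting that all primitives up to that norm were already inserted.

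One small slip in your write-up: the vector $v_1=v_0+Lv_{s+1}$ is \emph{not} a $\ZZ$-basis partner of $v_0$, since $\det(v_0,\,v_0+Lv_{s+1})=L\det(v_0,v_{s+1})$; you want $v_1=Lv_0+v'$ with $\det(v_0,v')=1$. This is easily repaired, and your recursive insulation scheme does terminate, but carrying it out cleanly is precisely the bookkeeping the paper's norm-ordering trick avoids.
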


\begin{proof}
Consider any ordering $\{(i_k,j_k)\}_{k=1}^\infty$ of primitive vectors in $\A_{\Lambda}\backslash\{(p_1,q_1),(p_2,q_2)\}$ such that $i_{k+1}^2+j_{k+1}^2\geq i_k^2+j_k^2$ for any pair of consecutive (with respect to this order) primitive vectors. Choose $\delta>0$ small enough and denote by $\Lambda^k$ the $\delta$-blow-up of $\Lambda^{k-1}$ with respect to the vector $n_k(i_k,j_k)$ where $n_k\in\NN$ is chosen in such a way that $||n_k(i_k,j_k)||\geq N$ (see  Lemma~\ref{lemma_makingnice}).

Note that $\Lambda^{k-1}$ contains $k$ corners but only one of them can be blow-upped using the direction $(i_k,j_k)$; so there is no ambiguity.

We construct the following sequence $\{f_k:\Lambda^k\to\RR\}_{k=1}^\infty$ of functions.
The function $f_0$ is taken to be $f$ on $\Lambda^0=\Lambda$. We take $f_k$ to be $$f_k(x,y)=\min(f_{k-1}(x,y), n_k(i_kx+j_ky)-\delta) \text{\ on\ } \Lambda^k = \Lambda^{k-1}\cap \{n_k(i_kx+j_ky)-\delta\geq 0\}.$$ Because of the choice of $n_k$ we know that $f_k$ and $f_{k-1}$ are equal outside of a small neighborhood of $O$. The number $n_k$ represents the quasi-degree of $f_n, n>k$ on the side dual to the vector $n_k(i_k,j_k)$. By Lemma~\ref{lemma_makingnice} for large $k$ all this $n_k$ can be chosen to be $1$. Therefore from the construction it is clear that $f_n$ is nice on $\Lambda^n$ for some $n$ big enough.
\end{proof}

\begin{proposition}%[\citepage{proof_propnice}]
\label{prop_nice} Let $\Delta$ be a $\QQ$-polygon. Consider a sequence of operators $G_{{\bf q}_1},G_{{\bf q}_2},\dots, G_{{\bf q}_m}$ where ${\bf q}_1,{\bf q}_2,\dots, {\bf q}_m$ are  (not necessary distinct) points in $\Delta^\circ$. We will use the following notation 
\begin{equation}
\label{eq_compositionG}
G=G_{{\bf q}_m}G_{{\bf q}_{m-1}}\dots G_{{\bf q}_1}.
\end{equation}
 Then, for each $\e>0$ small enough there exists a unimodular $\QQ$-polygon $\Delta'\subset\Delta$ such that 
\begin{itemize}
\item $G 0_{\Delta'}$ is nice (Definition~\ref{def_nice}) on $\Delta'$,
\item $0\leq G 0_{\Delta}-G 0_{\Delta'}<\e$ on $\Delta'$. 
\item $G 0_{\Delta}\leq \e$ on $\Delta\setminus\Delta'$.
\end{itemize}
\end{proposition}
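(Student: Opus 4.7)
I would choose $\Delta'$ to be a slight modification of the superlevel set $\Delta_{\e'}:=\{g\geq\e'\}$, where $g:=G 0_\Delta$ and $\e'<\e$, with non-unimodular corners and sides of non-nice quasi-degree remedied via the blow-up construction of Lemma~\ref{lemma_nicest}.

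First, iterating Lemma~\ref{lem_singlegp}, each operator $G_{\q_i}$ modifies only one coefficient of the small canonical form (Definition~\ref{def_minimalform}), so $g$ has finite small canonical support. By iterated Lemma~\ref{lemma_tropicalseries} and Proposition~\ref{prop_upperbound}, $g$ is a $\Delta$-tropical polynomial vanishing on $\partial\Delta$ and bounded above by $m\cdot l_\Delta$. Monotonicity of $G_\p$ (Lemma~\ref{lem_gpmponotone}) applied inductively yields $g(\q_i)\geq l_\Delta(\q_i)>0$. So for any $\e'\in(0,\min(\e,\min_i l_\Delta(\q_i)))$, the set $\Delta_{\e'}$ is a $\QQ$-polygon contained in $\Delta^\circ$ with each $\q_i\in\Delta_{\e'}^\circ$.

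Next, at each corner of $\Delta_{\e'}$ that fails to be unimodular, and near each pair of adjacent sides of $\Delta_{\e'}$ whose quasi-degrees violate Definition~\ref{def_nicedegree}, I would apply Lemma~\ref{lemma_nicest} with a small blow-up parameter $\delta>0$, obtaining a unimodular $\QQ$-polygon $\Delta'\subset\Delta_{\e'}$. The $\Delta'$-tropical polynomial $\tilde g$, agreeing with $g-\e'$ outside small neighborhoods of the blown-up corners and given by the local polynomials produced in Lemma~\ref{lemma_nicest} there, is then nice on $\Delta'$ in the sense of Definition~\ref{def_nice}. For $\delta$ chosen small enough relative to $\e-\e'$, we still have $g<\e$ on all of $\Delta\setminus\Delta'$, so the third bullet holds.

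The heart of the argument, and the main obstacle, is identifying $G 0_{\Delta'}$ with $\tilde g$ up to a uniform error less than $\e-\e'$. The heuristic identity is $G 0_{\Delta'}=g-\e'$ on $\Delta'$ (exact when every side of $\Delta_{\e'}$ is an $\e'$-shift of a side of $\Delta$ with quasi-degree one; verifiable by direct computation in simple examples). To prove it in general I would compare the canonical coefficients of $G 0_{\Delta'}$ and $g-\e'$ step by step along the composition $G_{\q_m}\cdots G_{\q_1}$: the combinatorial type of the active face at $\q_{i+1}$ is the same in both series (because a uniform $\e'$-shift does not change which monomial is minimal), and the single-coefficient update of Lemma~\ref{lem_singlegp} yields matching increments. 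The blow-up corrections contribute only $O(\delta)$ errors at the new sides, which by the distance estimate of Lemma~\ref{lem_distance} do not grow under subsequent composition. The subtlety is that intermediate iterates $g_i$ and $g_i'$ need not satisfy $g_i'=g_i-\e'$ pointwise---in the case where some sides have quasi-degree greater than one, the identity holds only at the final step $i=m$---so the induction has to be formulated in terms of supports and active faces rather than simply pointwise equality.
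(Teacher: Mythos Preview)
Your overall strategy---use Lemma~\ref{lemma_nicest} to blow up corners and obtain a nice auxiliary function $\tilde g$, then compare $G0_{\Delta'}$ with it---is the right one, and is what the paper does. However, you take an unnecessary detour through the superlevel set $\Delta_{\e'}=\{g\geq\e'\}$, and this creates the very obstacle you flag as ``the heart of the argument''.

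The paper never passes to a level set. It applies Lemma~\ref{lemma_nicest} directly at the corners of $\Delta$ itself, obtaining $\Delta'\subset\Delta$ and a nice $\tilde f$ on $\Delta'$ with $\tilde f=f=G0_\Delta$ away from small neighborhoods of the corners. The key point you are missing is that one does \emph{not} need to identify $G0_{\Delta'}$ with $\tilde f$ (or with anything) up to a uniform error. One only needs the inequality $G0_{\Delta'}\leq\tilde f$, which is immediate: $\tilde f$ is a $\Delta'$-tropical series, non-smooth at every $\q_i$ (since $\tilde f=f$ near the $\q_i$), and $\tilde f\geq 0$; so each $G_{\q_k}$ applied to something $\leq\tilde f$ stays $\leq\tilde f$. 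From $G0_{\Delta'}\leq\tilde f$ the quasi-degree of $G0_{\Delta'}$ on each side of $\Delta'$ is bounded by that of $\tilde f$, and since $\tilde f$ is nice (in particular has quasi-degree $1$ on all the new blow-up sides), $G0_{\Delta'}$ is nice. This is a two-line argument replacing your entire step-by-step tracking of active faces.

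For the second bullet, the paper again avoids computation: since $\Delta'$ differs from $\Delta$ only in arbitrarily small neighborhoods of the corners, the canonical coefficients of $0_{\Delta'}$ and $0_\Delta$ differ by arbitrarily little, i.e.\ $\rho(0_\Delta,0_{\Delta'})$ is small. Then Lemma~\ref{lem_distance}, iterated $m$ times, gives $\rho(G0_\Delta,G0_{\Delta'})<\e$ directly. Your proposed inductive comparison of the compositions is not needed, and your concern that ``intermediate iterates need not satisfy $g_i'=g_i-\e'$ pointwise'' disappears because no such identity is ever used.
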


\begin{proof}
Consider $f=G0_\Delta$. Using Lemma~\ref{lemma_nicest}, we make necessary blow-ups at each corner of $\Delta$, constructing in this way a $\QQ$-polygon $\Delta'\subset \Delta$ and a nice function $\tilde f$ on $\Delta'$. By construction, $f=\tilde f$ near $P$. Therefore $G0_{\Delta'}\leq \tilde f$ and hence $G0_{\Delta'}$ is nice on $\Delta'$. Clearly $ G 0_{\Delta}\geq  G 0_{\Delta'}$ and we might do blow-ups in so small neighborhood of the corners of $\Delta$ such that $ G 0_{\Delta}<\e$ on $\partial\Delta'$ which implies the third assessment. The second assessment follows from Lemma~\ref{lem_distance}, because $\rho(0_{\Delta}, 0_{\Delta'})$ is arbitrary small for small $\e$, if these function are written in the canonical form. 
\end{proof}

\section{Coarse smooth approximation of the dynamic $G_P$}
\label{sec_coarse}

Let $\Delta$ be a $\QQ$-polygon and $g$ be a nice (Definition~\ref{def_nice}) $\Delta$-tropical series, such that $C(g)$ is a smooth tropical curve. Let $\q_i\in \Delta^\circ, i=1,\dots m$, and $f=G_{{\bf q}_m}G_{{\bf q}_{m-1}}\dots G_{{\bf q}_1}g$. Since each $G_{{\bf q}_k}$ is the application of $\Add_{i_k,j_k}^{e_{k}}$ for some $e_k>0$, we can write 

\begin{equation}
\label{eq_Gp}
G_{{\bf q}_m}G_{{\bf q}_{m-1}}\dots G_{{\bf q}_1}g = \Add_{i_mj_m}^{e_{m}}\Add_{i_{m-1}j_{m-1}}^{e_{m-1}}\dots \Add_{i_1,j_1}^{e_{1}}g.
\end{equation}

Suppose that the quasi-degrees $m_f,m_g$ coincinde (in particular, $f$ is also nice  on $\Delta$). For constants $M,h>0$ we
replace in  \eqref{eq_Gp} $$G_{{\bf q}_k}=\Add_{i_kj_k}^{e_k} \text{\ by\ } G_{{\bf q}_k}^\circ:=\Add_{i_kj_k}^{e_k-Mh} \text{\ for\ } k=1,\dots, m.$$ 

\begin{proposition}%[\citepage{proof_propsmooth}]
\label{prop_smooth}
Denote $f_0=g, f_{k+1}=G_{{\bf q}_k}^\circ(f_k)$. Then, for each $\e>0$, there exists a constant $M$ such that for any $h>0$ small enough
\begin{itemize} 
\item all the tropical curves defined by $f_k,k=1,\dots, m$ are smooth or nodal (Definition~\ref{def_nodal}) on $\Delta$ as well as each tropical curve in the family during the application of $G_{{\bf q}_k}^\circ$ to $f_k$ (Remark~\ref{rem_smooth});
\item the tropical curve defined by $f_m$ is $\e$-close to the tropical curve defined by $G_{{\bf q}_m}G_{{\bf q}_{m-1}}\dots G_{{\bf q}_1}g$.
\end{itemize}
\end{proposition}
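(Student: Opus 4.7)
The proposition has two essentially independent claims: $\e$-closeness of the endpoint curve to the unmodified one, and smoothness or nodality throughout the modified dynamic. For closeness, write $F_k := G_{\q_k}\cdots G_{\q_1}g$ for the unmodified iterate. Since each $G_{\q_j}^\circ$ replaces the increment $e_j$ of the coefficient $a_{i_j j_j}$ by $e_j - Mh$, an easy induction gives $\rho(f_m, F_m) \leq mMh$. Invoking Lemma~\ref{lemma_ecloseseries}, the tropical curves $C(f_m)$ and $C(F_m)$ are then $(2mMh)$-close, and taking $h < \e/(2mM)$ yields the required $\e$-closeness regardless of $M$.

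Smoothness proceeds by induction on $k$. The base case $f_0 = g$ is smooth by hypothesis, so every face of $\Delta \setminus C(f_0)$ has unimodular corners. For the inductive step, assume $C(f_k)$ is smooth or nodal. For $h$ sufficiently small, the bound $\rho(f_k, F_k) \leq kMh$ together with stability of face combinatorics under small coefficient perturbations guarantees that $\q_{k+1}$ lies in the interior of the face $\Phi^\circ$ of $C(f_k)$ with dominant monomial $(i_{k+1}, j_{k+1})$, matching the original dynamic. Let $c_{k+1}^\circ > 0$ be the natural $G_{\q_{k+1}}(f_k)$-amount from Lemma~\ref{lem_singlegp}; the application $\Add_{i_{k+1} j_{k+1}}^{e_{k+1} - Mh}$ is, via reparameterization, the family of Lemma~\ref{lemma_smooth} restricted to times $t \in [0, (e_{k+1} - Mh)/c_{k+1}^\circ]$. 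Provided this subinterval lies in $[0, 1)$, Lemma~\ref{lemma_smooth} delivers smoothness or nodality throughout the family and at its endpoint $f_{k+1}$, closing the induction step and also covering the ``family during the application'' clause via Remark~\ref{rem_smooth}.

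The main obstacle is then verifying the strict inequality $e_{k+1} - Mh < c_{k+1}^\circ$ at every step. By the explicit formula in Lemma~\ref{lem_singlegp}, $e_{k+1} - c_{k+1}^\circ$ equals the difference of second-minimum monomial values at $\q_{k+1}$ computed from $F_k$ versus $f_k$; since $a_{ij}(f_k) \leq a_{ij}(F_k)$ pointwise, the difference is nonnegative and bounded by $r\cdot Mh$, where $r$ is the multiplicity of the attaining monomial in the history $(i_1, j_1), \ldots, (i_k, j_k)$. The hypotheses that $g$ is nice and that the quasi-degrees of $f$ and $g$ agree restrict which monomials can dominate near $\q_{k+1}$ and how frequently they can recur as second minima, permitting a choice of $M$ that dominates $r$ uniformly in $k$. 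Balancing this lower bound on $M$ against the upper constraint $mMh < \e/2$ needed for closeness yields a workable range of $(M, h)$ and completes the proof. The bootstrapping aspect of this step, in which the required inequality involves $M$ on both sides via $\rho(f_k, F_k) = O(kMh)$, is the technically delicate part of the argument.
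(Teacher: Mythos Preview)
Your closeness argument and the inductive skeleton via Lemma~\ref{lemma_smooth} match the paper's approach. The gap is in how you close the key inequality $e_{k+1}-Mh<c_{k+1}^\circ$. You bound $e_{k+1}-c_{k+1}^\circ\leq r\cdot Mh$ and then say one can ``choose $M$ that dominates $r$ uniformly in $k$''. But the inequality you need is $r\cdot Mh<Mh$, i.e.\ $r<1$, and no choice of $M$ helps with that; increasing $M$ scales both sides equally. This is precisely the bootstrapping circularity you flag at the end, and you have not actually broken it. The appeal to niceness and $m_f=m_g$ as controlling ``how frequently monomials can recur as second minima'' is also not how those hypotheses enter: they do not bound recurrence multiplicities at interior points.

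The paper avoids the circularity by introducing an \emph{external} constant that does not depend on $M$ or $h$: let $\delta>0$ be the minimum over all $i,j$ of the nonzero distances from $\q_j$ to the intermediate curves $C(G_{\q_i}\cdots G_{\q_1}g)$ in the \emph{original} dynamic. This $\delta$ is fixed once $g$ and the sequence $\q_1,\ldots,\q_m$ are given. Requiring $mMh<\delta$ (and $mMh<\e$) then guarantees that at every step the point $\q_{k+1}$ sits in the correct face of $C(f_k)$, and that the reduced increment $e_{k+1}-Mh$ stops strictly before the face reaches $\q_{k+1}$; in particular the face retains nonempty interior and Lemma~\ref{lemma_smooth} applies on the whole subinterval. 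Any $M>0$ works once $h$ is small enough, so there is nothing to optimize. The hypotheses $m_f=m_g$ and niceness are used separately: the first ensures no $G_{\q_k}$ touches a face adjacent to $\partial\Delta$, and the second, via Lemma~\ref{lemma_notnice}, rules out higher-weight edges at the corners of~$\Delta$.
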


\begin{proof}%[Proof of Proposition~\ref{prop_smooth}]
\label{proof_propsmooth}
Since $m_f=m_g$, we do not apply operators $G_{\q_i}$ in the regions adjacent to the boundary of $\Delta$. The only two possibilities \m{classification of cases} how the tropical curve can become non-smooth during our procedure in \eqref{eq_Gp} is appearance of a non-smooth vertex inside $\Delta^\circ$ and appearance of an edge with weight bigger than one inside $\Delta^\circ$ or at the corners of $\Delta$. 

To satisfy $\e$-closeness, it is enough that $mMh<\e$. It follows from Lemma~\ref{lemma_smooth} that a non-smooth vertex or an edge with weight bigger than one in $\Delta^\circ$ can appear only by contracting a face.  We can decrease the constants $e_i$ in \eqref{eq_Gp} by any small positive numbers, such that no $G_{{\bf q}_k}^\circ$ contracts a face, this eliminates a part of \m{nodedolali Gp for that no singular curves appear} the problems with smoothness inside $\Delta^\circ$. To be sure that this decreasing did not change the incidence between faces and points $\q_i$ in the process it is enough to choose $M$ such that $mMh$ (the total change of function) would be less than the minimal non-zero distance between one of the points $q_1,\dots, q_m$ and the tropical curves $G_{{\bf q}_i}\dots G_{{\bf q}_1}g, i=1,\dots,m$. 
Finally, $f_m$ is nice on $\Delta$ and, by Lemma~\ref{lemma_notnice}, the tropical curve $C(f_m)$ has no edges of weight bigger than one at the corners of $\Delta$.
\end{proof}

\section{Tropical symplectic area}
\label{sec_area}
One may ask what are intrinsic properties of $f_{\Omega,P}$. We will prove that the curve $C(f_{\Omega,P})$ solves a sort of Steiner problem, see Corollary~\ref{cor_sym}.
\begin{definition}[See \cite{tony}]\label{def_SArea}
The {\it tropical symplectic area} of an interval $L\subset\RR^2$ with a rational slope is $Area(L)=||L||\cdot ||v||,$ where $||-||$ denotes a Euclidean length and  $v$ is a primitive integer vector parallel to $l$. If $C$ is an $\Omega$-tropical curve, then its tropical symplectic area is the weighted sum of areas for its edges $e$, i.e. $$Area(C)=\sum_{e}Area(e)\cdot m_e,$$
where $m_e$ is the weight of the edge $e$ (Definition~\ref{def_smoothvertex}). This area may be infinite as well, if $C$ contains infinite number of edges and the series diverges or $C$ has edges of infinite length.
\end{definition}

The motivation for this definition is as follows. Recall that an amoeba of an algebraic curve $S$ in the algebraic torus
$(\mathbb{C}^*)^2$ is an image of $S$ in $\mathbb{R}^2$ under the
logarithm map $\log_t(z_1,z_2)=(\log_t |z_1|,\log_t
|z_2|)$. Consider a family $\{S_t\}$ of algebraic curves  in
$(\mathbb{C}^*)^2$ for $t>0$. We say that $\{S_t\}$
tropicalizes to  the tropical curve $C$ if the family $\log_t S_t\subset\mathbb{R}^2$ converges to $C$ when $t\to\infty$. It could seem that the tropicalization of $\{S_t\}$ is defined
only as a set. In fact, the multiplicities  for the edges of $C$ can
be also canonically restored from the family $S_t$. % (see [] for the details).

Consider the following symplectic form on $(\mathbb{C}^*)^2$:
 $$
\omega= -id\log(z_1)\wedge d\log(\bar z_1)-id\log(z_2)\wedge d\log(\bar z_2).$$
\begin{proposition}\label{prop_LimforSArea}
Let $C$ be the tropicalization for $\{S_t\}$ and $B$ be a convex bounded open subset of $\mathbb{R}^2$. Then 
$$\lim_{t\to\infty}\int\limits_{\log_t^{-1}(B)\cap S_t} \frac{1}{\log t}\omega = 4\pi^2 Area(C\cap B).$$ \end{proposition}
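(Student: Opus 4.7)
The strategy is to localize the integral onto tubular neighborhoods of edges and to reduce each edge contribution to an explicit model computation on a cylinder over the edge. Fix a small parameter $\eta>0$. Since $\{S_t\}$ tropicalizes to $C$, for all $t$ sufficiently large the amoeba $\log_t S_t\cap\overline{B}$ is contained in the $\eta$-neighborhood of $C$. Cover $C\cap B$ by (i) disks of radius $\eta$ around each vertex of $C\cap B$, and (ii) tubular neighborhoods of the complementary open pieces of the edges. It is then enough to analyze the integral over the preimage of each piece under $\log_t$ and let $\eta\to 0$ after $t\to\infty$.

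For the vertex disks I would argue that the contribution is $O(\eta^2)$ uniformly in $t$: over an amoeba ball of radius $\eta$, the symplectic area is bounded by $\eta^2$ times a constant multiple of the number of sheets of $\log_t\colon S_t\to\RR^2$ over a generic point, and that number of sheets is bounded in terms of the weights of edges of $C$ incident to the vertex, hence independently of $t$. Letting $\eta\to0$ after the limit therefore kills this contribution.

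For an edge $e$ of primitive direction $v$ and tropical weight $m_e$, one applies an $SL(2,\ZZ)$-change of coordinates, which preserves $\omega$, to reduce to the model case $v=(1,0)$. Over a tubular neighborhood of $e$, the curve $S_t$ is approximated to leading order by $m_e$ pairwise disjoint ``horizontal'' sheets, corresponding to the $m_e$ solutions $z_2$ of the two-monomial balancing that defines $e$ (the other monomials of the defining polynomial being subdominant in the interior of the edge). Each sheet is parameterized by $(u_1,\theta_1)\in e\times[0,2\pi)$ with $(u_2,\theta_2)$ essentially constant. Writing $\omega=-2\log t(du_1\wedge d\theta_1+du_2\wedge d\theta_2)$ (via $d\log z_j=\log t\,du_j+i\,d\theta_j$), the direct computation of $\int(\omega/\log t)$ on a single sheet yields a multiple of the Euclidean length $T$ of $e$. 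Summing the $m_e$ sheets, undoing the $SL(2,\ZZ)$-reduction, and comparing with $Area(e)=\|e\|\cdot\|v\|\cdot m_e$, one obtains the edge contribution $4\pi^2\cdot m_e\cdot Area(e)$; summing over edges of $C\cap B$ yields $4\pi^2\,Area(C\cap B)$.

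The main obstacle is the identification of the $m_e$ parallel sheets along the interior of each edge together with uniform (in $t$) control of the error. Concretely, one needs a quantitative version of the statement that, away from a fixed small neighborhood of the vertices, the defining equation of $S_t$ is dominated by exactly the two monomials dual to $e$, and that the resulting $m_e$ implicit solutions converge, after rescaling by $\log t$, to parallel flat cylinders with integrable error. Once such uniform estimates are in place, the limit and the integral interchange on each piece, and the vertex estimate of the previous step absorbs the remaining mass, completing the argument.
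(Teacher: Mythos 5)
Your overall strategy --- localize onto tubular neighborhoods of the edges plus small vertex pieces, compute on flat cylinder models, and absorb the vertex contributions --- is the same as the paper's, which approximates $S_t$ by a complex tropical curve (a union of translated holomorphic cylinders $\{(z^p,z^q)\}$, citing Mikhalkin; this citation is precisely what covers the uniform sheet-approximation you flag as your ``main obstacle''). However, your edge reduction contains a genuine error: an $SL(2,\ZZ)$ monomial change of coordinates does \emph{not} preserve $\omega$. If $w_1=z_1^az_2^b$, $w_2=z_1^cz_2^d$, then $-i\,d\log w_1\wedge d\log\bar w_1-i\,d\log w_2\wedge d\log\bar w_2$ pulls back to $-i(a^2+c^2)\,d\log z_1\wedge d\log\bar z_1-i(b^2+d^2)\,d\log z_2\wedge d\log\bar z_2$ plus cross terms, which equals $\omega$ only for orthogonal integer matrices (signed permutations). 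This is consistent with the fact that the tropical symplectic area $\|e\|\cdot\|v\|$ is itself not an $SL(2,\ZZ)$-invariant, and with the paper's remark that the different torus-invariant $(1,1)$-forms correspond to dilatations of the tropical picture. Consequently, after your reduction to $v=(1,0)$ the model computation can only output a multiple of the Euclidean length of the \emph{transformed} edge; the factor $\|v\|=\sqrt{p^2+q^2}$, which distinguishes edges of different slopes, has been normalized away, and your final ``comparison with $Area(e)$'' (where, incidentally, you also write $Area(e)=\|e\|\,\|v\|\,m_e$ and then multiply by $m_e$ again) is an assertion rather than a computation. Concretely, for the edge $[0,(1,1)]$ of weight one, mapped to $[0,(1,0)]$ by $\left(\begin{smallmatrix}1&0\\-1&1\end{smallmatrix}\right)$, your recipe returns half of the correct value.

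The fix is exactly the paper's computation: stay in the original coordinates and parameterize a weight-one sheet over an edge of primitive direction $(p,q)$ by $z_1\approx\alpha w^p$, $z_2\approx\beta w^q$. With $\mu=\log_t|w|$, $\psi=\arg w$, your own normalization gives $\tfrac{1}{\log t}\,\omega|_{\mathrm{sheet}}=-2(p^2+q^2)\,d\mu\wedge d\psi$, and $\mu$ runs over an interval of length $\|e\|/\|v\|$, so each sheet contributes, up to the universal constant and orientation, $\|e\|\cdot\|v\|$; summing the $m_e$ sheets produces the weighted term in $Area(C\cap B)$. This is where the factor $\|v\|^2=p^2+q^2$ enters, and it cannot be recovered once you pretend $\omega$ is unchanged under the reduction. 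Your vertex estimate is fine in spirit (the bound is $O(\eta)$ times the number of sheets rather than $O(\eta^2)$, but all that is needed is a bound that is $o(1)$ as $\eta\to 0$ uniformly in $t$), and with the edge computation done in the original coordinates your scheme becomes essentially the paper's proof.
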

 
This justifies the name `` tropical symplectic area'': it is the main part in the asymptotic for symplectic areas.
 
 \begin{proof}
For a large $t$, $\log_t(S_t)$ is in a small neigborhood of the tropical curve $C$. Moreover, $S_t$ itself will be close to a certain lift of $C$ to the torus $(\mathbb{C}^*)^2.$ It is performed by lifting each edge with a slope $(p,q)$ to a piece of holomorphic cylinder $\{(z^p,z^q)|z\in\mathbb{C}\}$ translated by the action of the torus. This lift is called a complex tropical curve (see \cite{mikh} for the details). 

Therefore, we can compute the area of $S_t$ near the limit by looking at the areas of the cylinders. There also can be minor corrections coming from the vertices of $C$ but the corrections are small with respect to $\log t$ and so do not appear in the final statement.

To complete the proof we need to compute the contribution from each edge in $C\cap B$. It is clear that for each segment in $C\cap B$ the area of its lift is proportional to the length of the segment. So if we show that the area of the lift for the interval going from the origin to the integer vector $(p,q)$ is equal to $4\pi^2(p^2+q^2)\log t$ then we will be done. This computation is given by application of the following lemma for both parts of $\omega$.
 \end{proof}
 
 \begin{lemma}
 Let $v=(p,q)$ be a primitive integer vector. Let $C_t^{pq}$ be a lift of an interval $[0,v]$ to the torus $(\mathbb{C}^*)^2$ under $\log_t,$ i.e. $C_t^{pq}=\{(z^p,z^q)| 1\leq|z|\leq t\}.$ Then 
 $$\int_{C_t^{pq}} d\log(z_1)\wedge d\log(\bar z_1)=-4i\pi^2p^2\log t.$$
 \end{lemma}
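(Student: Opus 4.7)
The plan is to reduce the integral to an explicit double integral by parametrizing the complex tropical cylinder $C_t^{pq}$ and computing the pullback of the form.

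First I would introduce polar coordinates $z=re^{i\theta}$ on the annulus $\{1\le|z|\le t\}\subset\mathbb{C}^*$, so that $(r,\theta)\in[1,t]\times[0,2\pi)$ gives a smooth parametrization of $C_t^{pq}$ via $(z_1,z_2)=(r^p e^{ip\theta}, r^q e^{iq\theta})$. Under this parametrization I would compute the pullbacks
$$\log z_1 = p\log r + ip\theta, \qquad \log\bar z_1 = p\log r - ip\theta,$$
hence $d\log z_1 = p\,\frac{dr}{r}+ip\,d\theta$ and $d\log\bar z_1 = p\,\frac{dr}{r}-ip\,d\theta$.

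Next I would apply the identity $(a+ib)\wedge(a-ib)=-2i\,a\wedge b$ to get the clean formula
$$d\log z_1\wedge d\log\bar z_1 = -2ip^2\,\frac{dr}{r}\wedge d\theta.$$
Integrating over the product domain gives
$$\int_{C_t^{pq}} d\log z_1\wedge d\log\bar z_1 = -2ip^2\int_0^{2\pi}d\theta\int_1^t\frac{dr}{r} = -2ip^2\cdot 2\pi\cdot\log t = -4i\pi^2 p^2\log t,$$
which is exactly what was to be shown.

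The computation is essentially routine; the only point that requires care is fixing a consistent orientation on $C_t^{pq}$ (namely the one induced by the parametrization $(r,\theta)$ with $r$ increasing and $\theta$ running counterclockwise), since the sign of the answer depends on this choice. No part of the argument uses primitivity of $(p,q)$ beyond ensuring that the map $z\mapsto(z^p,z^q)$ is injective on the annulus and hence is a genuine embedding of the cylinder, so that no multiplicity factors appear.
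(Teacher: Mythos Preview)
Your approach is exactly the paper's: parametrize the annulus by $z=re^{i\theta}$, pull back $d\log z_1\wedge d\log\bar z_1$, and integrate. The computation of the pullback as $-2ip^2\,\frac{dr}{r}\wedge d\theta$ is correct and matches the paper's (typo-ridden) formula.

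One arithmetic slip to flag: in your last display you write $-2ip^2\cdot 2\pi\cdot\log t = -4i\pi^2 p^2\log t$, but $2\cdot 2\pi=4\pi$, not $4\pi^2$. Your own integral therefore evaluates to $-4i\pi p^2\log t$, which differs from the stated formula by a factor of $\pi$. The paper's proof has the same issue (its final line contains the same discrepancy), so this appears to be a typo in the lemma's statement rather than a flaw in your method; just be aware that the clean computation you wrote down does not actually produce the extra $\pi$.
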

 
 \begin{proof} Let $z_1$ be $r\exp(i\phi),$ where $r>0$ and $\phi\in[0,2\pi].$  Then $$d\log z_1=d\log\, r+i\phi d\phi\text{\ and\ }$$ $$d\log z_1\wedge d\log \bar{ z_1}= -id\log r\wedge d\phi^2$$ Then the left hand side of the equality we are proving is equal to 
 $$-i\int_1^t\int_0^{2\pi}d\log r\wedge d\phi^2=-4i\pi^2p^2\log t.$$\end{proof} 

\begin{remark}
The specific choice for $\omega$ is not crucial while it is invariant under the action of $(C^*)^2$. Indeed, if $\omega'$ is an arbitrary 2-form then its restriction to any holomorphic curve will not have contributions from pure holomorphic and anti-holomorphic parts of $\omega'.$ So we can think that $\omega'$ is a $(1,1)$-form. There is a two dimensional family of torus-invariant $(1,1)$-forms. Different choices for $\omega$ from this family correspond to coordinate dilatations on the level of tropical curves. 
\end{remark}

Proposition \ref{prop_LimforSArea} suggests us that symplectic area for tropical curves should be deformation invariant. Indeed, this should follow from the fact that the $2$-form $\omega$ is closed. 
And indeed, we can prove the deformation invariance directly.

 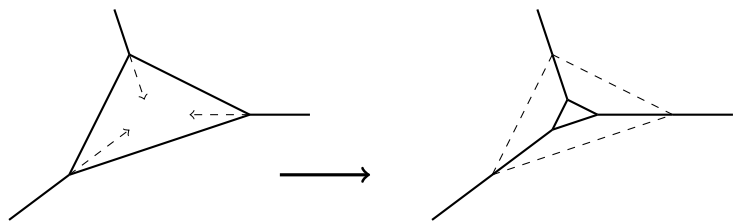
\begin{figure}[h]
    \centering

\begin{tikzpicture}[scale=0.4]

\draw[thick](0,0)--++(-2,-1.5);
\draw[thick](0,0)--++(2,4);
\draw[thick](0,0)--++(6,2);
\draw[thick](6,2)--++(-4,2);
\draw[thick](6,2)--++(2,0);
\draw[thick](2,4)--++(-0.5,1.5);
\draw[->][dashed](0,0)--++(2,1.5);
\draw[->][dashed](6,2)--++(-2,0);
\draw[->][dashed](2,4)--++(0.5,-1.5);

\draw[->][very thick](7,0)--(10,0);

\begin{scope}[xshift=400]
\draw[thick](-2,-1.5)--++(4,3);
\draw[thick](2,1.5)--++(0.5,1);
\draw[thick](2,1.5)--++(1.5,0.5);
\draw[thick](3.5,2)--++(-1,0.5);
\draw[thick](6,2)--++(2,0);
\draw[thick](6,2)--++(-2.5,0);
\draw[thick](2,4)--++(0.5,-1.5);
\draw[thick](2,4)--++(-0.5,1.5);
\draw[dashed](0,0)--++(2,4);
\draw[dashed](0,0)--++(6,2);
\draw[dashed](6,2)--++(-4,2);
%\draw[thick](6,2)--++(-4,2);
%\draw[thick](6,2)--++(2,0);
%\draw[thick](2,4)--++(-0.5,1.5);
\end{scope}

\end{tikzpicture}

    \caption{In the picture we shrink a triangular cycle. Any deformation of a tropical curve can be decomposed into such operations or their inversions. } 
    \label{fig_LocalDeform}
\end{figure}

\begin{lemma}
\label{lemma_deform}
Consider a continuous  family $C_s$ tropical curves such that for a compact set $B\subset\RR^2$ we have that $C_s\setminus B$ does not depend on $s$. Then $Area(C_s)$ is constant.
\end{lemma}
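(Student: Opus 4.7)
The plan is to reduce the invariance of $Area(C_s)$ to the balancing condition at each vertex of the tropical curves $C_s$. First I would observe that, since $C_s\setminus B$ is independent of $s$ and $B$ is compact, the contribution to $Area(C_s)$ coming from outside $B$ is a constant function of $s$. Hence it suffices to prove that $Area(C_s\cap B)$ is constant. Because $C_s\cap B$ has uniformly bounded combinatorial complexity and evolves continuously in $s$, I would partition $[0,1]$ into finitely many open subintervals on which the combinatorial type of $C_s\cap B$ is fixed, separated by a finite discrete set of ``transition'' parameters at which some edge collapses or a non-generic configuration appears, as in Figure~\ref{fig_LocalDeform}.

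On each interval of constant combinatorial type, each vertex position $V(s)$ varies continuously (in fact piecewise linearly), and the primitive direction $v_e$ and weight $m_e$ of every edge stay constant. Orienting each edge $e$ from $p_e^-(s)$ to $p_e^+(s)$, so that $p_e^+(s)-p_e^-(s)=\lambda_e(s)v_e$ with $\lambda_e>0$, the contribution of $e$ to the area is
\[
m_e\|e\|\,\|v_e\|=m_e\lambda_e\|v_e\|^2=m_e\bigl(p_e^+(s)-p_e^-(s)\bigr)\cdot v_e,
\]
so that
\[
Area(C_s\cap B)=\sum_{e}m_e\bigl(p_e^+(s)-p_e^-(s)\bigr)\cdot v_e.
\]
Differentiating in $s$ and regrouping by vertex, the contribution at each vertex $V$ equals $-\dot V(s)\cdot\sum_{e\ni V}m_e\tilde v_e$, where $\tilde v_e$ is the primitive vector at $V$ pointing outward along $e$. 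Each of these inner sums vanishes by the balancing condition (cf.\ Figure~\ref{fig_balancing}), so $Area(C_s\cap B)$ is constant on each subinterval.

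Finally, I would glue the subintervals together using continuity: the area is a continuous function of the (continuously varying) vertex positions, hence of $s$, so the values agree across the finitely many transition parameters and $Area(C_s)$ is constant on $[0,1]$. The main point to justify is that such transitions behave well, which is exactly the content of Figure~\ref{fig_LocalDeform}: at a transition, a cycle of edges inside $B$ shrinks to a point (or to a lower-dimensional configuration), and the resulting higher-valence vertex again satisfies the balancing condition; in particular the collapsed edges contribute nothing to the area at the transition and the one-sided values match. I expect this verification of continuity across degenerate moments, together with the discreteness of the set of transitions, to be the main technical obstacle, but both reduce to routine bookkeeping once we know that $C_s\cap B$ has bounded complexity.
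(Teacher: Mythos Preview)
Your argument is correct and in fact slightly cleaner than the paper's. Both proofs first localize to $B$ and then check invariance for a ``stratum'' of fixed combinatorics, but the mechanisms differ. The paper asserts that any deformation decomposes into elementary triangle-shrinkings as in Figure~\ref{fig_LocalDeform}, and then verifies by hand, via the projections $w_i$ in Figure~\ref{ChangeOfArea}, that replacing the segments $[0,v_1]$, $[0,v_2]$ by $[0,v_1+v_2]$ preserves area: the loss $|v_1||w_1|+|v_2||w_2|$ equals the gain $|v_1+v_2|^2$. Your approach bypasses this decomposition entirely: writing $Area(C_s\cap B)=\sum_e m_e(p_e^+(s)-p_e^-(s))\cdot v_e$ and regrouping by vertex exposes the balancing condition directly as the reason the area does not see vertex motion. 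One small improvement: you do not actually need to differentiate (and hence do not need the $V(s)$ to be differentiable). After regrouping, the expression is visibly a linear function of the vertex positions whose coefficient at each $V$ is $\sum_{e\ni V} m_e\tilde v_e=0$; so the area equals the fixed boundary contribution identically on each subinterval. This also removes any worry about the regularity of the family and leaves only the (mild) bookkeeping you already flagged: that the combinatorial type changes at a discrete set and that the area is continuous across those moments.
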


\begin{proof}

Any deformation $C_s$ locally can be decomposed into the elementary ones. Near each vertex of $C$, an elementary deformation is a process of moving and shortening two edges while growing the one in the opposite direction (see Figure \ref{fig_LocalDeform}).
  
Globally this corresponds to enlarging a coefficient for a tropical polynomial. For example on Figure \ref{fig_ShrinkPhi} we change the coefficient for the central region.

Up to a scaling, an elementary deformation simply replaces the union of segments $[0,v_1]$ an $[0, v_2]$ 
by a single segment $[0,v_1+v_2]$. Here $v_1$ and $v_2$ are the primitive (or appropriate multiples of primitive) vectors for the edges we are moving. 
Denote by $w_i$ the projection of $v_1+v_2$ on the line spanned by $v_i$ (see Figure~\ref{ChangeOfArea}). Then after the deformation the two edges together loose $$|v_1||w_1|+|v_2||w_2|=|v_1|(v_1+v_2)\cdot {{v_1}\over {|v_1|}}+|v_2|(v_1+v_2)\cdot {{v_2}\over {|v_2|}}=|v_1+v_2|^2$$ of their tropical symplectic area. On the other hand, the growing edge contributes exactly $|v_1+v_2|^2$ to the symplectic area of the deformed curve.
\end{proof}

 \begin{figure}[h]
    \centering

\begin{tikzpicture}[scale=0.4]

\draw[thick](0,0)--++(2,4);
\draw[thick](0,0)--++(6,2);
\draw[thick](0,0)--++(8,6);
\draw(8,6)node[right]{$v_1+v_2$};
\draw(2,4)node[left]{$v_2$};
\draw(6.5,1.5)node{$v_1$};
\draw(0,0)node[left]{$0$};
\draw[dashed](6,2)--++(6,2);
\draw[dashed](2,4)--++(3,6);
\draw[dashed](8,6)--++(-4,2);
\draw[dashed](8,6)--++(1,-3);
\draw[dashed](8,6)--++(-2,-4);
\draw[dashed](8,6)--++(-6,-2);
\draw(9.5,2.5)node{$w_1$};
\draw(4,8)node[left]{$w_2$};
\end{tikzpicture}

    \caption{Computing contributions for symplectic area.} 
    \label{ChangeOfArea}
\end{figure}
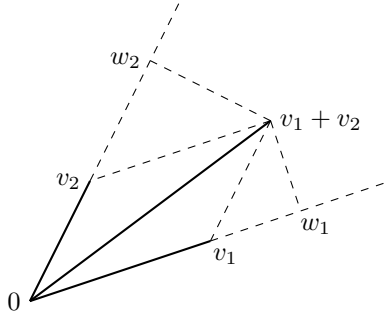

Let us get back to our specific case. Let $\Delta$ be a compact $\QQ$-polygon and $f$ be a $\Delta$-tropical polynomial with quasidegree $m_f$ (Definition~\ref{def_qdegree}). Then we can deform $C(f)$ to the union of all edges $e$ of the polygon taken with the multiplicities $m_f(e).$ This observation together with the deformation-invariance (Lemma~\ref{lemma_deform}) proves the following lemma. 

\begin{lemma}\label{lemma_quasidegdefinesarea}
Under the above assumptions $Area(C(f))=\sum\limits_{e\in S(\Delta)} m_f(e) Area(e).$
\end{lemma}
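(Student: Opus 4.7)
The plan is to combine deformation invariance (Lemma~\ref{lemma_deform}) with a direct balancing calculation. First I would reduce to the case of a polynomial using only the boundary monomials. Writing $f$ in its small canonical form (Definition~\ref{def_minimalform}) and splitting $\B_f=\B_f^{\mathrm{bdry}}\sqcup\B_f^{\mathrm{int}}$ with $\B_f^{\mathrm{bdry}}=\{m_f(S)n(S):S\in S(\Delta)\}$, consider the family
$$f_t(x,y)=\min\Bigl(\min_{(i,j)\in\B_f^{\mathrm{bdry}}}(ix+jy+a_{ij}),\ \min_{(i,j)\in\B_f^{\mathrm{int}}}(ix+jy+a_{ij}+t)\Bigr),\quad t\ge 0.$$
Each $f_t$ is a $\Delta$-tropical polynomial with the same quasidegree; for $t$ sufficiently large the interior monomials are inactive and $f_t$ agrees with $f_\infty:=\min_S(m_f(S)n(S)\cdot z+a_S)$. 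Since every $C(f_t)\subset\Delta$, taking $B=\Delta$ in Lemma~\ref{lemma_deform} yields $Area(C(f))=Area(C(f_\infty))$.

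Next I would analyze the structure of $C(f_\infty)$. Label the sides $S_1,\dots,S_n$ of $\Delta$ cyclically CCW with primitive tangents $u_{S_k}$ and inward primitive normals $n_{S_k}=Ru_{S_k}$ (with $R$ the $90^\circ$ CCW rotation). At a corner $v_k=S_{k-1}\cap S_k$, only the two boundary monomials for $S_{k-1}$ and $S_k$ vanish, since any non-adjacent boundary monomial is strictly positive at $v_k$; hence exactly one edge of $C(f_\infty)$ leaves $v_k$ into $\Delta^\circ$. Its weight $\alpha_k$ is the integer length of the Newton polygon edge from $m_f(S_{k-1})n_{S_{k-1}}$ to $m_f(S_k)n_{S_k}$, and its primitive direction $u_k^{\mathrm{ray}}$ (pointing away from $v_k$) is obtained by rotating that Newton edge back by $R^{-1}$, yielding
$$\alpha_k u_k^{\mathrm{ray}}=m_f(S_k)u_{S_k}-m_f(S_{k-1})u_{S_{k-1}}.$$
All remaining vertices of $C(f_\infty)$ lie in $\Delta^\circ$.

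Finally I would compute $Area(C(f_\infty))$ by grouping contributions vertex-by-vertex. Orienting each edge $e$ from $p$ to $q$ in the direction of its primitive vector $v_e$, the identity $Area(e)=w_e(q-p)\cdot v_e$ rearranges to
$$Area(C(f_\infty))=-\sum_V V\cdot\Bigl(\sum_{e\ni V}w_e u_e^V\Bigr),$$
where $u_e^V$ is the primitive direction of $e$ pointing away from $V$. At each interior vertex the inner sum vanishes by the balancing condition; at each corner $v_k$ only the single incident ray contributes, giving $\alpha_k u_k^{\mathrm{ray}}$. Substituting and telescoping the cyclic sum,
$$Area(C(f_\infty))=\sum_k m_f(S_k)(v_{k+1}-v_k)\cdot u_{S_k}=\sum_k m_f(S_k)Area(S_k),$$
the last equality because $v_{k+1}-v_k$ is parallel to $u_{S_k}$ with Euclidean length $|S_k|$. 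Combined with the first step this is the claim.

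The main obstacle is justifying the simple structure of $C(f_\infty)$ used in the middle step (one ray per corner, all other vertices interior, no edges lying along $\partial\Delta$). If the $a_S$ are non-generic --- for example when a third boundary monomial accidentally vanishes at some interior point, or when parallel sides force two Newton polygon edges to be collinear --- one perturbs the coefficients $a_S$ infinitesimally while keeping the quasidegree fixed, runs the computation on the perturbed polynomial, and invokes continuity of $Area$ under small coefficient changes (immediate from Lemma~\ref{lemma_ecloseseries} and the definition of $Area$).
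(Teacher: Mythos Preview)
Your argument is correct, but it takes a substantially more explicit route than the paper. The paper's proof is a single sentence: one deforms $C(f)$ all the way to the weighted boundary cycle $\sum_{e\in S(\Delta)} m_f(e)\cdot e$ and invokes Lemma~\ref{lemma_deform}. You instead stop the deformation at the intermediate curve $C(f_\infty)$ defined by the boundary monomials alone, and then compute $Area(C(f_\infty))$ by hand via the balancing condition and a telescoping sum over the corners.

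What your approach buys is rigor: the paper's limiting object $\partial\Delta$ (with multiplicities) is not the corner locus of any $\Delta$-tropical polynomial, and at each corner of $\Delta$ the balancing condition fails, so strictly speaking Lemma~\ref{lemma_deform} does not apply at the endpoint of the deformation. Your route avoids this by never leaving the class of honest $\Delta$-tropical polynomials and replacing the final degeneration with an explicit divergence-theorem style computation. What the paper's approach buys is brevity and conceptual transparency---once one accepts the degenerate limit, the identity is immediate.

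Two minor comments. First, your perturbation remark at the end is unnecessary: since each boundary monomial vanishes exactly on its own side and is strictly positive on the rest of $\partial\Delta$, every $m_f(S)n(S)$ is genuinely a vertex of the Newton polygon of $f_\infty$, and exactly one edge of $C(f_\infty)$ leaves each corner; the interior combinatorics is irrelevant to your computation because balancing kills those contributions regardless. Second, the sign convention $n_S = R\,u_S$ with $R$ the $+90^\circ$ rotation is consistent with your telescoping step provided the $u_{S_k}$ are taken in the CCW direction (from $v_k$ to $v_{k+1}$); it is worth stating this explicitly, since the opposite convention would flip the sign of $\alpha_k u_k^{\mathrm{ray}}$.
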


\begin{corollary}[cf. Theorem 3 in \cite{announce}]
\label{cor_sym}
If $\Delta$ is a compact $\QQ$-polygon and $P\subset \Delta^\circ$ is a finite collection of points, then the tropical curve $C(f_{\Delta,P})$ has the minimal tropical symplectic area among all $\Delta$-tropical curves passing through the configuration of points $P$.
\end{corollary}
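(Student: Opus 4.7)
Let $g$ be any $\Delta$-tropical series non-smooth at every point of $P$, so that $C(g)$ is an arbitrary competitor. Write $f:=f_{\Delta,P}$ for brevity. By Definition~\ref{def_fOmegaP} we have the pointwise inequality $g\geq f$ on $\Delta$, and both functions vanish on $\partial\Delta$ (for $f$ by Lemma~\ref{lemma_tropicalseries}). The core of the argument is to upgrade this pointwise inequality to a boundary inequality of quasi-degrees $m_g(S)\geq m_f(S)$ for every side $S$ of $\Delta$, and then to conclude via the area formula of Lemma~\ref{lemma_quasidegdefinesarea}.

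For the quasi-degree comparison, fix a side $S\in S(\Delta)$ and a compact collar $K\subset\Delta^\circ$ of its relative interior. By Lemma~\ref{lemma_estimate} only finitely many monomials of either series are active on $K$, so in a one-sided neighborhood of $S$ each of $g,f$ coincides with a single linear monomial. Since this monomial vanishes along $S$ and is non-negative on $\Delta$, by Definition~\ref{def_qdegree} it equals $m_g(S)(n(S)\cdot z-c_S)$ for $g$ and $m_f(S)(n(S)\cdot z-c_S)$ for $f$, where $n(S)$ is the inward primitive normal and $c_S$ is determined by $S$ alone. Letting $z$ approach the relative interior of $S$ along its inward normal, the ratios $g(z)/\mathrm{dist}(z,S)$ and $f(z)/\mathrm{dist}(z,S)$ converge to $|n(S)|m_g(S)$ and $|n(S)|m_f(S)$ respectively, and $g\geq f$ forces $m_g(S)\geq m_f(S)$.

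By Remark~\ref{rem_smallcanonicalGp}, $f=G_P 0_\Delta$ is itself a $\Delta$-tropical polynomial, so Lemma~\ref{lemma_quasidegdefinesarea} gives the exact identity $Area(C(f))=\sum_{S\in S(\Delta)} m_f(S)\,Area(S)$. It remains to verify the lower bound $Area(C(g))\geq \sum_{S}m_g(S)\,Area(S)$. If $g$ is also a tropical polynomial, this too is Lemma~\ref{lemma_quasidegdefinesarea} with equality; if $Area(C(g))=\infty$, the bound is trivial; and if $g$ is a genuine series of finite area, one applies Lemma~\ref{lemma_quasidegdefinesarea} to the restriction of $g$ to the compact sublevel set $\{g\geq\epsilon\}\subset\Delta^\circ$, which is a $\QQ$-polygon by the argument of Lemma~\ref{lemma_levelsets}, and lets $\epsilon\to 0$, using the deformation-invariance of Lemma~\ref{lemma_deform} to absorb into the boundary the contribution from the collar $\{g<\epsilon\}$. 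Putting everything together yields $Area(C(g))\geq\sum_S m_g(S)\,Area(S)\geq\sum_S m_f(S)\,Area(S)=Area(C(f))$, which is the corollary.

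The step I expect to require the most care is the last one: for a tropical series $g$ with infinitely many monomials but finite area, one must argue that the collar near $\partial\Delta$ contributes exactly $\sum_S m_g(S)\,Area(S)$ in the limit $\epsilon\to 0$, i.e.\ that no ``internal'' part of $C(g)$ escapes to the boundary to create extra area on one side of the inequality or the other. This is essentially an application of Lemma~\ref{lemma_estimate} to a thin strip near each side $S$, saying that only the single dominating boundary monomial $m_g(S)(n(S)\cdot z-c_S)$ and finitely many others can be active there, so that the collar part of $C(g)$ can be deformed onto $\partial\Delta$ with total weighted length $\sum_S m_g(S)\,Area(S)$ by Lemma~\ref{lemma_deform}.
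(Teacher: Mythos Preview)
Your argument is correct and follows the same route as the paper: derive $m_g(S)\geq m_f(S)$ from the pointwise inequality $g\geq f_{\Delta,P}$, then invoke Lemma~\ref{lemma_quasidegdefinesarea}. The paper's proof is a single sentence to this effect. Your version is more careful in one respect: Lemma~\ref{lemma_quasidegdefinesarea} is stated only for $\Delta$-tropical \emph{polynomials}, and you flag that a competitor $g$ might a priori be a genuine series, then sketch a level-set/collar argument to cover that case. The paper does not address this, so your write-up actually fills a small gap in the exposition; alternatively one can argue directly (using the finiteness of lattice points on the lower boundary of the Newton polygon between $(m_g(S_1),0)$ and $(0,m_g(S_2))$ at each corner) that on a compact $\QQ$-polygon every $\Delta$-tropical series has finite small canonical form, which makes the extra step unnecessary.
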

Indeed, the tropical symplectic area is determined by the quasidegree, and $f_{\Omega,P}$ has the minimal on each side of $\Delta$ degree among the $\Delta$-tropical series non-smooth at $P$.

We should mention that the tropical symplectic area of a tropical curve already appeared in physics under the name of {\it mass} of a web \cite{kol1998bps}, where a {\it web} is a direct analog of a tropical curve. Only for curiosity we present a part of the dictionary between tropical objects and the field theory.
We quote \cite{strings}: ``On the other hand, we already know
that shrinking an internal face of a string web corresponds to a bosonic zero mode ... so the mass of
the web is independent of this deformation.'' -- this reminds us the operator $G_\p$, shrinking a face. This mass is also presented as a trace of some operator (BPS-formula, {\it ibidem}), and the area of a face in $\Delta\setminus C(f)$ is interpreted there as the ``tension of a monopolic string''.

\section{Summary}
\label{sec_summary}
For easy reference we formulate here a theorem, which summarizes most things about coarsening that we need in \cite{us}.

\begin{theorem}
\label{lemma_super}
Choose $\e>0$. For a given $\QQ$-polygon $\Delta$ and a finite set $P\subset \Delta^\circ$ there exist a $\QQ$-polygon $\Delta'\subset\Delta$ and a $\Delta'$-tropical polynomial $g$ such that 
\begin{enumerate}[a)]
\item $g|_{\Delta'}<\e$, the curve $C(g)$ is smooth, and $G_P g=G_P 0_{\Delta'}$,
\item $G_P g$ is $\e$-close to $G_P 0_{\Delta}$. 

Using Proposition~\ref{prop_gpsconvergence} let us write $G=G_{{\bf q}_m}G_{{\bf q}_{m-1}}\dots G_{{\bf q}_1}g$ such that $G$ is $\e$-close to $G_P g$ and their quasi-degrees (Definition~\ref{def_qdegree}) coincide. 
\item Then, during the calculation of $G$ we never apply a wave operator for a face which has a common side with $\partial\Delta'$.

Note that in the product $G_{{\bf q}_m}G_{{\bf q}_{m-1}}\dots G_{{\bf q}_1}g$ each $G_{{\bf q}_k}$ is the application of $\Add_{i_k,j_k}^{e_{k}}$ for some $e_k>0$, i.e. we increase the coefficient in the monomial $i_kx+j_ky$ by $e_k$. So we have 

\begin{equation}
\laber{eq_Gp}
G_{{\bf q}_m}G_{{\bf q}_{m-1}}\dots G_{{\bf q}_1}g = \Add_{i_mj_m}^{e_{m}}\Add_{i_{m-1}j_{m-1}}^{e_{m-1}}\dots \Add_{i_1j_1}^{e_{1}}g.
\end{equation}
For a constant $M$ we
replace in  \eqref{eq_Gp} $$G_{{\bf q}_k}=\Add_{i_kj_k}^{e_k} \text{\ by\ } G_{{\bf q}_k}^\circ:=\Add_{i_kj_k}^{e_k-M\h} \text{\ for\ } k=1,\dots, m.$$ Denote $f_0=g, f_{k+1}=\Add_{i_kj_k}^{e_k-M \h}f_k =G_{{\bf q}_k}^\circ(f_k)$. 
\item Then there exists a constant $M$ such that for any $\h>0$ small enough all the tropical curves defined by $f_k,k=1,\dots, m$ are smooth or nodal (Definition~\ref{def_nodal}) on $\Delta$ as well as each tropical curve in the family during the application of $G_{{\bf q}_k}^\circ$ to $f_k$ (Remark~\ref{rem_smooth}); and
\item the tropical curve defined by $f_m$ is $\e$-close to the tropical curve defined by $G_{{\bf q}_m}G_{{\bf q}_{m-1}}\dots G_{{\bf q}_1}g$.
\end{enumerate}
\end{theorem}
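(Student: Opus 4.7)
The plan is to chain together four earlier results in a single construction: Proposition~\ref{prop_nice} gives us the right sub-polygon $\Delta'$, Theorem~\ref{th_verge} builds the smooth seed $g$ on it, Proposition~\ref{prop_gpsconvergence} produces the finite word $G_{\q_m}\dots G_{\q_1}$ approximating $G_P$, and Proposition~\ref{prop_smooth} handles the smoothness/nodality of the intermediate curves after the uniform decrement $M\h$. The whole argument amounts to matching quasi-degrees so that these four steps compose without interfering with each other.

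First, apply Proposition~\ref{prop_nice} to the operator $G=G_P$ (realized via a convergent product by Proposition~\ref{prop_gpsconvergence}) to produce a unimodular $\QQ$-polygon $\Delta'\subset\Delta$ on which $G_P 0_{\Delta'}$ is nice (Definition~\ref{def_nice}) and differs from $G_P 0_\Delta$ by less than $\e$; simultaneously $G_P 0_\Delta\le\e$ on $\Delta\setminus\Delta'$. Let $d=m_{G_P 0_{\Delta'}}$ be the resulting (nice) quasi-degree. Feed $d$ into Theorem~\ref{th_verge} with a sufficiently small tolerance to obtain a $\Delta'$-tropical polynomial $g$ with $m_g=d$, smooth corner locus $C(g)$, and $g<\e$ throughout $\Delta'$; the support of $C(g)$ is concentrated in a narrow collar of $\partial\Delta'$, in particular disjoint from $P$. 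Since $g\ge 0$ we have $V(\Delta',P,g)\subset V(\Delta',P,0_{\Delta'})$ so $G_P g\ge G_P 0_{\Delta'}$; and if the free parameter $\delta$ in the construction of $g$ is chosen small enough, then $g\le G_P 0_{\Delta'}$ pointwise (both functions have the same canonical boundary monomials, and $g$ only differs by a nonnegative constant cap $\e/2$), so $G_P 0_{\Delta'}\in V(\Delta',P,g)$ and $G_P g\le G_P 0_{\Delta'}$. Combined with Lemma~\ref{lem_distance} this gives (a) and (b).

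For (c), apply Proposition~\ref{prop_gpsconvergence} to obtain a finite word $\q_1,\dots,\q_m\in P$ such that $G:=G_{\q_m}\cdots G_{\q_1}g$ is $\e$-close to $G_P g$; by Lemma~\ref{lemma_ecloseseries} and Remark~\ref{rem_curvesnearlimit} we may moreover insist that the quasi-degree of $G$ equals that of $g$ (i.e.\ of $G_P 0_{\Delta'}$). Because each $G_{\q_k}=\Add^{e_k}_{i_k j_k}$ changes only a single non-boundary coefficient, and the total quasi-degree is preserved, no operator in the word contracts or modifies a face of $C(f_k)$ bearing a side shared with $\partial\Delta'$. For (d) and (e), invoke Proposition~\ref{prop_smooth} on the word $G_{\q_m}\cdots G_{\q_1}$ acting on the nice starting datum $g$: its hypotheses (nice $g$, smooth $C(g)$, matching quasi-degree) are precisely what we have set up, so it produces the constant $M$ and, for every sufficiently small $\h>0$, asserts that the curves $C(f_k)$ and each intermediate curve during $G^\circ_{\q_k}$ are smooth or nodal, and that $C(f_m)$ is $\e$-close to $C(G_{\q_m}\cdots G_{\q_1}g)$.

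The main obstacle is the bookkeeping of tolerances and the pointwise comparison $g\le G_P 0_{\Delta'}$ in the second step, since the two functions agree in quasi-degree and in boundary vanishing but are defined by genuinely different constructions; one must verify that the perturbation parameters $\delta$ entering Theorem~\ref{th_verge} can be chosen small enough that every coefficient of $g$ in the small canonical form is dominated by the corresponding coefficient of $G_P 0_{\Delta'}$, which uses the fact that the boundary monomials of $G_P 0_{\Delta'}$ are exactly the minimal ones compatible with the quasi-degree (Remark~\ref{rem_lomegaestimate}). A secondary nuisance is to reconcile the four tolerances: the $\e$ fed into Proposition~\ref{prop_nice}, the one controlling the collar thickness in Theorem~\ref{th_verge}, the stabilization $\e$ in Proposition~\ref{prop_gpsconvergence}, and the product $mM\h$ governing Proposition~\ref{prop_smooth}. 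Each is handled by shrinking the previous one; since $m$ becomes fixed only after the third step, the fourth tolerance $\h$ is chosen last, with $m M\h<\e$.
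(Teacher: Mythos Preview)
Your approach matches the paper's: it too derives a), b) from Theorem~\ref{th_verge}, deduces c) from the coincidence of quasi-degrees, and reads d), e) directly off Proposition~\ref{prop_smooth}. The paper's proof is a two-line summary and does not spell out the role of Proposition~\ref{prop_nice} in producing the unimodular $\Delta'$ with nice quasi-degree, nor the verification that $G_P g=G_P 0_{\Delta'}$; you correctly supply both, and your identification of the inequality $g\le G_P 0_{\Delta'}$ as the delicate step (controlled by the collar parameter $\delta$ in Theorem~\ref{th_verge}) is an accurate refinement of what the paper leaves implicit.
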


Theorem~\ref{th_verge} gives a), b). Then, c) follows from the fact that the quasi-degrees of  $G, G_P g$ coincide. The content of Proposition~\ref{prop_smooth} is d),e).

\bibliography{../../sand/sandbib}
%\bibliography{master}
\bibliographystyle{abbrv}

Nikita Kalinin, \email{nikaanspb\{at\}gmail.com}

Mikhail Shkolnikov, \email{mikhail.shkolnikov\{at\}gmail.com}

\end{document}